\documentclass{siamltex1213}
\bibliographystyle{unsrt}
\usepackage{amssymb,amsmath,amsfonts,amssymb}
\usepackage{graphics,graphicx,color}
\usepackage{hyperref}
\usepackage{caption}
\usepackage{subcaption}
\usepackage{bmpsize}

\usepackage{mathrsfs}                    
\renewcommand{\d}{\mathrm{d}}            
\newcommand{\D}{\mathrm{D}}

\newcommand\rvec{{\bf r}}

\newcommand\nvec{{\bf n}}

\newcommand\zhat{{\bf \hat z}}
\newcommand\xhat{{\bf \hat x}}
\newcommand\yhat{{\bf \hat y}}

\newcommand\Qvec{{\bf Q}}

\newcommand\eps{{\epsilon}}

\newcommand{\Rr}{{\mathbb R}} 

\newcommand{\qs}{q_{\mathrm{s},\lambda}}
\newcommand{\tildeqs}{\tilde{q}_{\mathrm{s},\lambda}}

\newcommand{\lambdac}{\lambda_{\mathrm{c}}}
\setlength{\parindent}{0pt}


\usepackage[utf8x]{inputenc}
\usepackage[english]{babel}

\newcommand{\Vvec}{\mathbf{V}}           
\newcommand{\Ivec}{\mathbf{I}}           
\DeclareMathOperator{\tr}{tr}            
\newcommand{\abs}[1]{\left| #1 \right|}  




\title{Order Reconstruction for Nematics on Squares and Regular Polygons: A Landau-de Gennes Study}
\author{Giacomo Canevari, Apala Majumdar \& Amy Spicer}

\begin{document}
\maketitle
\begin{abstract}
We construct an order reconstruction (OR)-type Landau-de Gennes critical point on a square domain of edge length $\lambda$, motivated by the well order reconstruction solution 
numerically reported in \cite{kraljmajumdar}. The OR critical point is distinguished by an uniaxial cross with negative scalar order parameter along the square diagonals.
The OR critical point is defined in terms of a saddle-type critical point of an associated scalar variational problem. The OR-type critical point is globally stable for small~$\lambda$ 
and undergoes a supercritical pitchfork bifurcation in the associated scalar variational setting. We consider generalizations of the OR-type critical point to a regular hexagon, 
accompanied by numerical estimates of stability criteria of such critical points on both a square and a hexagon in terms of material-dependent constants.
\end{abstract}

\section{Introduction}
\label{sec:intro}
Nematic liquid crystals (LCs) are anisotropic liquids or liquids with a degree of long-range orientational order \cite{dg,virga}. 
Nematics in confinement offer ample scope for pattern formation and scientists are keen to better understand and exploit pattern formation to design new LC-based devices with advanced optical,
mechanical and even rheological properties. This paper is motivated by the well order-reconstruction solution for square wells, numerically reported in \cite{kraljmajumdar},
and its potential generalizations to other symmetric geometries. 

\vspace{.1 cm}

Nematic-filled square or rectangular wells have been widely studied in the literature \cite{tsakonas, cleaver, luo2012, lewissoftmatter}.
In \cite{tsakonas}, the authors studied the planar bistable device comprising a periodic array of micron-scale shallow nematic-filled square or rectangular wells.
The well surfaces were treated to induce tangent or planar boundary conditions so that the well molecules in contact with these surfaces are constrained to be in the plane of the surfaces.
In the absence of any external fields, the authors observe at least two different static equilibria: the diagonal state for which the molecules roughly align along one of the square diagonals
and the rotated state for which the molecules roughly rotate by $\pi$ radians between a pair of opposite edges. 

\vspace{.1 cm}

In \cite{kraljmajumdar}, the authors numerically model this device within the Landau-de Gennes (LdG) theory for nematic LCs. The LdG theory describes the nematic state by a symmetric, traceless 
$3\times 3$ matrix --- the $\Qvec$-tensor order parameter that is described in Section~\ref{sec:prelim} below. In \cite{kraljmajumdar}, the authors study static equilibria in the LdG framework, 
on a square domain with tangent boundary conditions. For square dimensions much larger than a material and temperature-dependent length scale known as the biaxial correlation length,
the authors recover the familiar diagonal and rotated solutions. For squares with edge length comparable to the biaxial correlation length, the authors find a new well order reconstruction
solution (WORS) for which the LdG $\Qvec$-tensor has a constant set of eigenvectors, one of which is $\zhat$ --- the unit vector in the $z$-direction. The WORS has a ``uniaxial'' diagonal cross
along which the LdG $\Qvec$-tensor has two equal positive eigenvalues, surrounded by a ring of ``maximal biaxiality'' for which the LdG $\Qvec$-tensor has a zero eigenvalue, matched by Dirichlet
conditions on the square edges. The WORS is interesting because it is a two-dimensional example of an order reconstruction solution on a square i.e. the LdG $\Qvec$-tensor mediates between
the diagonal cross connecting the four vertices and the Dirichlet edge conditions without any distortion of the eigenframe but by sheer variations in the eigenvalues of the LdG $\Qvec$-tensor,
referred to as \emph{eigenvalue exchange} in the literature. From an applications point of view, it can potentially offer very different optical properties to the conventional diagonal
and rotated solutions should it be experimentally realized. Further, for squares with edge length less than a certain material-dependent and temperature-dependent critical length,
the WORS appears to be unique stable LdG equilibrium, as suggested by the numerics in \cite{kraljmajumdar}.

\vspace{.1 cm}

Order reconstruction (OR) solutions have a long history in the context of nematic LCs. They were reported in \cite{sluckin, penzenstadler, gartlandmkaddem} for nematic defect cores
where the defect core is surrounded by a torus of maximal biaxiality; the torus mediates between or connects the nematic state at the defect core and the nematic states inside the torus itself.
OR solutions were successfully studied for hybrid nematic cells, typically consisting of a layer of nematic material sandwiched between a pair of parallel plates, each of which has a preferred
boundary nematic orientation \cite{kelly, bisi2003, bisi2004}. In \cite{kelly}, the authors consider the case of orthogonal preferred boundary orientations. For small cell gaps, the authors find an 
OR solution with a constant eigenframe that connects the two conflicting boundary alignments through one-dimensional eigenvalue variations across the normal to the plates. 
Indeed, the OR solution is the only observable solution (and hence globally stable) for cell gaps smaller than a certain critical value. 
For larger cell gaps, the authors observe familiar twisted profiles for which the eigenvectors rotate continuously throughout the cell to match the boundary alignments. 
The authors numerically compute a bifurcation diagram and show that the OR solution undergoes a supercritical pitchfork bifurcation to the familiar twisted solutions at a critical cell gap. 
In \cite{lamy2014}, the author rigorously studies the hybrid cell in a one-dimensional variational setting, in the LdG framework, and rigorously proves the existence of an OR solution
and the supercritical pitchfork bifurcation as the cell gap increases, at least for a range of temperatures. In \cite{bisi2003}, the authors consider the hybrid cell problem for 
non-orthogonal preferred boundary alignments. Their findings are contrasting to those of \cite{kelly} in the sense that they find an unstable OR solution for cell gaps larger
than a critical value and the familiar twisted solutions are always preferred irrespective of cell gap.

We provide a semi-analytic description of the numerically discovered WORS in this paper, and study its stability properties as a function of square size, denoted by~$\lambda$. 
We work in the LdG framework and hence, study LdG $\Qvec$-tensors on a square with edge length~$\lambda$. We impose Dirichlet tangent conditions on the edges, 
consistent with the experiments reported in \cite{tsakonas}, but there is a natural mismatch at the vertices. We truncate the vertices and hence, study a Dirichlet boundary-value problem
on a truncated square, with four long edges that are common to the original square and four short edges that are straight lines connecting the long edges. We conjecture that the artificial
short edges do not change the qualitative conclusions of our work, as is corroborated by the numerics in Section~\ref{sec:numerics}.  

We work at a fixed temperature below the nematic supercooling temperature, largely for analytic convenience. The fixed temperature only depends on material-dependent constants
and will be physically relevant for certain classes of nematic materials (see Sections~\ref{sec:prelim} and \ref{sec:reduction} below). We look for LdG equilibria which have a constant
eigenframe with $\zhat$ as an eigenvector, with a uniaxial cross along the square diagonals as described above. We parameterize these critical points by three order parameters,
$q_1$, $q_2$ and $q_3$, and the Dirichlet conditions translate into Dirichlet conditions for these variables. At the fixed temperature under consideration, 
we can prove the existence of a class of LdG critical points with $q_2 =0$ and constant $q_3$, with simply one degree of freedom labelled by $q_1$, for all values of $\lambda$.
We think of $q_1$ as a measure of the in-plane alignment of the nematic molecules. These critical points have a constant eigenframe by construction and the uniaxial cross is equivalent
to $q_1=0$ along the square diagonals. Hence, our task is reduced to constructing LdG critical points for which $q_1=0$ along the square diagonals. We appeal to ideas from saddle solutions
for bistable Allen-Cahn equations in \cite{fife, schatzman} to interpret $q_1$ as a minimizer of a scalar variational problem on a quadrant of a truncated square with Dirichlet conditions.
We then define $q_1$ on the truncated square by an odd reflection of the quadrant solution across the diagonals, yielding a LdG critical point on the entire domain defined in terms
of a minimizer of a scalar problem on a quadrant of the square. This critical point has $q_1=0$ along the diagonals by construction, exists for all $\lambda$ and reproduces
the qualitative properties of the WORS. We refer to this LdG critical point as being the \emph{OR} or \emph{saddle-type} LdG critical point in the rest of the paper.

The OR LdG critical point is the unique critical point (and hence globally stable) for small $\lambda$, as can be demonstrated by a uniqueness argument used in \cite{lamy2014}.
Further, in Section~\ref{sec:ORinstability}, we prove that the OR LdG critical point, which can also be interpreted as a critical point of a scalar variational problem, 
undergoes a supercritical pitchfork bifurcation in the scalar setting as $\lambda$ increases. In other words, it is unstable for large $\lambda$ and hence, not observed 
for large micron-scale wells studied in \cite{tsakonas, luo2012}. This bifurcation result can be viewed as a non-trivial two-dimensional generalization of the one-dimensional bifurcation 
result for a hybrid cell in \cite{lamy2014}. The strategy is the same --- we appeal to Crandall-Rabinowitz theorem but we have partial differential equations and not ordinary differential equations, 
which introduce technical difficulties. Further, it is not a priori obvious that the pitchfork bifurcation will carry over to a two-dimensional problem. However, there are now detailed bifurcation
plots for solutions in the full LdG framework \cite{robinson}, which show that the supercritical pitchfork bifurcation can also be observed in the full LdG setting,
and not merely the  reduced scalar setting.

In Section~\ref{sec:numerics}, we study the gradient flow model for the LdG energy on a square with Dirichlet conditions and special OR-type initial conditions. 
As expected, the long-time dynamics converges to the WORS for small $\lambda$ and we numerically compute estimates for the critical $\lambda$. In \cite{kraljmajumdar}, 
the authors solve the LdG Euler-Lagrange equations with effectively constant initial conditions and hence, our numerical experiments are not identical to those in \cite{kraljmajumdar}. 
The critical $\lambda$ is proportional to the biaxial correlation length at our choice of the fixed temperature, as expected from the numerics in \cite{kraljmajumdar}. 
Further, we numerically reproduce the supercritical pitchfork bifurcation, as the WORS solution loses stability for larger values of $\lambda$. In Section~\ref{sec:hexagon}, 
we prove the existence of an OR-type solution on a regular hexagon of edge length $\lambda$, for all values of $\lambda$ at the fixed temperature. 
The method of proof is entirely different to that of a square; we appeal to Palais' principle of symmetric criticality \cite{palais, lamy2014}. 
We numerically solve the LdG gradient flow model on a regular hexagon and again find the OR-type solution, featured by a ring of maximal biaxiality centered at the centre of the hexagon,
for small $\lambda$. The critical stability criterion is again proportional to the biaxial correlation length. This suggests that OR-type solutions may be generic
for regular convex polygons with an even number of sides, raising many interesting questions about the interplay of geometry, symmetry, temperature and multiplicity of LdG equilibria.  

\section{Preliminaries}
\label{sec:prelim}

We model nematic profiles on two-dimensional prototype geometries within the Landau-de Gennes (LdG) theoretical framework.
The LdG theory is one of the most powerful continuum theories for nematic liquid crystals and describes the nematic state by a macroscopic order parameter 
--- the LdG $\mathbf{Q}$-tensor that is a macroscopic measure of material ansiotropy. 
The LdG $\Qvec$-tensor  is a symmetric traceless $3\times 3$ matrix i.e. 
$\Qvec \in S_0 := \left\{ \Qvec\in \mathbb{M}^{3\times 3}\colon Q_{ij} = Q_{ji}, \ Q_{ii} = 0 \right\}$.
A $\Qvec$-tensor is said to be (i) isotropic if~$\Qvec=0$, (ii) uniaxial if $\Qvec$
has a pair of degenerate non-zero eigenvalues and (iii) biaxial if~$\Qvec$ has three distinct eigenvalues~\cite{dg,newtonmottram}. A
uniaxial $\Qvec$-tensor can be written as $\Qvec_u = s \left(\nvec \otimes \nvec - \Ivec/3\right)$ with~$\Ivec$ the $3\times 3$ identity matrix,
$s\in\Rr$ and~$\nvec\in S^2$, a unit vector. The scalar, $s$, is an order parameter which measures the degree of orientational order.
The vector, $\nvec$, is referred to as the ``director'' and labels the single distinguished direction of uniaxial nematic alignment~\cite{virga,dg}. 


We work with a simple form of the LdG energy given by
\begin{equation} \label{eq:2} 
 I[\Qvec] :=  \int_{\Omega} \frac{L}{2} \left|\nabla\Qvec \right|^2 + f_B(\Qvec) \, \mathrm{d}A
\end{equation}

where $\Omega\subseteq\Rr^2$ is a two-dimensional domain,
\begin{equation} \label{eq:3}
 |\nabla \Qvec |^2 := \frac{\partial Q_{ij}}{\partial r_k}\frac{\partial Q_{ij}}{\partial r_k},
 \qquad f_B(\Qvec) := \frac{A}{2} \tr\Qvec^2 - \frac{B}{3} \tr\Qvec^3 + \frac{C}{4}\left(\tr\Qvec^2 \right)^2.
\end{equation}

The variable~$A = \alpha (T - T^*)$ is the re-scaled temperature, $\alpha$, $L$, $B$, $C>0$ are material-dependent constants
and~$T^*$ is the characteristic nematic supercooling temperature~\cite{dg,newtonmottram}.
Further~$\rvec:=(x, \, y)$, $\tr\Qvec^2 = Q_{ij}Q_{ij}$ and $\tr\Qvec^3 = Q_{ij} Q_{jk}Q_{ki}$ for $i$, $j$, $k = 1, \, 2, \, 3$.
It is well-known that all stationary points of the thermotropic
potential, $f_B$, are either uniaxial or isotropic~\cite{dg,newtonmottram,ejam2010}.
The re-scaled temperature~$A$ has three characteristic values: (i)~$A=0$, below which the isotropic phase $\Qvec=0$ loses stability, 
(ii) the nematic-isotropic transition temperature, $A={B^2}/{27 C}$, at which $f_B$ is minimized by the isotropic
phase and a continuum of uniaxial states with $s=s_+ ={B}/{3C}$ and
$\nvec$ arbitrary, and (iii) the nematic supercooling temperature, $A = {B^2}/{24 C}$, above which the ordered nematic equilibria do not exist.

We work with $A<0$ i.e. low temperatures and a large part of the paper focuses on a special temperature,
$A=-{B^2}/{3C}$. This temperature is not special except for analytical convenience, as will be exemplified in the following sections. 
Some of our results can be readily generalized to all temperatures, $A<0$. For a given $A<0$, let
$\mathscr{N} := \left\{ \Qvec \in S_0\colon \Qvec = s_+ \left(\nvec\otimes \nvec - \Ivec/3 \right) \right\}$ 
denote the set of minimizers of the bulk potential, $f_B$, with
\[
 s_+ := \frac{B + \sqrt{B^2 + 24|A| C}}{4C}
\]
and~$\nvec \in S^2$ arbitrary. In particular, this set is relevant to our choice of Dirichlet conditions for boundary-value problems.

We non-dimensionalize the system using a change of variables, $\bar{\rvec} = \rvec/ \lambda$,
where $\lambda$ is a characteristic length scale of the system.
The re-scaled LdG energy functional is then given by
\begin{equation} \label{eq:rescaled}
 \overline{I}[\Qvec] := \frac{I[\Qvec]}{L \lambda} = \int_{\overline{\Omega}}\frac{1}{2}\left| \overline{\nabla} \Qvec \right|^2 
 + \frac{\lambda^2}{L} f_B\left(\Qvec \right) \, \overline{\mathrm{d}A}.
\end{equation}
In~\eqref{eq:rescaled}, $\overline{\Omega}$ is the re-scaled domain, $\overline{\nabla}$ is the gradient with respect to
the re-scaled spatial coordinates and $\overline{\mathrm{d}A}$ is the re-scaled area element. The associated Euler-Lagrange equations are 
%
\begin{equation} \label{eq:6} 
 \bar{\Delta} \Qvec = \frac{\lambda^2}{L} \left\{ A\Qvec + B\left(\Qvec\Qvec - \frac{\Ivec}{3}|\Qvec|^2 \right)
 - C|\Qvec|^2 \Qvec \right\},
\end{equation}
where $(QQ)_{ik} = Q_{ij}Q_{jk}$ with $i$, $j$, $k=1, \, 2, \, 3$
and $\Ivec$ is the $3\times 3$ identity matrix. The system~\eqref{eq:6} comprises five coupled nonlinear
elliptic partial differential equations.
In what follows, we analytically and numerically study solutions of~\eqref{eq:6} on prototype two-dimensional geometries
and solution stability as a function of the length, $\lambda$, which is a measure of the size of the domain. 
We treat $A$, $B$, $C$, $L$ as fixed constants and vary $\lambda$; the analytical results are asymptotic in nature 
but are validated by numerical simulations for $\lambda \in (0.5\times 10^{-6}, \, 0.5\times 10^{-4})~m$.
The LdG theory is believed to be valid for such length scales and our analysis is hence, 
corroborated by numerical simulations for physically relevant length scales as stated above.
In what follows, we drop the \emph{bars} and all statements are to be understood in terms of the re-scaled variables.


\section{A Scalar Variational Problem for \texorpdfstring{$A=-{B^2}/{3C}$}{A = -B2/3C}}
\label{sec:reduction}

We take ~$\Omega\subseteq\Rr^2$ to be a truncated unit square, whose diagonals lie along the axes:
\begin{equation} \label{eq:d3}
 \Omega := \left\{(x, \, y)\in\Rr^2\colon |x| < 1 - \varepsilon, \ |y| < 1 - \varepsilon, \ |x+y| < 1, \ |x-y| < 1 \right\}.
\end{equation}
The boundary, $\partial\Omega$, consists of four ``long'' edges~$C_1, \, \ldots, \, C_4$,  parallel to the lines ~$y = x$ and~$y = -x$,
and four ``short'' edges~$S_1, \, \ldots, \, S_4$, of length~$2\varepsilon$, parallel to the $x$ and $y$-axes respectively.
The four long edges~$C_i$ are labeled counterclockwise and $C_1$ is the edge contained in the first quadrant, i.e.
\[
 C_1 := \left\{(x, \, y)\in\Rr^2\colon x + y = 1, \ \varepsilon \leq x \leq 1 - \varepsilon \right\}.
\]
The short edges~$S_i$ are introduced to remove the sharp square vertices. They are also labeled counterclockwise and
\[
 S_1 := \left\{(1 - \varepsilon, \, y)\in\Rr^2\colon |y|\leq \varepsilon\right\}.
\]

We work with Dirichlet conditions on $\partial \Omega$. Following the existing literature on planar multistable
nematic systems~\cite{tsakonas, luo2012, kraljmajumdar}, we impose \emph{tangent} uniaxial Dirichlet conditions on the long edges, $C_1, \, \ldots, \, C_4$.
These tangent conditions simply require the uniaxial director to be tangent to the long edges and we fix $\Qvec = \Qvec_{\mathrm{b}}$ on $C_1, \, \ldots, \, C_4$ where
\begin{equation} \label{eq:bc1}
 \Qvec_{\mathrm{b}}(\rvec) := \begin{cases} 
  s_+\left( \nvec_1 \otimes \nvec_1 - \dfrac{\Ivec}{3} \right)  & \textrm{for } \rvec \in C_1 \cup C_3 \\
  s_+\left( \nvec_2 \otimes \nvec_2 - \dfrac{\Ivec}{3} \right)  & \textrm{for } \rvec \in C_2 \cup C_4;
 \end{cases}
\end{equation}
and
\[
 \nvec_1 := \frac{1}{\sqrt{2}}\left(-1, \, 1\right), \qquad  \nvec_2 := \frac{1}{\sqrt{2}}\left(1, \, 1 \right).
\]
We note that $\Qvec_{\mathrm{b}}\in\mathscr{N}$ on $C_1, \ldots C_4$ and the choice of $\nvec_1$ and $\nvec_2$ is dictated by the tangent boundary condition.

We prescribe Dirichlet conditions on the short edges too; these conditions are somewhat artifical and used purely for mathematical convenience.
However, some of our analytical results also hold for Neumann conditions on the short edges and these free boundary conditions are physically relevant.
Further, the numerical simulations in Section~\ref{sec:numerics} only use the tangent Dirichlet conditions in~\eqref{eq:bc1}
and do not employ the artificial Dirichlet conditions on the short edges and yet, the numerical results are consistent with the analysis
of our Dirichlet boundary-value problem. We believe that our choice of the Dirichlet conditions on the short edges, although artifical,
provides a nice platform for mathematical analysis and the analysis provides useful insight into more realistic boundary-value problems too.

The Dirichlet condition on the short edges is defined in terms of a function
\begin{equation} \label{eq:g}
  g(s) := \frac{s_+}{2} \left(e^{-\mu\varepsilon}\frac{e^{\mu s} - e^{-\mu s}}{e^{\mu\varepsilon} - e^{-\mu\varepsilon}}
  - e^{-\mu s} + 1\right) \quad \textrm{for } 0 < s < \varepsilon; 
  \qquad \mu := \frac{\lambda B}{(CL)^{1/2}}
\end{equation}
and we take $g(s) = s_+/2$ for $s>\eps$ and $g(s ) = - g( - s)$ for $s<0$.
We fix $\Qvec = \Qvec_{\mathrm{b}}$ on $S_1, \, \ldots, \, S_4$ where
\begin{equation} \label{eq:bc2}
 \Qvec_{\mathrm{b}} := \begin{cases}
  g(y) \left(\nvec_1 \otimes \nvec_1 - \nvec_2\otimes \nvec_2 \right) - \dfrac{s_+}{6}\left(2 \hat{\mathbf{z}}\otimes\hat{\mathbf{z}}
  - \nvec_1\otimes \nvec_1 - \nvec_2\otimes \nvec_2 \right)  & \textrm{on  } S_1\cup S_3, \\
  g(x)\left(\nvec_1 \otimes \nvec_1 - \nvec_2\otimes \nvec_2 \right) - \dfrac{s_+}{6}\left(2 \hat{\mathbf{z}}\otimes\hat{\mathbf{z}}
  - \nvec_1\otimes \nvec_1 - \nvec_2\otimes \nvec_2 \right) & \textrm{on  } S_2\cup S_4.
 \end{cases}
\end{equation}
This Dirichlet condition is artificial for two reasons: (i) $\Qvec_{\mathrm{b}} \notin \mathscr{N}$ on $S_1, \, \ldots, \, S_4$
i.e. $\Qvec_{\mathrm{b}}$ is biaxial on these edges and (ii) $\Qvec_{\mathrm{b}}$ is not tangent on these edges.
However, these edges are short by construction and we conjecture that the qualitative solution trends are
not affected by the choice of~$\Qvec_{\mathrm{b}}$ on $S_1, \, \ldots, \, S_4$. Given the Dirichlet conditions~\eqref{eq:bc1} and~\eqref{eq:bc2},
we define our admissible space to be
\begin{equation} \label{eq:admissible}
 \mathscr{A} := \left\{ \Qvec \in W^{1,2}\left(\Omega, \, S_0 \right)\!\colon \Qvec = \Qvec_{\mathrm{b}}~\textrm{on} ~\partial \Omega \right\}.
\end{equation}
It is straightforward to prove the existence of a global minimizer of the re-scaled functional~\eqref{eq:rescaled} in the admissible space~$\mathscr{A}$,
for all $A<0$ and for all values of $\lambda>0$.
 
In~\cite{kraljmajumdar}, the authors numerically find the order reconstruction solution for nano-scale wells or equivalently, 
for small~$\lambda$ in our framework. Our work is motivated by an analytic characterization of the order reconstruction (OR) solution
reported in~\cite{kraljmajumdar}; the OR solution is a critical point of~\eqref{eq:rescaled} with two key properties:
(i) the corresponding $\Qvec$-tensor has a constant eigenframe i.e. three constant eigenvectors, one of which is~$\hat{\mathbf{z}}$ 
--- the unit vector in the $z$ direction, (ii) this critical point is distinguished by an uniaxial cross with negative scalar order parameter along the square diagonals.

In the spirit of the numerical results reported in \cite{kraljmajumdar}, we look for critical points of the re-scaled functional~\eqref{eq:rescaled} of the form
\begin{equation} \label{eq:d1}
 \begin{split}
  \Qvec(x, \, y) &= q_1(x, \, y) \left(\nvec_1 \otimes \nvec_1 - \nvec_2\otimes \nvec_2 \right) 
  + q_2(x, \, y) \left(\nvec_1 \otimes \nvec_2 + \nvec_1\otimes \nvec_2 \right) \\
  &\qquad\qquad + q_3(x, \, y) \left(2 \hat{\mathbf{z}}\otimes\hat{\mathbf{z}} - \nvec_1\otimes \nvec_1 - \nvec_2\otimes \nvec_2 \right)
 \end{split}
\end{equation}
subject to the boundary conditions
\begin{equation} \label{eq:d2}
 q_1(x, \, y) = q_{\mathrm{b}} (x, \, y) :=
 \begin{cases} 
  -{s_+}/{2}  & \textrm{on  } C_1\cup C_3 \\
  {s_+}/{2}   & \textrm{on  } C_2\cup C_4 \\
  g(y)        & \textrm{on  } S_1\cup S_3 \\
  g(x)        & \textrm{on  } S_2\cup S_4;
 \end{cases}
\end{equation}
$q_2 = 0$ and $q_3 = - {s_+}/{6}$ on~$\partial\Omega$.
Critical points of the form~\eqref{eq:d1} mimic the order reconstruction solution if $q_2 = 0$, 
which ensures a constant eigenframe with $\hat{\mathbf{z}}$ being an eigenvector, 
and~$q_1$ vanishes along the square diagonals (the coordinate axes in our setting), so that 
$\Qvec = q_3 (x, y) \left(3\hat{\mathbf{z}}\otimes\hat{\mathbf{z}} - {\Ivec}\right)$ on $x=0$ and $y=0$.
We first present an elementary result regarding the existence of such critical points. 

\begin{proposition} \label{prop:1}
 The LdG Euler-Lagrange equations~\eqref{eq:6} admit a solution of the \linebreak form~\eqref{eq:d1} on the truncated square,
 $\Omega$ defined in~\eqref{eq:d3} subject to the Dirichlet conditions~\eqref{eq:bc1} and~\eqref{eq:bc2},
 provided the functions $q_1$, $q_2$, $q_3$ satisfy the following system
 \begin{equation} \label{eq:d4}
  \begin{aligned}
   \Delta q_1 &= \frac{\lambda^2}{L}\left\{A q_1 + 2B q_1 q_3 + C\left( 2q_1^2 + 2q_2^2 + 6q_3^2 \right) q_1 \right\} \\
   \Delta q_2 &= \frac{\lambda^2}{L}\left\{A q_2 + 2B q_2 q_3 + C\left( 2q_1^2 + 2q_2^2 + 6q_3^2 \right) q_2 \right\} \\
   \Delta q_3 &= \frac{\lambda^2}{L}\left\{A q_3 + B \left( \frac{1}{3}\left( q_1^2 + q_2^2 \right) - q_3^2 \right) + C\left( 2q_1^2 + 2q_2^2 + 6q_3^2 \right) q_3\right\} 
  \end{aligned}
 \end{equation}
 and the boundary conditions in~\eqref{eq:d2}.
\end{proposition}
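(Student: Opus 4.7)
The strategy is direct substitution and projection: insert the ansatz~\eqref{eq:d1} into the tensorial Euler--Lagrange system~\eqref{eq:6}, show the resulting identity lies in a three-dimensional subspace of~$S_0$, and read off the three scalar equations~\eqref{eq:d4} as its coordinate components.

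To set things up I introduce the constant symmetric traceless matrices
\[
 E_1 := \nvec_1\otimes\nvec_1 - \nvec_2\otimes\nvec_2, \quad
 E_2 := \nvec_1\otimes\nvec_2 + \nvec_2\otimes\nvec_1, \quad
 E_3 := 2\hat{\mathbf z}\otimes\hat{\mathbf z} - \nvec_1\otimes\nvec_1 - \nvec_2\otimes\nvec_2,
\]
so that the ansatz reads $\Qvec = q_1 E_1 + q_2 E_2 + q_3 E_3$, and I complete $\{E_1,E_2,E_3\}$ to an orthogonal basis of~$S_0$ with $E_4 := \nvec_1\otimes\hat{\mathbf z} + \hat{\mathbf z}\otimes\nvec_1$ and $E_5 := \nvec_2\otimes\hat{\mathbf z} + \hat{\mathbf z}\otimes\nvec_2$; pairwise Frobenius orthogonality is immediate from $\{\nvec_1,\nvec_2,\hat{\mathbf z}\}$ being an orthonormal frame of~$\Rr^3$. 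Since the $E_i$ are spatially constant, the Laplacian already acts diagonally, $\Delta\Qvec = (\Delta q_1)E_1 + (\Delta q_2)E_2 + (\Delta q_3)E_3 \in \mathrm{span}(E_1,E_2,E_3)$.

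I would then compute the bulk nonlinearity on the right-hand side of~\eqref{eq:6}. Using $(\mathbf a\otimes\mathbf b)(\mathbf c\otimes\mathbf d) = (\mathbf b\cdot\mathbf c)\,\mathbf a\otimes\mathbf d$ to evaluate the symmetric products $E_i E_j + E_j E_i$ for $i,j\in\{1,2,3\}$ produces $|\Qvec|^2 = 2q_1^2 + 2q_2^2 + 6q_3^2$ together with an explicit formula for $\Qvec\Qvec - \tfrac{1}{3}|\Qvec|^2\Ivec$ as a linear combination of~$E_1$, $E_2$, $E_3$ alone, whose polynomial coefficients in $q_1, q_2, q_3$ match exactly those appearing in~\eqref{eq:d4}. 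Since $A\Qvec - C|\Qvec|^2\Qvec$ trivially lies in $\mathrm{span}(E_1,E_2,E_3)$, both sides of~\eqref{eq:6} live in this three-dimensional space, and equating coefficients of~$E_1$, $E_2$, $E_3$ reproduces the scalar system~\eqref{eq:d4}.

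Finally I would check the boundary data. A short computation gives the identity $s_+(\nvec_k\otimes\nvec_k - \Ivec/3) = \pm (s_+/2)\,E_1 - (s_+/6)\,E_3$ for $k\in\{1,2\}$, which translates~\eqref{eq:bc1} into $q_1 = \pm s_+/2$, $q_2 = 0$, $q_3 = -s_+/6$ on the long edges; the short-edge data~\eqref{eq:bc2} are by construction already in the form $g(\cdot)\,E_1 - (s_+/6)\,E_3$, yielding $q_1 = g(\cdot)$, $q_2 = 0$, $q_3 = -s_+/6$ on $S_1,\dots,S_4$, as in~\eqref{eq:d2}. The only genuinely substantive step of the whole argument is the algebraic verification that $\mathrm{span}(E_1,E_2,E_3)$ is invariant under the LdG nonlinearity --- equivalently, that $\Qvec\Qvec - \tfrac{1}{3}|\Qvec|^2\Ivec$ has no component along $E_4$ or~$E_5$; once this invariance of the reduced configuration space is established, everything that remains is coefficient-matching.
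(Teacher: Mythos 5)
Your proof is correct and takes a genuinely different route from the paper's. The paper's proof introduces the reduced energy functional $J[q_1,q_2,q_3]$ obtained by inserting the ansatz~\eqref{eq:d1} into the LdG energy~\eqref{eq:rescaled}, establishes existence of a global minimizer of~$J$ by the direct method, notes that the minimizer solves~\eqref{eq:d4}, and then asserts without further detail that substituting back into~\eqref{eq:6} gives an exact solution. You instead make that last step the whole argument: you exhibit an orthogonal basis $E_1,\dots,E_5$ of~$S_0$ adapted to the frame $\{\nvec_1,\nvec_2,\hat{\mathbf{z}}\}$, observe that $\Delta\Qvec$ automatically lies in $\mathrm{span}(E_1,E_2,E_3)$, and isolate the decisive algebraic fact, namely that the LdG nonlinearity maps $\mathrm{span}(E_1,E_2,E_3)$ (the $\Qvec$-tensors with $\hat{\mathbf{z}}$ as a fixed eigenvector) into itself. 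Equating coefficients along $E_1,E_2,E_3$ then yields~\eqref{eq:d4}, and the boundary check converts~\eqref{eq:bc1}--\eqref{eq:bc2} into~\eqref{eq:d2}. Your route proves precisely the conditional asserted by the proposition and makes transparent the structural reason why the reduction is consistent; the paper's route additionally produces an actual solution of~\eqref{eq:d4} (needed for subsequent lemmas but strictly more than ``provided\ldots'' requires) while folding the invariance computation into the phrase ``by direct substitution''. Both proofs rest on the same underlying algebra; you spell it out, the paper assumes it. One minor remark: your formula gives $q_1 = +s_+/2$ on~$C_1\cup C_3$ and $q_1 = -s_+/2$ on~$C_2\cup C_4$, which is the opposite pairing of signs to~\eqref{eq:d2} as printed; since your $\pm$ correctly tracks the sign of $\langle\Qvec_{\mathrm{b}}, E_1\rangle$, this appears to be a typographical inconsistency in the paper rather than a gap in your argument.
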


\begin{proof}
 Consider the energy functional $J[q_1, \, q_2, \, q_3]$ defined below:
 \begin{equation} \label{eq:d5}
  \begin{split}
   &J[q_1, q_2, q_3]: = \int_{\Omega} \left( |\nabla q_1|^2 + |\nabla q_2|^2 + 3 |\nabla q_3|^2 \right) \, \d A  \\
   &\quad + \int_{\Omega} \frac{\lambda^2}{L}\left( A \left( q_1^2 + q_2^2 + 3 q_3^2 \right) + C\left( q_1^2 + q_2^2 + 3 q_3^2 \right)^2 
   + 2 B q_3 \left( q_1^2 + q_2^2\right) - 2 B q_3^3 \right) \, \d A.
  \end{split}
 \end{equation}
 We can prove the existence of a global minimizer of the functional~$J$ in~\eqref{eq:d5}
 among the triplets $(q_1, \, q_2, \, q_3)\in W^{1,2}(\Omega, \, \Rr^3)$ satisfying the boundary conditions~\eqref{eq:d2}
 from the direct methods in the calculus of variations, since $J$ is both coercive and weakly lower semi-continuous~\cite{evans}.
 The system of elliptic partial differential equations in~\eqref{eq:d4} comprise the Euler-Lagrange equations associated with~$J$
 in~\eqref{eq:d5} and the globally minimizing $(q_1, \, q_2, \, q_3)$ are classical solutions of the system~\eqref{eq:d4}.
 Once we obtain the solutions of the system~\eqref{eq:d4}, we can construct the corresponding tensor in~\eqref{eq:d1}
 and check that it is an exact solution of the LdG Euler-Lagrange equations in~\eqref{eq:6} by direct substitution.
\end{proof}

We do not have results on the multiplicity of solutions of the system~\eqref{eq:d4} for arbitrary~$\lambda$
but it is straightforward to check that there is a branch of solutions, $(q_1, \, 0, \, q_3)$, of the system~\eqref{eq:d4},
for all~$\lambda>0$ and for all~$A <0$. This solution branch does have a constant eigenframe but we need stronger properties
to analyze the order reconstruction solution. We henceforth, restrict ourselves to a special temperature
\begin{equation} \label{eq:d6}
 A = -\frac{B^2}{3C}
\end{equation}
for which $s_+ = {B}/{C}$. This temperature is special because the system~\eqref{eq:d4} admits a branch of solutions,
$(q_1, \, q_2, \,  q_3) = (q(x, \, y), \, 0, \, -{B}/{6C})$ at this temperature, consistent with the Dirichlet conditions in~\eqref{eq:d2},
for all~$\lambda>0$. It is simpler to analyze a branch of solutions with one variable $q(x, \, y)$ than solution branches with multiple variables and hence,
we restrict ourselves to this temperature and this solution branch in the remainder of this paper.

\begin{proposition} \label{prop:2}
 For $A = - {B^2}/{3C}$ and for all $\lambda>0$, there exists a branch of solutions of the system~\eqref{eq:d4} given by
 \begin{equation} \label{eq:d7}
  (q_1, \, 0, \, q_3) = \left(q_{\mathrm{min}}(x,y), \, 0, \, -\frac{B}{6C} \right)
 \end{equation}
 consistent with the Dirichlet conditions in~\ref{eq:d2}.
 This branch is defined by a minimizer, $q_{\mathrm{min}}$, of the following energy:
 \begin{equation} \label{eq:d8}
  H[q]: = \int_{\Omega} |\nabla q|^2 + \frac{\lambda^2}{L}\left(C q^4 - \frac{B^2}{2C} q^2\right) \, \d A
 \end{equation}
 subject to~\eqref{eq:d2} and is hence, a classical solution of
 \begin{equation} \label{eq:d9}
  \Delta q =  \frac{\lambda^2}{L}\left(2C q^3 - \frac{B^2}{2C} q \right),
 \end{equation}
 which is precisely the first equation in~\eqref{eq:d4} with $q_2 = 0$ and $q_3=-{B}/{6C}$. 
 We have the bounds
 \begin{equation} \label{eq:d10}
 -\frac{B}{2C} \leq q_{\mathrm{min}} \leq \frac{B}{2C}.
 \end{equation}
\end{proposition}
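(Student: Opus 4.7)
The plan is to verify that the constant-$q_3$ ansatz decouples the system into a single scalar equation, then solve that scalar equation by the direct method and establish the $L^\infty$ bound by truncation. First I would substitute $q_2 \equiv 0$ and $q_3 \equiv -B/(6C)$ into~\eqref{eq:d4} at the special temperature $A = -B^2/(3C)$. The second equation then reads $0 = 0$ identically. The third equation has vanishing left-hand side (constant $q_3$), so I would check that its right-hand side also vanishes identically in $q_1$: the coefficient of $q_1^2$ is $B/3 + 2Cq_3 = B/3 - B/3 = 0$, and the $q_1$-independent terms $Aq_3 - Bq_3^2 + 6Cq_3^3$ combine to zero after direct numerical substitution. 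The first equation then reduces, after grouping the coefficients of $q_1$ and $q_1^3$, to the scalar equation~\eqref{eq:d9}. The boundary data~\eqref{eq:d2} is manifestly consistent with the ansatz since $s_+ = B/C$ at this temperature, so the problem is equivalent to finding a classical solution of~\eqref{eq:d9} with $q = q_{\mathrm b}$ on $\partial\Omega$.

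Next I would apply the direct method of the calculus of variations to the scalar functional $H$ in~\eqref{eq:d8}. A $W^{1,2}$-extension of $q_{\mathrm b}$ to $\Omega$ supplies a competitor, so the admissible class is non-empty; the quartic potential is bounded below, making $H$ bounded below; coercivity in $W^{1,2}$ follows from the quartic growth and the Poincaré inequality applied to the difference with a fixed extension; and weak lower semicontinuity holds via the convexity of $|\nabla q|^2$ together with the compact embedding $W^{1,2}(\Omega) \hookrightarrow L^4(\Omega)$ in two dimensions. A minimizer $q_{\min}$ therefore exists, and standard elliptic regularity upgrades it to a classical solution of~\eqref{eq:d9}.

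For the $L^\infty$ bound~\eqref{eq:d10}, the double-well potential $W(q) := Cq^4 - \tfrac{B^2}{2C}q^2$ is strictly increasing on $[B/(2C), \infty)$ and symmetric. The boundary data satisfies $|q_{\mathrm b}| \le s_+/2 = B/(2C)$: on the long edges this is immediate from~\eqref{eq:d2}, and on the short edges one verifies that the function $g$ defined in~\eqref{eq:g} is monotone with values in $[0, s_+/2]$ for $s \in [0, \varepsilon]$ (it is essentially a solution of a linear two-point problem interpolating between $0$ and $s_+/2$, extended to be odd). The truncation $\tilde q := \max(-B/(2C), \min(q_{\min}, B/(2C)))$ therefore lies in the admissible class, its Dirichlet energy is pointwise no larger than that of $q_{\min}$, and its potential energy is strictly smaller on $\{|q_{\min}| > B/(2C)\}$. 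Minimality forces this set to have measure zero, which gives~\eqref{eq:d10}.

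The main obstacle is really the algebraic reduction in the first step: only at $A = -B^2/(3C)$ does the constant-$q_3$ ansatz close the system, which is precisely why the temperature restriction is imposed. Everything afterwards is standard variational machinery, the one bookkeeping item being the verification that $|g| \le s_+/2$ on the short edges, without which the truncation argument would fail at the boundary.
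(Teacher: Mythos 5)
Your argument is correct, and the overall structure (verify that the constant-$q_3$ ansatz closes the system, then apply the direct method to the reduced scalar functional) matches the paper exactly. The one place where you diverge is in establishing the $L^\infty$ bound~\eqref{eq:d10}: the paper invokes the maximum principle on the Euler--Lagrange equation~\eqref{eq:d9}, using the fact that the nonlinearity $2Cq^3 - (B^2/2C)q$ has the same sign as $q$ whenever $|q| > B/(2C)$, whereas you use a truncation argument at the level of the minimizer itself. Both are valid and routine; your variational approach has the mild advantage of not requiring classical regularity before the bound is obtained (truncation works directly for the $W^{1,2}$-minimizer), though since you establish regularity anyway to pass from weak to classical solution, the saving is cosmetic. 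You are also right to flag the verification that $|g| \le s_+/2$ on the short edges as the one bookkeeping item the truncation argument hinges on; the paper records this property of $g$ explicitly in~\eqref{g2}, so your argument is consistent with the setup. Your explicit check that the third equation of~\eqref{eq:d4} closes under the ansatz (both the $q_1^2$ coefficient and the constant part vanish) is the right way to see why the temperature restriction $A = -B^2/(3C)$ is needed, and the paper elides this computation.
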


\begin{proof}
 We can check that the solution branch defined by~\eqref{eq:d7} is a solution of the system~\eqref{eq:d4} 
 at~$A = - {B^2}/{3C}$ for all $\lambda>0$, if the function~$q$ is a solution of the partial differential equation~\eqref{eq:d9}
 subject to the Dirichlet conditions~\eqref{eq:d2}.
 
 Let~$q_{\mathrm{min}}$ be a minimizer of the functional~$H$ defined in~\eqref{eq:d8}, in the admissible space,
 $\mathscr{A}_q := \left\{ q\in W^{1,2}(\Omega) \colon q \textrm{ satisfies \eqref{eq:d2} on } \partial \Omega \right\}$.
 The existence of a minimizer follows from the direct methods in the calculus of variations.
 Then~$q_{\mathrm{min}}$ is a classical solution of the associated Euler-Lagrange equation~\eqref{eq:d9} which ensures that the triplet 
 $(q_1, \, q_2, \, q_3) = (q_{\mathrm{min}}, \, 0, \, -{B}/{6C})$ is a solution of the system~\eqref{eq:d4} yielding a critical point of 
 the LdG Euler-Lagrange equations in~\eqref{eq:6} via the representation~\eqref{eq:d1}.
 The bounds~\eqref{eq:d10} are a straightforward consequence of the maximum principle and the Dirichlet conditions in~\eqref{eq:d2}.
\end{proof}

\begin{lemma} \label{lem:1}
 There exists a number~$\lambda_0 > 0$ such that, for any~$\lambda < \lambda_0$,
 the solution branch defined by $(q_1, \, q_2, \, q_3) = (q_{\mathrm{min}}, \,  0, \, -{B}/{6C})$  in Proposition~\eqref{prop:2}
 is the unique critical point of the LdG energy \eqref{eq:rescaled}. 
\end{lemma}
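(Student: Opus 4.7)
The plan is to compare an arbitrary critical point $\Qvec\in\mathscr{A}$ of~\eqref{eq:rescaled} with the solution $\Qvec^\star$ associated to the branch in Proposition~\ref{prop:2} (through the representation~\eqref{eq:d1}) and to show that, for $\lambda$ small, the only $\Pvec := \Qvec - \Qvec^\star \in W^{1,2}_0(\Omega,S_0)$ that solves the linearized-difference equation is $\Pvec \equiv 0$. This is a standard convexity/small-parameter uniqueness argument of the type used in~\cite{lamy2014}: for small $\lambda$ the Dirichlet form dominates the bulk nonlinearity, so the Poincaré inequality on $\Omega$ forces $\Pvec$ to vanish.

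The first technical step is a uniform $L^\infty$ bound on every critical point of~\eqref{eq:rescaled}. The boundary datum is bounded (one checks $|\Qvec_{\mathrm{b}}|^2 \leq 2 s_+^2/3$ on $\partial\Omega$ from~\eqref{eq:bc1}--\eqref{eq:bc2}), and applying the scalar maximum principle to $\varphi := |\Qvec|^2$, combined with the elementary inequality $|\tr\Qvec^3|\leq \tfrac{1}{\sqrt{6}}|\Qvec|^3$ used to control the $B$-term in~\eqref{eq:6}, yields a constant $M = M(A,B,C) > 0$ such that $|\Qvec|\leq M$ on $\overline{\Omega}$ for every critical point. (Any version of this bound available in the literature is fine; only its independence of $\lambda$ is important.) As a consequence, both $\Qvec$ and $\Qvec^\star$ take values in a bounded set on which the right-hand side of~\eqref{eq:6}, viewed as a map $S_0 \to S_0$, is Lipschitz with some constant $K = K(A,B,C,M)$.

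Subtracting the equations~\eqref{eq:6} for $\Qvec$ and $\Qvec^\star$, the difference $\Pvec$ satisfies
\begin{equation*}
 -\Delta \Pvec + \frac{\lambda^2}{L}\, \mathbf{G}(\Qvec,\Qvec^\star)\Pvec = 0 \quad \text{in } \Omega,
 \qquad \Pvec = 0 \ \text{on } \partial\Omega,
\end{equation*}
where $\mathbf{G}(\Qvec,\Qvec^\star)$ is the $S_0$-linear operator arising from the Lipschitz estimate above and satisfies $|\mathbf{G}(\Qvec,\Qvec^\star)\Pvec|\leq K|\Pvec|$. Testing this identity against $\Pvec$ and integrating by parts gives
\begin{equation*}
 \int_\Omega |\nabla \Pvec|^2 \, \d A \leq \frac{K\lambda^2}{L} \int_\Omega |\Pvec|^2 \, \d A.
\end{equation*}
Since $\Pvec \in W^{1,2}_0(\Omega,S_0)$, the Poincaré inequality on the bounded domain $\Omega$ yields a constant $C_P = C_P(\Omega)$ with $\int_\Omega |\Pvec|^2 \leq C_P \int_\Omega |\nabla \Pvec|^2$. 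Therefore
\begin{equation*}
 \left(1 - \frac{K C_P \lambda^2}{L}\right) \int_\Omega |\nabla \Pvec|^2 \, \d A \leq 0,
\end{equation*}
and choosing $\lambda_0 := \sqrt{L/(K C_P)}$ forces $\nabla \Pvec \equiv 0$, hence $\Pvec\equiv 0$ in view of the boundary condition. The main obstacle in carrying this out is the first step, namely securing the $\lambda$-independent $L^\infty$ bound on all critical points; the $\Qvec$-tensor is a matrix-valued unknown, so one has to verify that a scalar maximum principle can indeed be applied to $|\Qvec|^2$ despite the nonlinear coupling $\Qvec\Qvec - \frac{1}{3}|\Qvec|^2\Ivec$ in~\eqref{eq:6}. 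Once that bound is in place, the remaining argument is purely energetic and proceeds verbatim as above.
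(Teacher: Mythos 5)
Your proposal is correct and takes essentially the same route as the paper: the paper invokes Lemma~8.2 of Lamy (2014) for a general small-$\lambda$ uniqueness result, whose proof is precisely the Poincar\'e/Lipschitz comparison you carry out explicitly; both rest on the same $\lambda$-independent maximum-principle bound $|\Qvec|\leq M(A,B,C)$ for critical points, which the paper cites from [amaz, ejam2010] and you sketch directly. The paper phrases the key step as strict convexity of the energy on the bounded set, while you test the difference of the two Euler--Lagrange equations against $\Pvec=\Qvec-\Qvec^\star$; these are two renderings of the same estimate, so there is no substantive difference in approach.
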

\begin{proof}
 This is an immediate consequence of a general uniqueness result for critical points of the energy~\eqref{eq:rescaled} in Lemma~8.2 of \cite{lamy2014}.
 A critical point, $\Qvec_{\mathrm{c}}$, of the LdG Euler-Lagrange equations in~\eqref{eq:6} is bounded as an immediate consequence of the maximum principle
 (see \cite{amaz, ejam2010}) i.e. $\left| \Qvec_{\mathrm{c}} \right| \leq M (A, \, B, \, C)$ and the bound $M$ is independent of ${\lambda^2}/{L}$.
 The key step is to note that the LdG energy is strictly convex on the set 
 \[
  \left\{\Qvec \in W^{1,2}(\Omega, \,  S_0)\colon  \left| \Qvec \right| \leq M \right\}
 \]
 for sufficiently small~$\lambda$ i.e. for ${\lambda^2}/{L} < \lambda_1(\Omega, \, A, \, B, \, C)$
 where the constant $\lambda_1$ depends on the domain, temperature and material constants. 
 Hence, the LdG energy~\ref{eq:rescaled} has a unique critical point in this regime.
 
 Let $A = - {B^2}/{3C}$; then the triplet $(q_1, \, q_2, \, q_3) = (q_{\mathrm{min}}, 0, -{B}/{6C})$ 
 in Proposition~\ref{prop:2} defines a LdG critical point of the form~\eqref{eq:d1} for all~$\lambda>0$.
 From the strict convexity of the LdG energy on the set of bounded $\Qvec$-tensors for small~$\lambda$ and fixed $L > 0$,
 we deduce that this must be the unique LdG critical point, and hence the unique global LdG energy minimizer for sufficiently small~$\lambda$.
 This yields the desired conclusion.
\end{proof}  

\begin{lemma} \label{lem:2}
 The function $q_{\mathrm{min}}\colon\Omega\to\Rr$ defined in Proposition~\ref{prop:2} vanishes along the square diagonals defined by $x=0$ and $y=0$,
 provided that~$\lambda < \lambda_0$ given by Lemma~\ref{lem:1}.
\end{lemma}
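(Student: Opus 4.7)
The plan is to combine the uniqueness of the LdG critical point for small $\lambda$ (Lemma~\ref{lem:1}) with a reflection symmetry of the full boundary-value problem. For either reflection $\sigma(x,y)=(x,-y)$ or $\sigma(x,y)=(-x,y)$, the scalar equation~\eqref{eq:d9} is invariant under $q\mapsto -q\circ\sigma$: the Laplacian commutes with $\sigma$, and the nonlinearity $2Cq^{3}-(B^{2}/2C)q$ is odd in $q$. Lifted to the full $\Qvec$-tensor through the ansatz~\eqref{eq:d1}, this corresponds to the LdG covariance $\Qvec(x,y)\mapsto \mathbf{P}_{\sigma}\Qvec(\sigma(x,y))\mathbf{P}_{\sigma}$, with $\mathbf{P}_{\sigma}$ the diagonal reflection of $\Rr^{3}$ fixing $\hat{\mathbf{z}}$; the triplet then transforms as $(q_{1},q_{2},q_{3})\mapsto(-q_{1}\circ\sigma,\,q_{2}\circ\sigma,\,q_{3}\circ\sigma)$, preserving the reduced ansatz $(q_{\mathrm{min}},0,-B/6C)$ up to the sign flip on the first component.

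The next step is to verify that the Dirichlet data~\eqref{eq:bc1}--\eqref{eq:bc2} is invariant under this combined transformation. The long-edge check uses $\mathbf{P}_{\sigma}\nvec_{1}=-\nvec_{2}$ (and vice versa), so that $s_{+}(\nvec_{1}\otimes\nvec_{1}-\Ivec/3)$ is mapped to $s_{+}(\nvec_{2}\otimes\nvec_{2}-\Ivec/3)$, consistent with the exchange of the two edge pairs $C_{1}\cup C_{3}$ and $C_{2}\cup C_{4}$ under $\sigma$. On the short edges, the defining property $g(-s)=-g(s)$ combines with the sign flip that $\mathbf{P}_{\sigma}(\cdot)\mathbf{P}_{\sigma}$ produces on the anisotropic tensor $\nvec_{1}\otimes\nvec_{1}-\nvec_{2}\otimes\nvec_{2}$ in~\eqref{eq:bc2}; the isotropic-in-plane remainder is $\mathbf{P}_{\sigma}$-invariant by construction.

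Given the invariance, define $\hat{\Qvec}(x,y):=\mathbf{P}_{\sigma}\Qvec_{\mathrm{c}}(\sigma(x,y))\mathbf{P}_{\sigma}$, where $\Qvec_{\mathrm{c}}$ is the LdG critical point corresponding to $(q_{\mathrm{min}},0,-B/6C)$ from Proposition~\ref{prop:2}. Then $\hat{\Qvec}\in\mathscr{A}$ is again a critical point of~\eqref{eq:rescaled}, represented by $(-q_{\mathrm{min}}\circ\sigma,\,0,\,-B/6C)$ in the form~\eqref{eq:d1}. Lemma~\ref{lem:1} forces $\hat{\Qvec}=\Qvec_{\mathrm{c}}$ for $\lambda<\lambda_{0}$, and reading off the first component yields $q_{\mathrm{min}}(x,y)=-q_{\mathrm{min}}(\sigma(x,y))$. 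Taking $\sigma(x,y)=(x,-y)$ and specialising to $y=0$ gives $q_{\mathrm{min}}(x,0)\equiv 0$; the analogous choice $\sigma(x,y)=(-x,y)$ gives $q_{\mathrm{min}}(0,y)\equiv 0$.

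The main technical obstacle is the boundary-data invariance check. The long-edge verification and the self-mapped short edges (those sharing an axis with the reflection $\sigma$) are clean because $g$ is odd. The subtle case is the swapped short-edge pair—$S_{2}\leftrightarrow S_{4}$ for $\sigma(x,y)=(x,-y)$, or $S_{1}\leftrightarrow S_{3}$ for $\sigma(x,y)=(-x,y)$—where one has to track carefully how $\mathbf{P}_{\sigma}$ acts on the traceless part of $\Qvec_{\mathrm{b}}$ in~\eqref{eq:bc2} and check that the induced sign change on the $(\nvec_{1}\otimes\nvec_{1}-\nvec_{2}\otimes\nvec_{2})$ component is consistent with how $g$ is evaluated at the reflected argument. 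Once this bookkeeping is in place, the uniqueness statement in Lemma~\ref{lem:1} does the remaining work and the lemma is immediate.
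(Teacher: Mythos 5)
Your argument is correct and is essentially the same as the paper's: both observe that the scalar Allen--Cahn problem~\eqref{eq:d9}--\eqref{eq:d2} is invariant under the odd reflections $q\mapsto -q(-x,\,y)$ and $q\mapsto -q(x,\,-y)$, and then invoke the uniqueness from Lemma~\ref{lem:1} to force $q_{\mathrm{min}}$ to be odd across each diagonal and hence to vanish there. You spell out the lift to the full $\Qvec$-tensor level (the conjugation $\Qvec\mapsto\mathbf{P}_{\sigma}\Qvec(\sigma\,\cdot)\mathbf{P}_{\sigma}$ and the invariance of~\eqref{eq:bc1}--\eqref{eq:bc2}) more explicitly than the paper does, which is a legitimate and slightly more careful way to justify appealing to the LdG-level uniqueness in Lemma~\ref{lem:1}, but the substance of the proof is identical.
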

\begin{proof} 
 We make the elementary observation that if $q(x, \, y)$ is a solution of~\eqref{eq:d9} subject to~\eqref{eq:d2},
 then so are the functions $q(-x, \, -y)$, $- q(-x, \, y)$, $-q(x, \, -y)$. 
 We combine this symmetry result with the uniqueness result for small~$\lambda$ in Lemma~\ref{lem:1} above (also see~\cite{lamy2014})
 to get the desired conclusion in the $\lambda\to 0$ limit, for example, simply use $q(x, \, y) = - q(-x, \, y)$ with $x=0$
 to deduce that $q(0, \, y) = 0$ and we can use an analogous argument to show that $q=0$ along $y=0$.
\end{proof}

From Lemmas~\ref{lem:1} and~\ref{lem:2}, we deduce that there is a unique LdG critical point of the form
\begin{equation}\label{eq:s1}
 \Qvec_{\mathrm{min}}(x, \, y) = q_{\mathrm{min}}(x, \, y)\left(\nvec_1 \otimes \nvec_1 - \nvec_2\otimes \nvec_2 \right) 
 - \frac{B}{6C}\left(2 \hat{\mathbf{z}}\otimes\hat{\mathbf{z}} - \nvec_1\otimes \nvec_1 - \nvec_2\otimes \nvec_2 \right)
\end{equation}
for sufficiently small $\lambda$, where $q_{\mathrm{min}}$ is a global minimizer of the functional~$H$ in Proposition~\ref{prop:2} 
such that $q_{\mathrm{min}} = $ on $x=0$ and $y=0$. This critical point has a constant eigenframe and has a uniaxial cross of negative
scalar order parameter along the square diagonals (the coordinate axes) and hence, has all the qualitative properties of the order reconstruction
solution reported in~\cite{kraljmajumdar}. However, global minimizers of~$H$ need not satisfy the symmetry property,
$q_{\mathrm{min}}=0$ on the coordinate axes, for large~$\lambda$. 
This will be demonstrated by the following proposition, which characterize the asymptotic behaviour of~$q_{\min}$ as~$\lambda\to+\infty$.
We introduce the following notation: for any set~$E\subseteq\Rr^2$, we define the~$\Omega$-perimeter of~$E$ as
\[
 \mathrm{Per}_\Omega(E) := \sup\left\{\int_E \mathrm{div} \, \varphi \,\d A \colon
 \varphi\in C^1_{\mathrm{c}}(\Omega), \ |\varphi| \leq 1 \textrm{ on } \Omega\right\}.
\]
If~$E$ has a smooth boundary, then the Gauss-Green formula implies that~$\mathrm{Per}_\Omega(E) = \mathrm{length} \, (\partial E \cap\Omega)$.
We denote by~$\mathscr{B}$ the class of functions~$q$, defined on~$\Omega$, that only take the values~$B/2C$, $-B/2C$ and are such that
$\mathrm{Per}_\Omega\{q = -B/2C\} < +\infty$.
For any~$\lambda > 0$, we let~$q_{\min, \lambda}$ be a minimizer of~$H$.

\begin{proposition} \label{prop:3}
 There exists a subsequence~$\lambda_j\nearrow +\infty$ and a function~$q_\infty\in\mathscr{B}$ such that
 $q_{\min, \lambda_j}\to q_\infty$ in~$L^1(\Omega)$ and a.e. Moreover,~$q_\infty$ is a minimizer of the 
 functional~$J\colon L^1(\Omega)\to (-\infty, \, +\infty]$ given by
 \begin{equation} \label{J}
  J[q] := k \, \mathrm{Per}_\Omega \left\{ q = -\frac{B}{2C}\right\} 
  + \int_{\partial\Omega} \phi(q_{\mathrm{b}}(\rvec), \, q(\rvec)) \, \d \mathrm{s}
 \end{equation}
 if~$q\in\mathscr{B}$, and by~$J[q] := +\infty$ otherwise.
 Here~$q_{\mathrm{b}}$ is the boundary datum defined by~\eqref{eq:d2} and
 \begin{gather}
  \phi(s, \, t) := 2\sqrt{\frac{C}{L}} \abs{\int_s^t \left(\frac{B^2}{4C^2} - \tau^2\right) \, \d \tau} = 
  2\sqrt{\frac{C}{L}} \abs{\frac{1}{3}(s^3 - t^3) - \frac{B^2}{4C^2}(s - t)}, \label{J:phi} \\
  k := \phi\left(-\frac{B}{2C}, \, \frac{B}{2C}\right) = \frac{B^3}{3C^3} \sqrt{\frac{C}{L}}.\label{J:k}
 \end{gather}
 In Equation~\eqref{J}, the value~$q(\rvec)$ for~$\rvec\in\partial\Omega$ is understood as the inner trace of~$q$ at the point~$\rvec$.
\end{proposition}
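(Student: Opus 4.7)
The statement is a classical $\Gamma$-convergence result of Modica--Mortola / Owen--Sternberg type, where $\lambda\to\infty$ plays the role of the small parameter $\varepsilon\to 0$. First, I would absorb an irrelevant additive constant to rewrite $H$ in the form $\tilde H[q]=\int_\Omega |\nabla q|^2 + \frac{\lambda^2}{L}\tilde W(q)\,\d A$, where $\tilde W(q) := C(q^2 - B^2/(4C^2))^2 \geq 0$ vanishes exactly on the two wells $\pm B/(2C)$. Setting $\varepsilon := \sqrt{L}/\lambda$, the rescaled functional $\varepsilon\tilde H$ is precisely Modica--Mortola with double-well potential $\tilde W$, whose interior $\Gamma$-limit is, up to a multiplicative constant, the perimeter of $\{q=-B/(2C)\}$ with weight $k = 2\int_{-B/(2C)}^{B/(2C)}\sqrt{\tilde W(\tau)/L}\,\d\tau$; a short computation gives the formula~\eqref{J:k}.

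For \emph{compactness}, the plan is first to exhibit an explicit competitor: take $q=\pm B/(2C)$ on the two complementary half-diamonds bounded by a diagonal of $\Omega$, glued along that diagonal by a 1D optimal profile of thickness $O(1/\lambda)$, and corrected in an $O(1/\lambda)$ boundary layer near the short edges to match $q_{\mathrm b}$. A direct estimate yields $\tilde H[\bar q_\lambda]=O(\lambda)$, hence $\varepsilon\tilde H[q_{\min,\lambda}]$ is uniformly bounded. Combined with the $L^\infty$ bound~\eqref{eq:d10} from Proposition~\ref{prop:2}, Modica's compactness theorem supplies a subsequence $q_{\min,\lambda_j}$ converging in $L^1(\Omega)$ and a.e. to some $q_\infty\in\mathscr{B}$.

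For \emph{$\Gamma$-liminf}, the interior contribution follows from the pointwise Young inequality $\varepsilon|\nabla q|^2+\varepsilon^{-1}\tilde W(q)\geq 2|\nabla q|\sqrt{\tilde W(q)}=2|\nabla G(q)|$ with $G'=\sqrt{\tilde W}$, together with the coarea formula and BV lower semicontinuity. The boundary contribution is obtained by straightening $\partial\Omega$ locally and performing a 1D analysis in the inward normal variable: any approximate minimizer must interpolate between the Dirichlet datum $q_{\mathrm b}(\rvec)$ and the inner trace $q_\infty(\rvec)$ through a boundary layer of width $O(\varepsilon)$, which costs at least $\phi(q_{\mathrm b}(\rvec),q_\infty(\rvec))$ per unit arclength, giving the boundary integral in~\eqref{J}. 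For \emph{$\Gamma$-limsup}, given any $q\in\mathscr{B}$ one builds a recovery sequence $q_\lambda$ by inserting the optimal heteroclinic profile (solving $\eta''=\frac12\tilde W'(\eta)$ with $\eta(\pm\infty)=\pm B/(2C)$) in a tubular neighborhood of $\partial\{q=-B/(2C)\}$, together with 1D boundary-layer correctors of width $\varepsilon$ near $\partial\Omega$ matching $q_{\mathrm b}$ to the constant inner value; a direct computation shows $\varepsilon\tilde H[q_\lambda]\to J[q]$. Chaining the two inequalities with the compactness gives $J[q_\infty]\leq\liminf_j\varepsilon_j\tilde H[q_{\min,\lambda_j}]\leq J[q]$ for every $q\in\mathscr{B}$, so $q_\infty$ is a minimizer.

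The main obstacle is the boundary-layer analysis yielding the precise function $\phi$ in~\eqref{J:phi}. The long edges $C_i$ are relatively easy because $q_{\mathrm b}$ takes values in the wells, so either no boundary layer is needed (if the inner trace of $q_\infty$ agrees with $q_{\mathrm b}$) or a full transition of cost $k$ is paid. The short edges $S_i$ are more delicate: there $q_{\mathrm b}=g$ interpolates continuously between $\pm B/(2C)$, so one must show that no concentration occurs along $S_i$ in the limit and that the boundary contribution is correctly captured by $\int_{S_i}\phi(g,q_\infty)\,\d s$, which requires a careful interplay between the length scale of $g$ (essentially $1/\mu\sim 1/\lambda$, the same order as the interior transition) and the vanishing length $2\varepsilon$ of $S_i$; here the choice of $g$ in~\eqref{eq:g}, which is designed to be a boundary-layer profile, is precisely what makes the argument go through.
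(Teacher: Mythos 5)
Your approach is essentially the paper's: the paper does not prove Proposition~\ref{prop:3} from scratch but simply cites the Modica--Mortola / Sternberg $\Gamma$-convergence literature (specifically Braides' handbook, Theorem~7.10 for the boundary-trace version of the $\Gamma$-limit together with Theorem~7.3 or~7.11 for convergence of minimizers), and your sketch is a correct unpacking of precisely that argument --- including the right reduction to the shifted double-well $\tilde{W}(q)=C\bigl(q^2-B^2/(4C^2)\bigr)^2$, the scaling $\varepsilon=\sqrt{L}/\lambda$, and the computation of the surface tension $k$.

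Two small remarks. First, in your final paragraph you describe the short edges $S_i$ as having ``vanishing length $2\varepsilon$''; but $\varepsilon>0$ is a \emph{fixed} truncation parameter of the domain~\eqref{eq:d3} that does not tend to zero as $\lambda\to\infty$ --- the only small parameter in this asymptotics is $1/\lambda$, and $\varepsilon$ is taken small only in the separate argument below Proposition~\ref{prop:3} to obtain the contradiction~$J[q_\infty] > J[B/2C]$. Second, your observation that the boundary datum $g$ is itself $\lambda$-dependent, with a layer width of order $1/\mu\sim1/\lambda$ comparable to the interior transition width, is a genuine subtlety: the cited $\Gamma$-convergence theorems assume a fixed trace, so the paper's one-line citation quietly glosses over the need to also pass to the limit in the boundary datum on the short edges. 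Flagging this is perceptive, even if you do not resolve it fully.
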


The proof of this result follows along the lines of the analysis carried out by Modica-Mortola~\cite{ModicaMortola} and by Sternberg~\cite{Sternberg}.
In particular, Proposition~\ref{prop:3} is a direct consequence of~\cite[Theorem~7.10]{Braides}, combined with e.g. \cite[Theorem~7.3 or~7.11]{Braides}.


We comment on the implications of Proposition~\ref{prop:3}.
Suppose that, for any~$\lambda>0$, the minimizer~$q_{\min, \lambda}$ mimic the OR solution,
i.e.~$q_{\min, \lambda}(x, \, y) = 0$ on the coordinate axes~$x = 0$, $y=0$, $q_{\min, \lambda} > 0$ on the first an third quadrant,
and~$q_{\min, \lambda} < 0$ on the second and fourth quadrant.
Then, the limit function~$q_\infty$ given by Proposition~\ref{prop:3} would be
\[
 q_\infty(x, \, y) = \begin{cases}
                      B/2C  & \textrm{if } xy > 0 \\
                      -B/2C & \textrm{if } xy < 0,
                     \end{cases}
\]
with sharp transition localized on the coordinate axes~$x=0$ or~$y=0$, therefore
\begin{equation} \label{energy_q_infty}
 J[q_\infty] \geq k \mathrm{Per}_\Omega \left\{q_\infty = -\frac{B}{2C} \right\} = 4k (1 - \varepsilon).
\end{equation}
We consider now the constant function~$q = B/2C$, which has no transition layer in the interior of~$\Omega$
but do not match the Dirichlet boundary condition~\eqref{eq:d2}. Nevertheless, it is an admissible comparison function
for the minimization problem associated with the functional~\eqref{J}, for which no boundary condition is imposed. 
Then, using the symmetry of the problem, the boundary condition~\eqref{eq:d2} and~\eqref{J:phi}--\eqref{J:k}, we have
\begin{equation} \label{energy_constant}
 \begin{split}
  J[B/2C] &= \int_{\partial\Omega} \phi\left(-\frac{B}{2C}, \, \frac{B}{2C}\right) \, \d\mathrm{s} \\
  &= k \, \mathrm{Length}\,(C_2 \cup C_4) + 4 \int_{-\varepsilon}^\varepsilon \phi\left(g(s), \, \frac{B}{2C}\right) \,\d\mathrm{s} \\
  &\leq 2\sqrt{2}k(1 - \varepsilon) + 8k\varepsilon.
 \end{split}
\end{equation}
Now, if we take~$\varepsilon$ sufficiently small, Equations~\eqref{energy_q_infty} and~\eqref{energy_constant} 
imply that~$J[q_\infty] > J[B/2C]$, thus contradicting the minimality of~$q_\infty$ stated by Proposition~\ref{prop:3}.
We conclude that, for large~$\lambda$, the minimizers~$q_{\min, \lambda}$ of~$H$ do not vanish on the coordinate axes.
As a consequence, LdG critical points of the form~\eqref{eq:s1} mimic the OR solution for small~$\lambda$ but not for large~$\lambda$.

In the next sections, we address the following questions: (i) does the OR solution exist for all $\lambda$ and if so,
can we provide a semi-analytic description as in~\eqref{eq:s1} with a different interpretation of~$q$ as a critical point (not a minimizer)
of the functional~$H$ in Proposition~\ref{prop:2} and (ii) how does the stability of the OR solution depend on the square size denoted by~$\lambda$.

\section{The Order Reconstruction Solution}
\label{sec:OR}

This section is devoted to an analytic definition of the OR solution reported in~\cite{kraljmajumdar} and an analysis of its qualitative properties.
In light of the numerical results in~\cite{kraljmajumdar}, we construct OR critical points of the form~\eqref{eq:s1}, such that
$q=0$ on $x=0$ and $y=0$. This necessarily implies that the corresponding $\Qvec$-tensor is uniaxial with negative order parameter
(see~\eqref{eq:s1}) on the coordinate axes. We define the corresponding $q$'s in terms of a critical point, $q_{\mathrm{s}}$ of the functional~$H$ 
in Proposition~\ref{prop:2} and our definition of $q_{\mathrm{s}}$ is analogous to saddle solutions of the bistable Allen-Cahn equation studied in~\cite{fife, schatzman}. 

We consider the Euler-Lagrange equations associated with $H$ in Proposition~\ref{prop:2}
\begin{equation} \label{AC} \tag{AC}
 \begin{cases}
  -\Delta q + \dfrac{\lambda^2}{L} f(q) = 0 & \textrm{on } \Omega \\
  q = q_{\mathrm{b}} & \textrm{on } \partial \Omega.
 \end{cases}
\end{equation}
Here, $f$ is given by
\begin{equation} \label{f}
 f(q) := 2C q^3 - \frac{B^2}{2C} q,
\end{equation}
so the equation~\eqref{AC} is of the Allen-Cahn type and $q_{\mathrm{b}}$ is defined in \eqref{eq:d2}. 

For a fixed~$\lambda>0$, we define an \emph{order reconstruction (OR) solution}, or \emph{saddle solution} to be
a classical solution~$q_{\mathrm{s}}\in C^2(\Omega)\cap C(\overline{\Omega})$ of Problem~\eqref{AC} that satisfies the sign condition
\begin{equation} \label{sign_condition}
 x y \, q_{\mathrm{s}}(x, \, y)\geq 0 \qquad \textrm{for every } (x, \, y)\in \Omega,
\end{equation}
i.e.,~$q_{\mathrm{s}}$ is non-negative on the first and third quadrant and non-positive elsewhere.
In particular, $q_{\mathrm{s}}$ vanishes on the coordinate axes.
Firstly, we prove the existence and uniqueness of $q_{\mathrm{s}}$, for a fixed~$\lambda>0$.

\begin{lemma} \label{lemma:existence}
 For any~$\lambda > 0$, there exists an OR or saddle solution~$q_{\mathrm{s}}$ for Problem~\eqref{AC}.
 This solution satisfies~$-B/2C \leq q_{\mathrm{s}} \leq B/2C$.
\end{lemma}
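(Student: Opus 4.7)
The plan is to construct $q_{\mathrm{s}}$ by solving an auxiliary Dirichlet problem on one quadrant of $\Omega$ and extending the minimizer to the entire square by odd reflection across both coordinate axes, in the spirit of the Fife--Schatzman saddle-solution construction for the bistable Allen--Cahn equation. Two ingredients make this work: the oddness of the nonlinearity $f$ in \eqref{f}, and the fact that the boundary datum $q_{\mathrm{b}}$ in \eqref{eq:d2} transforms antisymmetrically under $(x,y)\mapsto(-x,y)$ and $(x,y)\mapsto(x,-y)$ (so that, in particular, it vanishes at the four points where the coordinate axes meet $\partial\Omega$, using $g(0)=0$).

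First I would fix the quadrant $\Omega^+ := \Omega\cap\{x>0,\,y>0\}$. Its boundary decomposes into a portion $\Gamma := \partial\Omega\cap\overline{\Omega^+}$ lying on $\partial\Omega$ (namely $C_1$ together with the non-negative halves of $S_1$ and $S_2$) and two coordinate-axis segments $\gamma_x,\,\gamma_y$. Define an admissible class $\mathscr{A}^+\subset W^{1,2}(\Omega^+)$ by imposing $q=q_{\mathrm{b}}$ on $\Gamma$ and $q=0$ on $\gamma_x\cup\gamma_y$; these data match at the four corners because $g(0)=0$. Minimize $H$ over $\mathscr{A}^+$ by the direct method: coercivity and weak lower semicontinuity of $H$ yield a minimizer $q^+$, and interior elliptic regularity upgrades $q^+$ to a classical $C^2$ solution of \eqref{AC} on $\Omega^+$.

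The bound $0\leq q^+\leq B/(2C)$ on $\Omega^+$ follows from truncation combined with the $q\mapsto|q|$ invariance of $H$. Since the integrand $Cq^4-\frac{B^2}{2C}q^2$ is strictly increasing in $|q|$ on $|q|\geq B/(2C)$, and since $|q_{\mathrm{b}}|\leq s_+/2 = B/(2C)$ (one checks directly that $|g|\leq s_+/2$), the truncation $\max(-B/(2C),\min(B/(2C),q^+))$ is an admissible competitor of no larger energy, forcing $|q^+|\leq B/(2C)$. A definite sign for $q^+$ is then enforced by noting that $H[|q^+|]=H[q^+]$ and that, with the appropriate sign convention for $q_{\mathrm{b}}$, the boundary data on $\partial\Omega^+$ is non-negative and vanishes on the axes; $|q^+|$ is thus another admissible minimizer, and the strong maximum principle rules out interior sign changes.

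Finally I would extend $q^+$ to all of $\Omega$ by the odd reflection
\[
 q_{\mathrm{s}}(x,y) := \mathrm{sgn}(x)\,\mathrm{sgn}(y)\,q^+(|x|,|y|),
\]
adjusted by an overall sign so that \eqref{sign_condition} holds. Because $q^+$ vanishes on the axis segments, $q_{\mathrm{s}}$ is continuous across the axes and belongs to $W^{1,2}(\Omega)$, and the antisymmetry of $q_{\mathrm{b}}$ ensures that the reflected data matches $q_{\mathrm{b}}$ throughout $\partial\Omega$. To verify that $q_{\mathrm{s}}$ weakly solves \eqref{AC} on $\Omega$, I would test against $\varphi\in C^\infty_c(\Omega)$, split the integral into the four quadrants, and integrate by parts: the axis boundary terms cancel pairwise because of the odd symmetry of $q_{\mathrm{s}}$ and the vanishing axis trace of $q^+$, while the bulk terms combine correctly by the oddness of $f$. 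Elliptic regularity then promotes $q_{\mathrm{s}}$ to a classical $C^2$ solution of \eqref{AC} on $\Omega$, and the bound $|q_{\mathrm{s}}|\leq B/(2C)$ together with \eqref{sign_condition} are both inherited from $q^+$. The principal technical step is this last weak-to-classical passage across the axes, which relies crucially on the vanishing axis trace of $q^+$ and on the invariance of the weak formulation of \eqref{AC} under $(x,y,q)\mapsto(-x,y,-q)$ and analogously in $y$.
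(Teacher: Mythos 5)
Your construction is essentially the same as the paper's: minimize the scalar energy $H$ on the truncated quadrant $Q = \Omega\cap\{x>0,\,y>0\}$ with $q=q_{\mathrm{b}}$ on $\partial Q\cap\partial\Omega$ and $q=0$ on the axis segments, use the $H[q]=H[|q|]$ invariance to enforce a sign, extend by odd reflection about both axes, and then upgrade the reflected function to a classical solution of \eqref{AC}. The truncation argument you give for $|q_{\mathrm{s}}|\leq B/2C$ is a valid alternative to the paper's appeal to the maximum principle.

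The one place where your argument is thinner than the paper's is the passage from ``weak solution on $\Omega\setminus\{0\}$'' to ``weak solution on $\Omega$''. Your integration-by-parts and cancellation argument works cleanly along the open axis segments, because there the quadrant minimizer $q^+$ is smooth up to the zero-Dirichlet boundary and the odd reflection produces a $C^1$ match of normal derivatives from adjacent quadrants. But at the origin, where all four quadrants meet, you are implicitly assuming that $\nabla q^+$ has a well-defined trace and that no boundary term survives, and this does not follow from the reflection symmetry alone. The paper handles this point explicitly: it notes that $|\nabla q_{\mathrm{s}}|$ is bounded for fixed $\lambda$ and then invokes the argument of Dang--Fife--Peletier (\cite[Theorem~3]{fife}) to conclude that the origin is a removable singularity for the weak formulation. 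You should either supply a gradient bound near the origin (e.g.\ via corner regularity for the zero-Dirichlet right-angle corner) or, as the paper does, localize the test function to an annulus around the origin and pass to the limit using the $W^{1,2}$ integrability of $\nabla q_{\mathrm{s}}$.

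Two minor remarks. First, since you already normalize $q^+\geq 0$ on $Q$, the odd reflection $q_{\mathrm{s}}(x,y)=\mathrm{sgn}(x)\,\mathrm{sgn}(y)\,q^+(|x|,|y|)$ automatically satisfies \eqref{sign_condition}, so no further ``overall sign adjustment'' is needed. Second, the boundary datum $q_{\mathrm{b}}$ in \eqref{eq:d2} is \emph{negative} on $C_1$ (the first-quadrant edge), so the quadrant problem should be posed with $|q_{\mathrm{b}}|$ on $\partial Q\cap\partial\Omega$ and the final $q_{\mathrm{s}}$ is recovered after an overall sign flip; this is a bookkeeping point (and the paper's own proof elides it in the same way), but it is worth being consistent about.
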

\begin{proof}
 Let~$Q$ be the truncated quadrant
 \begin{equation} \label{Q}
  Q := \bigg\{ (x, \, y)\in \Omega\colon x > 0, \ y > 0 \bigg\}.
 \end{equation}
 We 
 impose boundary conditions
 \begin{equation} \label{BC_Q}
  q = q_{\mathrm{b}} \quad \textrm{on } \partial Q \cap \partial \Omega, \qquad
  q = 0 \quad \textrm{on } \partial Q \setminus \partial{\Omega}.
 \end{equation}
 As the boundary datum is continuous and piecewise of class~$C^1$, there exist functions~$q\in W^{1,2}(Q)$ that satisfy~\eqref{BC_Q}.
 By standard arguments, we find a global minimizer, $q_{\mathrm{s}}\in W^{1,2}(Q)$, of~$H$ over~$Q$. 
 We note that $H[q_{\mathrm{s}}] = H[|q_{\mathrm{s}}|]$ and can, hence, assume that~$q_{\mathrm{s}}\geq 0$ a.e. on~$Q$.
 
 We define a function on~$\Omega$ by odd reflection of~$q_{\mathrm{s}}$ about the coordinate axes.
 The new function, still denoted by~$q_{\mathrm{s}}$, satisfies the sign condition~\eqref{sign_condition} and is a weak solution of~\eqref{AC}
 on~$\Omega\setminus\{0\}$.
 This function has bounded gradient for fixed $\lambda$ i.e.  $|\nabla q_{\mathrm{s}}|\leq C$ and we can then repeat the arguments in~\cite[Theorem~3]{fife} 
 to deduce that~$q_{\mathrm{s}}$ is a weak solution of~\eqref{AC} on~$\Omega$ (including the origin) for fixed~$\lambda$.
 By elliptic regularity on convex polygons (see e.g.~\cite[Chapter~3]{Grisvard}),
 we have that~$q_{\mathrm{s}}\in C^2(\Omega)\cap C(\overline{\Omega})$ is a classical solution of~\eqref{AC}.
 Finally, the bound~$-B/2C \leq q_{\mathrm{s}} \leq B/2C$ is a direct consequence of the maximum principle.
\end{proof}

\begin{lemma} \label{lemma:uniqueness}
 For all~$\lambda > 0$, there is at most one \emph{non-negative} solution~$q\in C^2(Q)\cap C(\overline{Q})$ to the problem
 \begin{equation} \label{AC_Q} \tag{AC\ensuremath{^\prime}}
  \begin{cases}
   -\Delta q + \dfrac{L}{\lambda^2} f(q) = 0 & \textrm{on Q} \\
   q = q_{\mathrm{b}} & \textrm{on }  Q \cap \partial \Omega \\
   q = 0 & \textrm{on } \partial Q \setminus\partial\Omega.
  \end{cases}
 \end{equation}
 There is a unique non-negative OR solution, $q_{\mathrm{s}}: \Omega \to \mathbb{R}$, defined in terms of $q$ above.
\end{lemma}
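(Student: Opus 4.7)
The plan is to implement a Brezis--Oswald style monotonicity argument, exploiting the fact that $q \mapsto f(q)/q = 2Cq^2 - B^2/(2C)$ is strictly increasing on $(0,+\infty)$. Suppose $q_1, q_2 \in C^2(Q)\cap C(\overline{Q})$ are two non-negative classical solutions of~\eqref{AC_Q}. I first observe that from the explicit form of $f$ and the a priori bound $|q_i|\leq B/(2C)$ of Lemma~\ref{lemma:existence}, one has $-\Delta q_i = \frac{\lambda^2}{L}\left(\frac{B^2}{2C}q_i - 2C q_i^3\right)\geq 0$ on $Q$; since $q_{\mathrm b}$ does not vanish identically on $\partial Q \cap \partial\Omega$, the strong maximum principle forces $q_1, q_2 > 0$ in the open set $Q$. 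I would then test the equation for $q_1$ against $(q_1^2 - q_2^2)/q_1$, the equation for $q_2$ against $(q_2^2 - q_1^2)/q_2$, integrate by parts over $Q$, and sum the two identities.

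After rearrangement, one arrives at the Picone--Brezis--Oswald identity
\[
 \int_Q \left(\left|\nabla q_1 - \frac{q_1}{q_2}\nabla q_2\right|^2 + \left|\nabla q_2 - \frac{q_2}{q_1}\nabla q_1\right|^2\right) \d A
 + \frac{2C\lambda^2}{L}\int_Q \left(q_1^2 - q_2^2\right)^2 \d A = 0,
\]
where the second integral comes from the algebraic fact $(f(q_1)/q_1 - f(q_2)/q_2)(q_1^2 - q_2^2) = 2C(q_1^2 - q_2^2)^2$. Both integrands are pointwise non-negative, so both must vanish, and the second immediately gives $q_1 \equiv q_2$ on $Q$. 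The boundary contributions generated by the integration by parts vanish since $q_1^2 - q_2^2 = 0$ on $\partial Q \cap \partial\Omega$ (both $q_i$ equal $q_{\mathrm b}$ there) and $q_i = 0$ on $\partial Q \setminus \partial\Omega$.

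The main technical obstacle is justifying this computation, since the test functions $(q_i^2 - q_j^2)/q_i$ are a priori singular wherever $q_i$ vanishes, in particular on the portion of $\partial Q$ contained in the coordinate axes and at the corner at the origin. My plan is to apply Hopf's lemma: each $q_i$ satisfies a linear equation $-\Delta q_i + V_i q_i = 0$ with $V_i \in L^\infty(Q)$, so on the smooth open portions of $\partial Q \setminus \partial\Omega$ the outward normal derivative of $q_i$ is strictly negative, $q_i$ vanishes at a linear rate, and the ratios $q_j/q_i$ stay bounded up to those segments. The non-smooth boundary points --- the corner at the origin and the two points on the coordinate axes where $q_{\mathrm b}$ jumps from $0$ to a positive value --- are handled by excising small neighbourhoods, performing the integration by parts on the remaining domain, and letting their radii shrink to zero; the uniform bound $|q_i|\leq B/(2C)$ together with $W^{1,p}(Q)$-regularity for some $p>2$ on the convex polygon $Q$ ensure that the excised contributions tend to zero. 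An entirely equivalent alternative is to replace $q_i$ in the denominators by $q_i + \eps$ and pass to the limit $\eps \to 0^+$ after integration by parts.

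Finally, the uniqueness statement for the OR solution $q_{\mathrm s}$ on $\Omega$ follows immediately: any OR solution restricts to a non-negative classical solution on $Q$ satisfying the boundary conditions~\eqref{BC_Q}, hence coincides with the unique $q$ just constructed; extending $q$ by odd reflection across the coordinate axes, as in Lemma~\ref{lemma:existence}, then produces $q_{\mathrm s}$ uniquely.
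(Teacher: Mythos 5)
Your proof is correct, but it takes a genuinely different route from the paper's. You implement a Brezis--Oswald/Picone argument: show $q_1, q_2 > 0$ in $Q$ via the strong maximum principle (since $-\Delta q_i \geq 0$ whenever $0 \leq q_i \leq B/2C$), test each equation against the Picone test functions $(q_i^2 - q_j^2)/q_i$, and exploit the strict monotonicity of $f(q)/q = 2Cq^2 - B^2/(2C)$ to force $q_1 \equiv q_2$ in a single step. The paper instead follows the argument of Dang--Fife--Peletier: it observes that $\max\{q_1, q_2\}$ is a weak subsolution and the constant $B/2C$ a supersolution, constructs an intermediate solution $p_2$ sandwiched between them by the classical sub-/supersolution method, and compares $q_1$ with $p_2$ through the Green identity $\tfrac{\lambda^2}{L}\int_Q \big(f(p_2)q_1 - f(q_1)p_2\big)\,\d A = \int_{\partial Q}\big(q_1\,\partial_\nvec p_2 - p_2\,\partial_\nvec q_1\big)\,\d s$; the algebraic identity $f(a)b - f(b)a = 2Cab(a^2 - b^2)$ --- a disguised form of the very same monotonicity of $f(q)/q$ --- together with the sign of both sides forces $q_1 = p_2 \geq \max\{q_1, q_2\}$, whence $q_1 \geq q_2$ and, by the symmetric argument, equality. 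Your route is more direct and avoids building $p_2$, at the cost of having to justify the singular Picone test functions near $\partial Q \setminus \partial\Omega$ (where both solutions vanish) and at the non-smooth boundary points; the paper's route uses only the bounded classical functions $q_1$ and $p_2$ as test functions, so no division is required, though it too needs enough boundary regularity to speak of normal derivatives on the polygonal boundary. Your proposed fixes (Hopf's lemma for linear decay on the smooth segments, excision or $(q_i + \varepsilon)$-regularization at the corners) are standard for Brezis--Oswald-type arguments and can be made rigorous, so there is no gap --- only different bookkeeping. Incidentally, the printed equation~\eqref{AC_Q} carries the factor $L/\lambda^2$, which appears to be a typographical slip for $\lambda^2/L$; you correctly used the latter, consistent with~\eqref{AC}.
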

\begin{proof}
 Our proof is analogous to~\cite[Lemma~1]{fife}.
 Consider two non-negative solutions~$q_1$, $q_2$ to~\eqref{AC_Q}.
 Then~$q := \max\{q_1, \, q_2\} $ is a weak subsolution to Problem~\eqref{AC_Q} (see e.g.~\cite{GilbargTrudinger}),
 i.e.~$q\in H^1(Q)$ and 
 \begin{gather*}
  \int_Q \left(\nabla q \cdot \nabla\varphi + \dfrac{\lambda^2}{L} f(q) \varphi \right) \,\d A \leq 0 
  \qquad \textrm{for any } \varphi\in H^1_0(Q) \textrm{ s.t. } \varphi\geq 0 \\
  q \leq q_{\mathrm{b}} \qquad \textrm{on } \partial Q\cap\partial\Omega, \qquad
  q \leq 0 \qquad \textrm{on } \partial Q\setminus\partial\Omega.
 \end{gather*}
 The maximum principle, applied to both~$q_1$ and~$q_2$, implies that~$q \leq B/2C$, 
 so the constant~$B/2C$ is a supersolution to~\eqref{AC_Q}.
 Therefore, by the classical sub- and supersolution method (see, e.g.,~\cite[Theorem~1 p.~508]{evans}), 
 there exists a solution~$p_2$ of~\eqref{AC_Q} such that~$q \leq p_2 \leq B/2C$, so that
 \begin{equation} \label{uniqueness1}
  0 \leq q_1 \leq p_2 \qquad \textrm{on } Q.
 \end{equation}
 We multiply the equation for~$p_2$ with~$q_1$, multiply the equation for~$q_1$ with~$p_2$, 
 integrate by parts and take the difference to obtain
 \[
  \dfrac{\lambda^2}{L} \int_Q \bigg( f(p_2)q_1 - f(q_1)p_2 \bigg) \, \d A = 
  \int_{\partial Q} \left(\frac{\partial p_2}{\partial\nvec}q_1 - \frac{\partial q_1}{\partial\nvec}p_2 \right) \, \d \mathrm{s},
 \]
 where~$\nvec$ is the outward normal to~$\partial Q$. Recalling the definition~\eqref{f} of~$f$ and the boundary conditions~\eqref{BC_Q}, we have
 \[
  \dfrac{2C\lambda^2}{L} \int_Q q_1 p_2 \left( p_2^2 - q_1^2 \right) \,\d A
  = \int_{\partial Q} q_{\mathrm{b}} \left(\frac{\partial p_2}{\partial\nvec} - \frac{\partial q_1}{\partial\nvec} \right) \,\d\mathrm{s}.
 \]
 The left-hand side is non-negative, because of~\eqref{uniqueness1}, while the right-hand side is non-positive
 (due to~\eqref{uniqueness1} and $p_2 = q_1$ on~$\partial Q$). Therefore, both sides of the equality must vanish and
 we deduce
 \[
  q_1 = p_2 \geq \max\{q_1, \, q_2\} \qquad \textrm{on } Q
 \]
 and, in particular, $q_1 \geq q_2$ on~$Q$. By a symmetric argument, we obtain~$q_1 \leq q_2$ on~$Q$ and the conclusion follows.
 
 We can repeat the arguments of Lemma~\ref{lemma:uniqueness} on the remaining three quadrants to deduce that the OR solution is unique on $\Omega$.
\end{proof}

The choice of the Dirichlet condition on the short edges (see~\eqref{eq:d2}) in terms of the function~$g$ defined in~\eqref{eq:g}
is motivated by the fact that~$g$ satisfies the inequality 
\begin{equation}\label{g2}
  - Lg^{\prime\prime} + \frac{\lambda^2B^2}{C}g - \frac{\lambda^2B^3}{2C^2} = 0, \quad g^\prime \geq 0 
  \quad \textrm{and} \quad 0 < g < \frac{B}{2C} \qquad \textrm{on } (0, \, \varepsilon),
\end{equation}
and hence,
\begin{equation} \label{g3}
 -\lambda^2 f(g) = \frac{\lambda^2 B^2}{2C} g \left(1 + \frac{2C}{B}g\right) \left(1 - \frac{2C}{B}g\right)
 \leq \frac{\lambda^2 B^3}{2 C^2} \left(1 - \frac{2C}{B}g\right) = - L g^{\prime\prime}.
\end{equation}
We can hence, use $g$ to construct supersolutions for the Problem~\eqref{AC_Q}, as demonstrated in the following lemmas.

\begin{lemma} \label{lemma:normal_derivative}
 For any~$\lambda >0$, we have
 \[
  \frac{\partial q_{\mathrm{s}}}{\partial\nvec} \geq 0 \qquad \textrm{on } \left\{(x, \, y)\in \partial\Omega \colon x > 0, \ y > 0 \right\},
 \]
 where~$\nvec$ is the outward-pointing normal to~$\Omega$. In fact, we have the strict inequality
 \begin{equation} \label{normal_derivative_strict}
  \frac{\partial q_{\mathrm{s}}}{\partial\nvec} > 0 \qquad \textrm{on }  
  \Gamma := \left\{(x, \, y)\in \partial\Omega \colon x > \varepsilon, \ y > \varepsilon \right\}.
 \end{equation}
\end{lemma}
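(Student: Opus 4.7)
The plan is to prove the weak inequality $\partial q_{\mathrm{s}}/\partial\mathbf{n}\ge 0$ on each of the three boundary pieces lying in the open first quadrant separately: the long edge~$C_1$ and the two short-edge halves $S_1\cap\{y>0\}$ and $S_2\cap\{x>0\}$. Uniqueness of the non-negative solution on the truncated quadrant~$Q$ (Lemma~\ref{lemma:uniqueness}), together with the $(x,y)\leftrightarrow(y,x)$ symmetry of~\eqref{AC} and the boundary data, forces $q_{\mathrm{s}}(x,y)=q_{\mathrm{s}}(y,x)$, so only $C_1$ and, say, $S_1\cap\{y>0\}$ need to be treated.

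On $C_1$ the Dirichlet condition pins $q_{\mathrm{s}}\equiv B/(2C)$, the maximum value allowed by Lemma~\ref{lemma:existence}. I would set $v:=B/(2C)-q_{\mathrm{s}}\ge 0$ and use the factorisation $f(q)=2Cq(q-B/(2C))(q+B/(2C))$ to recast~\eqref{AC} as the linear elliptic equation $-\Delta v+c(\rvec)\,v=0$ with
\[
  c(\rvec):=\frac{2C\lambda^{2}}{L}\,q_{\mathrm{s}}(\rvec)\Bigl(q_{\mathrm{s}}(\rvec)+\frac{B}{2C}\Bigr)\ge 0.
\]
Since $v$ is not identically zero (it equals $B/(2C)$ on the coordinate axes) and vanishes on $C_1$, the strong maximum principle yields $v>0$ in $\Omega$, and Hopf's lemma---applicable at interior points of the smooth segment $C_1$, which admits an interior ball---gives $\partial v/\partial\mathbf{n}<0$, i.e.\ the strict inequality $\partial q_{\mathrm{s}}/\partial\mathbf{n}>0$ on $C_1\cap\Gamma$.

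On $S_1\cap\{y>0\}$ I would use the one-dimensional barrier $\tilde q(x,y):=g(y)$ on the rectangle $D:=(0,1-\varepsilon)\times(0,\varepsilon)\subset Q$. By~\eqref{g2}--\eqref{g3}, $\tilde q$ is a supersolution of~\eqref{AC} on $D$, and the trace of $q_{\mathrm{s}}$ on $\partial D$ lies below $\tilde q$: equality holds on $\{x=1-\varepsilon\}$ by~\eqref{eq:d2}, both sides vanish on $\{y=0\}$, on $\{x=0\}$ we have $q_{\mathrm{s}}=0\le g(y)$, and on $\{y=\varepsilon\}$ we have $q_{\mathrm{s}}\le B/(2C)=g(\varepsilon)$ by Lemma~\ref{lemma:existence}. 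The classical sub-/supersolution method applied on $D$ (with $\underline q\equiv 0$, $\bar q:=\tilde q$, and the boundary trace $q_{\mathrm{s}}|_{\partial D}$ as Dirichlet datum) produces a solution $\hat q\in C^{2}(D)\cap C(\overline{D})$ of~\eqref{AC} with $0\le\hat q\le\tilde q$ and $\hat q=q_{\mathrm{s}}$ on $\partial D$. Both $\hat q$ and $q_{\mathrm{s}}|_D$ are then non-negative solutions of the same Dirichlet problem on $D$ with non-negative boundary data, so repeating the integration-by-parts argument of Lemma~\ref{lemma:uniqueness}---which uses only the algebraic identity $f(u)v-f(v)u=2Cuv(u^{2}-v^{2})$, the non-negativity of the boundary datum, and a sub-/supersolution sandwich---yields $\hat q=q_{\mathrm{s}}|_D$ and hence $q_{\mathrm{s}}\le g(y)$ in $D$. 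Because equality is attained along $\{x=1-\varepsilon\}$ and $\tilde q$ is independent of $x$, we conclude $\partial_{x}q_{\mathrm{s}}(1-\varepsilon,y)\ge\partial_{x}\tilde q(1-\varepsilon,y)=0$ for every $y\in(0,\varepsilon)$. The symmetric argument on $D':=(0,\varepsilon)\times(0,1-\varepsilon)$ handles $S_2\cap\{x>0\}$.

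The main obstacle is the comparison step promoting $q_{\mathrm{s}}\le\tilde q$ from $\partial D$ to the interior of $D$: the standard weak maximum principle does not apply directly, because $f$ is non-monotone on $[0,B/(2C)]$ and the zeroth-order coefficient of the linearised equation satisfied by $\tilde q-q_{\mathrm{s}}$ changes sign (it is negative where both $\tilde q$ and $q_{\mathrm{s}}$ are small, e.g.\ near the origin). Going through the sub-/supersolution construction and then identifying $\hat q$ with $q_{\mathrm{s}}|_D$ via the Lemma~\ref{lemma:uniqueness}-style uniqueness, instead of appealing to a direct maximum-principle comparison, is the key technical point that makes the argument work uniformly for all~$\lambda>0$.
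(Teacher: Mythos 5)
Your proposal is essentially correct and uses the same two underlying tools as the paper's proof --- a comparison with a barrier built from $g$, and the Hopf lemma --- but it arranges them differently, and there is one small imprecision to fix.

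The paper constructs a single barrier $p$ on all of~$Q$ (equal to $g(y)$ on the strip $\{y<\varepsilon\}$ and to $B/(2C)$ above it), verifies $q_{\mathrm{s}}\le p$ by the sub-/supersolution method plus the uniqueness of Lemma~\ref{lemma:uniqueness}, reads off $\partial q_{\mathrm{s}}/\partial\nvec\ge 0$ on $\Gamma_x\cup\Gamma_y$, and then separately applies Hopf's lemma to $w=B/(2C)-q_{\mathrm{s}}$ on~$Q$ to get the strict inequality on~$\Gamma$. You instead localize the barrier to the rectangle~$D$ (which incidentally sidesteps the weak-sense verification at the kink $\{y=\varepsilon\}$ that the paper's piecewise $p$ requires), treat the short edges by that comparison, and obtain the strict inequality on~$C_1$ directly by the strong maximum principle plus Hopf's lemma, noting that $\Gamma$ is exactly the relative interior of~$C_1$. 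The ``Lemma~\ref{lemma:uniqueness}-style'' identification of the sub-/supersolution output with $q_{\mathrm{s}}|_D$ is precisely the move the paper uses and is the right way around the non-monotonicity of~$f$. So the route is a reorganization of the same argument rather than a genuinely different one, and it is sound.

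The one point to tighten: you claim $c(\rvec)=\tfrac{2C\lambda^2}{L}q_{\mathrm{s}}(q_{\mathrm{s}}+B/(2C))\ge 0$ and then apply the strong maximum principle and Hopf's lemma on~$\Omega$. The inequality $c\ge 0$ holds only on~$Q$, where $q_{\mathrm{s}}\ge 0$; on the second and fourth quadrants $q_{\mathrm{s}}<0$ and $c<0$. Either carry out the argument on~$Q$ (where $v=B/(2C)-q_{\mathrm{s}}\ge 0$, $v\not\equiv 0$ since $v=B/(2C)$ on the axes, and $c\ge 0$), which is what the paper does, or if you insist on~$\Omega$, replace $c$ by any constant upper bound $M\ge c$ and use $-\Delta v+Mv=(M-c)v\ge 0$ so that the strong maximum principle and Hopf lemma for operators with non-negative zeroth-order coefficient still apply. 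Once this is made precise the proof is complete.
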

\begin{proof}
 We consider the function~$p\colon \overline{Q}\to\Rr$ defined by
 \begin{equation*}
  p(x, y):= \begin{cases}
             g(y) & \textrm{if } 0 \leq y < \varepsilon \\
             {B}/{2C} & \textrm{if } \varepsilon \leq y \leq 1 - \varepsilon.
            \end{cases}
 \end{equation*}
 It is straightforward to check that $p$ is a weak supersolution for Problem~\eqref{AC_Q}, from~\eqref{g2}.
 Moreover, $p\geq 0$ and the constant, $0$, is a subsolution for problem~\eqref{AC_Q}.
 The standard sub- and supersolution method, 
 combined with the uniqueness result in Lemma~\ref{lemma:uniqueness}, implies that ~$q_{\mathrm{s}} \leq p$ on~$Q$.
 Moreover, $q_{\mathrm{s}} =  p$ on
 \[
  \Gamma_x := \left\{ (x, \, y)\in\partial\Omega \colon x > \varepsilon, \ y > 0 \right\},
 \]
 and hence,
 \[
  \frac{\partial q_{\mathrm{s}}}{\partial\nvec} \geq \frac{\partial p}{\partial\nvec}  \qquad \textrm{on } \Gamma_x.
 \]
 Since~$\partial p/\partial\nvec = 0$ on~$\Gamma_x$, we conclude that~$\partial q_{\mathrm{s}}/\partial\nvec\geq 0$ on~$\Gamma_x$.
 A symmetric argument yields
 \[
  \frac{\partial q_{\mathrm{s}}}{\partial\nvec} \geq 0 \qquad \textrm{on }  
  \Gamma_y := \left\{(x, \, y)\in \partial\Omega \colon x > 0, \ y > \varepsilon \right\}.
 \]
 Finally, we notice that~$w:= B/2C - q_{\mathrm{s}}$ satisfies~$0 \leq w \leq B/2C$,
 \[
  - \Delta w + \frac{2C\lambda^2}{L} w \underbrace{\left(\frac{B}{2C} - w\right) \left(\frac{B}{C} - w\right)}
  _{\geq 0} = 0 \qquad \textrm{on } Q,
 \]
 and $w$ attains its minimum value~$w = 0$ at each point of~$\Gamma$.
 Then, the Hopf lemma (see e.g.~\cite[Lemma~3.4 p.~34]{GilbargTrudinger}) yields~$\partial w/\partial\nvec < 0$ on~$\Gamma$,
 whence~\eqref{normal_derivative_strict} follows. 
\end{proof}

Now, we use Lemma~\ref{lemma:normal_derivative} to show that~$q_{\mathrm{s}}$ can be extended to a weak supersolution~$p_{\mathrm{s}}$
to the Allen-Cahn equation~\eqref{AC}, defined on the infinite open quadrant~$K := (0, \, +\infty)^2$.
The function~$p_{\mathrm{s}}$ is constructed as follows.
We divide the quadrant~$K$ into four parts: $K_0 := Q$, $K_1$ and~$K_2$ are two semi-infinite strips
\[
 K_1 := [1 - \varepsilon, \, +\infty)\times[0, \, \varepsilon], \qquad K_2 := [0, \, \varepsilon]\times[1 - \varepsilon, \, +\infty)
\]
and~$K_3 := K \setminus (K_0 \cup K_1 \cup K_2)$ and we define~$p_{\mathrm{s}}\colon K\to\Rr$ by
\begin{equation} \label{supersolution}
 p_{\mathrm{s}}(x, \, y) := \begin{cases}
                  q_{\mathrm{s}}(x, \, y) & \textrm{on } K_0 = Q \\
                  g(y)         & \textrm{on } K_1 \\
                  g(x)         & \textrm{on } K_2 \\
                  {B}/{2C} & \textrm{on } K_3.
                 \end{cases}
\end{equation}

\begin{lemma} \label{lemma:supersolution}
 The function~$p_{\mathrm{s}}\in W^{1,2}_{\mathrm{loc}}(K)$ is a weak supersolution of the Allen-Cahn equation, that is, 
 for every non-negative function~$\varphi\in C^1_{\mathrm{c}}(K)$, we have
 \[
  \int_K \bigg\{  \nabla p_{\mathrm{s}} \cdot\nabla\varphi + \frac{\lambda^2}{L} f(p_{\mathrm{s}}) \varphi\bigg\} \, \d A \geq 0.
 \]
\end{lemma}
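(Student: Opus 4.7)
The plan is to exploit the four-piece partition $K = K_0 \cup K_1 \cup K_2 \cup K_3$ on which $p_{\mathrm{s}}$ is piecewise a classical (super)solution or a constant, integrate by parts on each piece, and check the resulting interface terms.

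First I would verify $p_{\mathrm{s}} \in W^{1,2}_{\mathrm{loc}}(K)$ by matching traces across each of the five interior interfaces. On $S_1$ and $S_2$, the boundary datum~\eqref{eq:d2} forces the trace of $q_{\mathrm{s}}$ from $K_0$ to equal $g$, matching the $K_1, K_2$ sides; on $C_1$, the same trace equals the constant $B/(2C)$ on $K_3$; and on the flat-top interfaces $\{y=\varepsilon,\ x\geq 1-\varepsilon\}$ and $\{x=\varepsilon,\ y\geq 1-\varepsilon\}$, the identity $g(\varepsilon) = B/(2C)$ read from~\eqref{eq:g} supplies the required matching.

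Next, for a non-negative test function $\varphi \in C^1_{\mathrm{c}}(K)$, I would split the integral over the four pieces and integrate by parts on each. The bulk contribution vanishes on $K_0$ because $q_{\mathrm{s}}$ classically solves~\eqref{AC} (Lemma~\ref{lemma:existence}) and on $K_3$ because the constant $B/(2C)$ kills both $\nabla p_{\mathrm{s}}$ and $f(p_{\mathrm{s}})$; on $K_1$ and $K_2$ it is non-negative thanks to~\eqref{g3}, which rephrased reads $-g'' + (\lambda^2/L)f(g) \geq 0$ on $(0,\varepsilon)$. Portions of $\partial K_i$ on $\partial K = \{x=0\}\cup\{y=0\}$ do not contribute since $\varphi$ has compact support in $K$, so what remains are five interior interface terms, each an integral of a jump in $\partial p_{\mathrm{s}}/\partial\nu$ times $\varphi$.

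On $S_1, S_2$, and $C_1$ the side occupied by $g$ (or the constant $B/(2C)$) has no variation transverse to the interface, so the jump reduces to $\partial q_{\mathrm{s}}/\partial\nu$, which is $\geq 0$ by Lemma~\ref{lemma:normal_derivative} and strictly positive on $\Gamma\subseteq C_1$ by~\eqref{normal_derivative_strict}. On the two interfaces at $y=\varepsilon$ and $x=\varepsilon$ the $K_3$-side contributes nothing and the jump equals $g'(\varepsilon^-)$, which a direct computation from~\eqref{eq:g} shows is strictly positive. Summing the non-negative bulk and interface contributions yields the claimed inequality. The main obstacle I anticipate is not analytic but orientation bookkeeping — getting the outward normals from the two sides of each of the five interfaces to align so that the jump combines with the sign supplied by Lemma~\ref{lemma:normal_derivative} or by $g$ — together with the conceptual point that it is precisely the positive kink of $g'$ at $y=\varepsilon$, rather than smoothness, that makes $p_{\mathrm{s}}$ a genuine supersolution rather than a solution.
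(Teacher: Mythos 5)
Your proof is correct and fills in exactly the details that the paper omits: the paper's own justification is the one-line remark that the result ``follows directly from Lemma~\ref{lemma:normal_derivative} and the assumptions~\eqref{g2}--\eqref{g3},'' and your piecewise integration by parts, with traces matching across the five interfaces and all bulk and interface contributions checked non-negative, is precisely that argument made explicit. One small point worth noting: your trace-matching step silently and correctly reads the Dirichlet datum on~$C_1$ as~$+s_+/2 = B/(2C)$ (needed for consistency with $q_{\mathrm{s}} \geq 0$ on~$Q$ and with the explicit value~$q_{\mathrm{s}}(1/2,\,1/2) = B/(2C)$ used later in the paper), even though~\eqref{eq:d2} as printed carries a minus sign there, which appears to be a typo.
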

This lemma follows directly from Lemma~\ref{lemma:normal_derivative} and the assumptions~\eqref{g2}--\eqref{g3} on the boundary datum~$g$.
The details of the proof are omitted, for the sake of brevity.

We conclude this section by studying the first derivatives of the OR solution, $q_{\mathrm{s}}$, on the quadrant $Q$;
by symmetry, this gives us information on the derivatives of $q_{\mathrm{s}}$ on the entire domain, $\Omega$.

\begin{lemma} \label{lemma:sign_derivative}
 For any~$\lambda > 0$, the OR solution is monotonically increasing in the $x$ and $y$ directions, on the quadrant $Q$:
 \[
  \frac{\partial q_{\mathrm{s}}}{\partial x} > 0, \quad \frac{\partial q_{\mathrm{s}}}{\partial y} > 0 \qquad \textrm{on } Q.
 \] 
\end{lemma}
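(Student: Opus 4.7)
The plan is to use the weak supersolution $p_{\mathrm{s}}$ from Lemma~\ref{lemma:supersolution} in a translation (sliding) comparison with $q_{\mathrm{s}}$ on $Q$, proving $\partial_x q_{\mathrm{s}} \geq 0$, and then to upgrade this to strict positivity via the strong maximum principle applied to the linearised equation satisfied by $\partial_x q_{\mathrm{s}}$. The companion claim $\partial_y q_{\mathrm{s}} > 0$ will follow from the $x\leftrightarrow y$ symmetry of Problem~\eqref{AC_Q}, which, combined with Lemma~\ref{lemma:uniqueness}, forces $q_{\mathrm{s}}(x, \, y) = q_{\mathrm{s}}(y, \, x)$ on $Q$.

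For $\tau > 0$, set $p_{\mathrm{s}}^\tau(x, \, y) := p_{\mathrm{s}}(x + \tau, \, y)$; by translation invariance of~\eqref{AC} and Lemma~\ref{lemma:supersolution}, $p_{\mathrm{s}}^\tau$ is a weak supersolution of the Allen--Cahn equation on the translated quadrant, which contains $Q$. A short case analysis on the five edges of $\partial Q$ shows that $p_{\mathrm{s}}^\tau \geq q_{\mathrm{s}}$ on $\partial Q$: on the two axis edges $q_{\mathrm{s}} = 0 \leq p_{\mathrm{s}}^\tau$; on the edges contained in $S_1$ and $C_1$ the shifted point lies respectively in $K_1$ and $K_3$, where $p_{\mathrm{s}}$ matches the Dirichlet datum $q_{\mathrm{b}}$ exactly; and on the edge in $S_2$ one uses the monotonicity of $g$ from~\eqref{g2} together with the bound $g \leq B/(2C)$. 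Then $(0, \, p_{\mathrm{s}}^\tau)$ is a compatible sub-/supersolution pair straddling the boundary datum, so the classical Perron construction yields a solution $u$ of Problem~\eqref{AC_Q} with $0 \leq u \leq p_{\mathrm{s}}^\tau$; Lemma~\ref{lemma:uniqueness} forces $u = q_{\mathrm{s}}$, hence $q_{\mathrm{s}} \leq p_{\mathrm{s}}^\tau$ on $Q$. For any interior point $(x_0, \, y_0) \in Q$ and $\tau > 0$ small enough that $(x_0 + \tau, \, y_0) \in Q$, this inequality reads $q_{\mathrm{s}}(x_0, \, y_0) \leq q_{\mathrm{s}}(x_0 + \tau, \, y_0)$; dividing by $\tau$ and letting $\tau \to 0^+$ yields $\partial_x q_{\mathrm{s}} \geq 0$ on $Q$.

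To upgrade to strict positivity, I observe that $u := \partial_x q_{\mathrm{s}}$ satisfies the linear equation $-\Delta u + (\lambda^2/L)\, f'(q_{\mathrm{s}})\, u = 0$ in $Q$ with bounded coefficient. Adding a large positive multiple of $u$ to both sides so that the resulting zero-order coefficient is non-negative, the strong maximum principle gives either $u \equiv 0$ or $u > 0$ in $Q$. The first alternative is excluded by applying Hopf's lemma to $q_{\mathrm{s}}$ at an interior point of the edge $\{0\}\times(0, \, 1-\varepsilon)$: rewriting~\eqref{AC} as the linear equation $-\Delta q_{\mathrm{s}} + (\lambda^2/L)(2C q_{\mathrm{s}}^2 - B^2/(2C))\, q_{\mathrm{s}} = 0$ with bounded coefficient and invoking the strong maximum principle (via the same constant-shift trick), one obtains $q_{\mathrm{s}} > 0$ in $Q$ since $q_{\mathrm{s}} \not\equiv 0$; Hopf's lemma then yields $\partial_x q_{\mathrm{s}} > 0$ on the interior of that edge, ruling out $u \equiv 0$. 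The main technical obstacle is the sub-/supersolution step, since the bistable nonlinearity $f$ is not monotone on $[0, \, B/(2C)]$ and a direct comparison principle via the maximum principle is unavailable; the Perron construction combined with the uniqueness statement of Lemma~\ref{lemma:uniqueness} sidesteps this difficulty cleanly.
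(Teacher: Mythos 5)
Your argument is correct and follows the same strategy as the paper: a sliding (translation) comparison against the extended supersolution $p_{\mathrm{s}}$ combined with the uniqueness result of Lemma~\ref{lemma:uniqueness} to obtain $\partial_x q_{\mathrm{s}} \geq 0$, followed by the strong maximum principle applied to the linearised equation for $u = \partial_x q_{\mathrm{s}}$ to upgrade to strict positivity. The paper translates the domain $Q$ and the solution $q_{\mathrm{s}}$ while keeping $p_{\mathrm{s}}$ fixed, whereas you translate $p_{\mathrm{s}}$ and compare on the fixed $Q$ --- these are the same comparison after a change of variables; your use of Hopf's lemma to rule out $u\equiv 0$ and the $x\leftrightarrow y$ symmetry argument for $\partial_y q_{\mathrm{s}}$ are minor, equally valid variants of the paper's more direct observations (that $u\equiv 0$ would force $q_{\mathrm{s}}$ to depend only on $y$, contradicting the boundary data, and that the $y$-derivative is handled by the same sliding argument in the $y$-direction).
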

\begin{proof}
 This argument is inspired by~\cite[Theorem~2]{fife}.
 We first claim that~$q_{\mathrm{s}}$ in non-decreasing in the~$x$-direction, that is,
 \begin{equation} \label{sign_derivative1}
  q_{\mathrm{s}}(x, \, y) \leq q_{\mathrm{s}}(x + \tau, \, y) \qquad \textrm{for any } \tau > 0, \
  (x, \, y)\in Q \textrm{ s.t. } (x + \tau, \, y)\in Q.
 \end{equation}
 Let~$Q_\tau$ be the translated domain~$Q_\tau := Q + (\tau, \, 0)$.
 We consider Problem~(AC$^\prime_\tau$), i.e. the analogue of Problem~\eqref{AC_Q} on the translated domain~$Q_\tau$.
 By translation invariance, the unique non-negative solution to Problem~(AC$^\prime_\tau$) is given by
 \begin{equation} \label{sign_derivative2}
  q_\tau(x + \tau, \, y) :=q_{\mathrm{s}}(x, \, y) \qquad \textrm{for any } (x, \, y)\in Q.
 \end{equation}
 Moreover, from Lemma~\ref{lemma:supersolution}, the function~$p_{\mathrm{s}}$ in~\eqref{supersolution}
 is a non-negative supersolution of~(AC$^\prime_\tau$).
 The sub- and supersolution method combined with the uniqueness of solutions for
 Problem~(AC$^\prime_\tau$) (Lemma~\ref{lemma:uniqueness}) implies that
 \[
  q_\tau(x + \tau, \, y)\leq p_{\mathrm{s}}(x + \tau, \, y) \qquad \textrm{for any }  (x, \, y)\in Q  \textrm{ s.t. }  (x, \, y)\in Q .
 \]
 Recalling~\eqref{sign_derivative2} and using that~$p_{\mathrm{s}} = q_{\mathrm{s}}$ on~$Q$, we conclude the proof of~\eqref{sign_derivative1}.
 
 Next, let us set~$u := \partial q_{\mathrm{s}}/\partial x$. By~\eqref{sign_derivative1}, we know that~$u \geq 0$ on~$Q$;
 we want to prove that the strict inequality holds. We differentiate Equation~\eqref{AC_Q}:
 \[
   \Delta u - \frac{\lambda^2}{L}f^\prime(q_{\mathrm{s}}) u = 0 \qquad \textrm{on } Q.
 \]
 By the strong maximum principle we deduce that either~$u\equiv 0$ in~$Q$ (that is, $q_{\mathrm{s}}$ only depends of~$y$) 
 or~$u>0$ in~$Q$. The first possibility is clearly inconsistent with the boundary conditions, therefore~$u$ must be strictly positive inside~$Q$.  
 A similar argument can be applied to the derivative with respect to~$y$.
\end{proof}

We combine the results from Sections~\ref{sec:reduction} and \ref{sec:OR} to state the following.

\begin{lemma}
\label{lem:ORdefn}
 We define an OR LdG critical point of the energy~\eqref{eq:rescaled} on~$\Omega$, at a fixed temperature $A=-{B^2}/{3C}$, 
 subject to the Dirichlet conditions~\eqref{eq:bc1} and~\eqref{eq:bc2} to be
 \begin{equation} \label{eq:ordefn}
  \Qvec_{\mathrm{s}}(x, \, y) := q_{\mathrm{s}}(x, y)\left(\nvec_1 \otimes \nvec_1 - \nvec_2\otimes \nvec_2 \right) 
  - \frac{B}{6C}\left( 2 \hat{\mathbf{z}}\otimes\hat{\mathbf{z}} - \nvec_1 \otimes \nvec_1 - \nvec_2\otimes \nvec_2\right)
 \end{equation}
 where $q_{\mathrm{s}}$ is the OR or saddle solution of Problem~\eqref{AC_Q}, defined in Lemma~\ref{lemma:existence} or equivalently,
 $q_{\mathrm{s}}$ is a critical point of the functional $H$ defined in Proposition~\ref{prop:2}. The critical point, $\Qvec_{\mathrm{s}}$,
 exists for all $\lambda$ and all $L$ and is the unique LdG critical point, and hence globally stable, for sufficiently small $\lambda$.
\end{lemma}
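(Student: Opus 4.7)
The strategy is to compile the results already established rather than prove anything fundamentally new: the lemma is essentially a synthesis of Propositions~\ref{prop:1}--\ref{prop:2}, Lemma~\ref{lemma:existence}, and Lemma~\ref{lem:1}. I would organize the argument in three steps.

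First I would verify that $\Qvec_{\mathrm{s}}$ defined in~\eqref{eq:ordefn} solves the LdG Euler-Lagrange system~\eqref{eq:6}. Applying Proposition~\ref{prop:1} to the triplet $(q_1, q_2, q_3) = (q_{\mathrm{s}}, 0, -B/(6C))$, it suffices to check that this triplet solves the coupled system~\eqref{eq:d4} at the temperature $A = -B^2/(3C)$. A direct substitution (identical to the computation behind Proposition~\ref{prop:2}) shows that the equation for $q_2$ is trivially satisfied, and that the coupling terms in the $q_3$ equation conspire to vanish for the constant choice $q_3 \equiv -B/(6C)$, leaving only the first equation of~\eqref{eq:d4} to reduce to the Allen-Cahn problem~\eqref{AC}, which is precisely what $q_{\mathrm{s}}$ solves by Lemma~\ref{lemma:existence}. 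The Dirichlet conditions in~\eqref{eq:d2} hold by the construction of $q_{\mathrm{s}}$ together with the constant values of $q_2$ and $q_3$, so Proposition~\ref{prop:1} delivers that $\Qvec_{\mathrm{s}}$ is a LdG critical point. Moreover, since~\eqref{AC} is exactly the Euler-Lagrange equation of the reduced energy $H$ of Proposition~\ref{prop:2}, the function $q_{\mathrm{s}}$ is automatically a critical point (albeit not necessarily a minimizer) of $H$ in the admissible class $\mathscr{A}_q$, giving the alternative characterization claimed in the statement.

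Existence for every $\lambda > 0$ and every $L > 0$ is then immediate, because Lemma~\ref{lemma:existence} produces a saddle solution $q_{\mathrm{s}}$ for each $\lambda > 0$; the parameter $L$ enters only through the combination $\lambda^2/L$ in~\eqref{AC} and poses no additional issue.

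Finally, for uniqueness and global stability at small $\lambda$, I would simply invoke Lemma~\ref{lem:1}. That lemma produces a threshold $\lambda_0 > 0$ below which the LdG energy~\eqref{eq:rescaled} admits a unique critical point. Since $\Qvec_{\mathrm{s}}$ is a LdG critical point for every $\lambda > 0$ by the preceding steps, it must coincide with that unique critical point whenever $\lambda < \lambda_0$, and is consequently the global minimizer of the energy there; in particular, $q_{\mathrm{s}}$ and the minimizer $q_{\min}$ of Proposition~\ref{prop:2} agree in this regime, consistently with Lemma~\ref{lem:2}. I do not anticipate any serious obstacle: the algebraic reduction of~\eqref{eq:d4} at $A = -B^2/(3C)$ with $q_3 = -B/(6C)$ is the only substantive computation, and it has already been performed within the proof of Proposition~\ref{prop:2}.
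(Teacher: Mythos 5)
Your proposal is correct and follows exactly the same route as the paper's own (very terse) proof: combining Propositions~\ref{prop:1}--\ref{prop:2} and Lemma~\ref{lemma:existence} for the construction and existence, and invoking Lemma~\ref{lem:1} for uniqueness and global stability at small $\lambda$. The extra details you supply (the algebraic reduction of~\eqref{eq:d4} at $q_3 = -B/(6C)$ and the identification of $q_{\mathrm{s}}$ as a critical point of $H$) are accurate and merely make explicit what the paper leaves implicit.
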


\textit{Comment on proof}.
 Lemma~\ref{lem:ORdefn} is immediate from Proposition~\ref{prop:1}, \ref{prop:2} and Lem\-ma~\ref{lemma:existence}.
 The uniqueness follows from Lemmas~\ref{lemma:existence} and~\ref{lem:1}.
\endproof

\section{Instability of the OR LdG critical point}
\label{sec:ORinstability}

This section is devoted to the stability of the OR LdG critical point in Lemma~\ref{lem:ORdefn}. 
We study the stability of~$\Qvec_{\mathrm{s}}$ in terms of the stability of $q_{\mathrm{s}}$ as a critical point 
of the functional~$H$ defined in Proposition~\ref{prop:2}.
In this section, when we need to stress the dependence~$\lambda$, we write~$\qs$ and~$H_\lambda$ instead of~$q_{\mathrm{s}}$,~$H$.
The final aim of this section is to prove
\begin{theorem} \label{th:bifurcation}
 There exists a unique value~$\lambda_{\mathrm{c}} > 0$ such that a pitchfork bifurcation 
 arises at~$(\lambda_{\mathrm{c}}, \, q_{\mathrm{s}, \lambda_{\mathrm{c}}})$. More precisely, there exist positive numbers~$\epsilon$, $\delta$
 and two smooth maps
 \[
  t\in(-\delta, \, \delta) \mapsto \lambda(t)\in(\lambda_{\mathrm{c}} - \epsilon, \, \lambda_{\mathrm{c}} + \epsilon), \qquad
  t\in(-\delta, \, \delta) \mapsto h_t\in H^1_0(\Omega)
 \]
 such that all the pairs~$(\lambda, \, q)\in\Rr^+\times H^1(\Omega)$ satisfying
 \[
  q \textrm{ is a solution to \eqref{AC}}, \qquad
  |\lambda - \lambda_{\mathrm{c}}| \leq \epsilon, \qquad
  \|q - q_{\mathrm{s}, \lambda_{\mathrm{c}}}\|_{H^1(\Omega)} \leq \epsilon
 \]
 are either
 \[
  (\lambda, \, q) = (\lambda, \, q_{\mathrm{s}, \lambda}) \qquad \textrm{or} \qquad \begin{cases}
                                                                 \lambda = \lambda(t) \\
                                                                 q = q_{\mathrm{s}, \lambda(t)} + t\eta_{\lambda_{\mathrm{c}}} + t^2 h_t.
                                                                \end{cases}
 \]
 Here~$\eta_{\lambda_{\mathrm{c}}}\in H^1_0(\Omega)$ is an eigenfunction corresponding to the loss of stability at~$\lambda_{\mathrm{c}}$,
 that is, $\eta_{\lambda_{\mathrm{c}}}\not\equiv 0$ is a solution of
 \[
   \Delta \eta_{\lambda_{\mathrm{c}}} = \frac{\lambda_{\mathrm{c}}^2}{L}\left(6C q_{\mathrm{s}, \lambda_{\mathrm{c}}}^2 
  - \frac{B^2}{2C}\right)\eta_{\lambda_{\mathrm{c}}} \qquad \textrm{on } \Omega.
 \]
 Moreover, there holds
 \[
  \lambda(-t) = \lambda(t),  \qquad h_{-t}(x, \, y) = h_t(-x, \, y) \quad \textrm{for a.e. }(x, \, y)\in\Omega. 
 \]
\end{theorem}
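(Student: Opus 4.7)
The plan is to apply the Crandall--Rabinowitz bifurcation theorem from a simple eigenvalue to the map $G(\lambda, u) := -\Delta(\qs + u) + (\lambda^2/L) f(\qs + u)$, viewed as a $C^2$ map from $(0, \infty) \times H^1_0(\Omega)$ into $H^{-1}(\Omega)$; the substitution $q = \qs + u$ turns the OR branch guaranteed by Lemma~\ref{lem:ORdefn} into the trivial branch $G(\lambda, 0)\equiv 0$. The Fréchet derivative along this branch is the self-adjoint Schr\"odinger-type operator $L_\lambda\eta := -\Delta\eta + (\lambda^2/L) f'(\qs)\eta$ with $f'(q) = 6Cq^2 - B^2/(2C)$. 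The generic transcritical output of Crandall--Rabinowitz will be promoted to the pitchfork asserted in the theorem by exploiting the $\mathbb{Z}_2$-equivariance of $G$ under the involution $\sigma\colon q(x,y)\mapsto -q(-x, y)$; this involution preserves equation~\eqref{AC}, fixes $\qs$ by the sign condition~\eqref{sign_condition}, and acts as $-\mathrm{Id}$ on any reflection-symmetric bifurcation mode.

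The first step is to locate $\lambda_{\mathrm{c}}$ via a spectral analysis of the family $\{L_\lambda\}$. Let $\mu_1(\lambda)$ be the principal Dirichlet eigenvalue of $L_\lambda$, simple with a strictly positive normalised eigenfunction $\eta_\lambda$ by Krein--Rutman, and continuous in $\lambda$ through continuity of $\lambda\mapsto \qs$ (from Lemma~\ref{lemma:uniqueness} combined with standard elliptic regularity). For small $\lambda$, $\mu_1(\lambda) > 0$ by Lemma~\ref{lem:1} (equivalently, by the Poincar\'e inequality and the lower bound $f'(\qs)\geq -B^2/(2C)$). For large $\lambda$, I would prove $\mu_1(\lambda)<0$ in two parts: (a) Proposition~\ref{prop:3} and the discussion following it show that $\qs$ is strictly not a minimiser of $H$ for small $\varepsilon$; (b) this non-minimality is promoted to strict spectral instability by constructing a test function $\phi \in H^1_0(\Omega)$ with $\langle L_\lambda\phi, \phi\rangle_{L^2} < 0$, modelled on the derivative of the one-dimensional Allen--Cahn heteroclinic between $\pm B/(2C)$ concentrated in the $O(\sqrt{L}/\lambda)$-thin transition layer of $\qs$ along a coordinate axis, suitably modulated in the transverse direction to exploit the boundary-pinned (non-Goldstone) nature of the transition. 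Continuity of $\mu_1$ then yields a crossing $\mu_1(\lambda_{\mathrm{c}})=0$.

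The kernel of $L_{\lambda_{\mathrm{c}}}$ is one-dimensional, spanned by $\eta_{\lambda_{\mathrm{c}}}$, by Krein--Rutman. The Crandall--Rabinowitz transversality condition $\langle \partial_\lambda L_\lambda \eta_{\lambda_{\mathrm{c}}}, \eta_{\lambda_{\mathrm{c}}}\rangle_{L^2}\neq 0$ will be verified by expanding $\partial_\lambda L_\lambda = (2\lambda/L) f'(\qs) + (\lambda^2/L) f''(\qs)\partial_\lambda\qs$ and using the eigenvalue identity $(\lambda_{\mathrm{c}}^2/L)\int f'(\qs)\eta_{\lambda_{\mathrm{c}}}^2 = -\int|\nabla\eta_{\lambda_{\mathrm{c}}}|^2$; the first term then contributes the strictly negative quantity $-(2/\lambda_{\mathrm{c}})\int|\nabla \eta_{\lambda_{\mathrm{c}}}|^2$, while the correction from $\partial_\lambda\qs$ (obtained by solving $L_\lambda\partial_\lambda\qs = -(2\lambda/L) f(\qs)$ with boundary data $\partial_\lambda g$ on the short edges) is shown to be lower order by elliptic estimates. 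This forces $\mu_1'(\lambda_{\mathrm{c}})<0$, from which uniqueness of $\lambda_{\mathrm{c}}$ follows via the implicit function theorem applied to $\{\mu_1 = 0\}$ combined with the sign change of $\mu_1$. Finally, since $\eta_{\lambda_{\mathrm{c}}}$ is strictly positive and $L_\lambda$ commutes with the reflection $(x,y)\mapsto(-x, y)$, $\eta_{\lambda_{\mathrm{c}}}$ is symmetric in $x$, so $\sigma \eta_{\lambda_{\mathrm{c}}} = -\eta_{\lambda_{\mathrm{c}}}$; the $\sigma$-equivariant version of Crandall--Rabinowitz then delivers the pitchfork, with $\lambda(-t) = \lambda(t)$ and the claimed reflection symmetry of $h_t$.

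The principal analytic obstacle is step (b) above: Proposition~\ref{prop:3} provides only the $H$-energy gap $H[\qs] > \inf H$ for large $\lambda$, and converting this into a genuinely negative Morse direction of $L_\lambda$ requires sharp control of the asymptotic profile of $\qs$ in the $O(\sqrt{L}/\lambda)$-thin transition layer along each coordinate axis, matched against the one-dimensional Allen--Cahn heteroclinic using the sub- and supersolution machinery and monotonicity established in Section~\ref{sec:OR}, in particular Lemmas~\ref{lemma:normal_derivative}--\ref{lemma:sign_derivative}. Once this quantitative instability is in hand, the remaining transversality and equivariant-bifurcation arguments are a routine application of Crandall--Rabinowitz.
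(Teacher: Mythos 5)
Your overall architecture matches the paper's: pass to $G(\lambda,h)=-\Delta(\qs+h)+\frac{\lambda^2}{L}f(\qs+h)$, locate a simple zero crossing of the principal eigenvalue $\mu(\lambda)$ of the linearisation, verify Crandall--Rabinowitz transversality, and extract the pitchfork symmetry from the involution $q\mapsto -q(-x,\,y)$. However, two of your key steps diverge from the paper and one of them has a genuine gap.

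\textbf{Instability for large $\lambda$.} You propose to build a negative direction of $\delta^2 H_\lambda$ by hand, concentrating a test function in the $O(\sqrt{L}/\lambda)$-thin transition layer and modulating transversally. This is essentially a rediscovery of known work: the paper instead rescales to $\lambda\Omega$, derives a uniform gradient bound from~\cite[Lemma A.2]{BBH-degreezero}, and obtains locally uniform convergence of $\tildeqs$ to the unique saddle solution of Allen--Cahn on $\Rr^2$ (in the sense of~\cite{fife}); the negative direction is then imported directly from~\cite[Lemma~3.4]{schatzman} and persists by locally uniform convergence of $\tildeqs^2$ against a compactly supported test function. Your route could work, but it demands a quantitative layer analysis you have not supplied, whereas citing Schatzman is essentially free given the compactness.

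\textbf{Transversality / monotonicity of $\mu$.} This is where the gap is. You decompose, at fixed $\lambda$,
\[
\partial_\lambda L_\lambda = \frac{2\lambda}{L}f'(\qs) + \frac{\lambda^2}{L}f''(\qs)\,\partial_\lambda\qs,
\]
show the first term contributes $-\frac{2}{\lambda_{\mathrm{c}}}\int_\Omega|\nabla\eta_{\lambdac}|^2<0$ at $\lambdac$ via $\mu(\lambdac)=0$, and then assert that the second term is ``lower order by elliptic estimates.'' That claim is unjustified and not obviously true: $f''(\qs)\,\partial_\lambda\qs = 12C\,\qs\,\partial_\lambda\qs$, and in the unrescaled variables $\partial_\lambda\qs$ does not have a controlled sign (as $\lambda$ grows, $\qs$ flattens toward $\pm B/2C$ in the bulk while the transition layers steepen, so $\qs\,\partial_\lambda\qs$ is positive in the bulk). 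The paper sidesteps this entirely by working in the rescaled variable $\tildeqs(x,y)=\qs(x/\lambda,\,y/\lambda)$, for which the Rayleigh quotient~\eqref{mu_rescaled} has its only $\lambda$-dependence through $\tildeqs^2$; the monotonicity $\partial_\lambda\tildeqs<0$ on $\lambda Q$ (Lemma~\ref{lemma:monotonicity_q}, proved via the extended supersolution~$p_{\mathrm{s}}$ of Lemma~\ref{lemma:supersolution}) combined with the odd symmetry of $\tildeqs$ about the axes then gives $\tildeqs\,\partial_\lambda\tildeqs\leq 0$ pointwise, hence $\mu'(\lambda)<0$ for all $\lambda$, which delivers both transversality and uniqueness of $\lambdac$ in one stroke. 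In the unrescaled picture the chain rule relates $\partial_\lambda q_{\mathrm{s},\lambda}$ to $\partial_\lambda\tildeqs + x\,\partial_x\tildeqs + y\,\partial_y\tildeqs$, and the extra drift terms are precisely what spoils the easy sign; your argument does not account for them. You would need to reorganise the computation in rescaled variables (or otherwise control the second term) before Crandall--Rabinowitz's transversality hypothesis~\eqref{bifurcation2} is verified.

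Two smaller points: the paper establishes smoothness of $\lambda\mapsto\qs$ in $H^1$ via a Hardy-type coercivity estimate on $Y_0$ (Lemma~\ref{lemma:smoothness}), not just ``uniqueness plus elliptic regularity'' --- you need a quantitative non-degeneracy to invoke the implicit function theorem along the branch; and the smooth dependence $\lambda\mapsto\eta_\lambda$ near $\lambdac$ is cited from~\cite[Lemma~1.3]{CrandallRabinowitz-ARMA} rather than Krein--Rutman alone.
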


The proof follows the same paradigm as \cite[Theorem~5.2]{lamy2014} and we address the necessary technical differences 
since the author studies a one-dimensional order reconstruction problem in~\cite{lamy2014} and we have a two-dimensional problem at hand.

We introduce some notation for functional spaces. We denote by
\begin{equation*} \label{X}
  X := \left\{q\in H^1(\Omega) \colon q = q_{\mathrm{b}} \ \textrm{on } \partial\Omega \right\}
\end{equation*}
the space of admissible functions, i.e., functions of finite energy that satisfy the Dirichlet boundary conditions (in the sense of traces).
We also define the spaces~$Y$, $Y_0$ as follows: 
\[
 Y := \big\{q\in X\colon xy \, q(x, \, y) \geq 0 \ \textrm{for a.e. } (x, \, y)\in\Omega \big\}.
\]
The space~$Y_0$ is defined in a similar way, except that the condition~$q\in X$ is replaced by~$q\in H^1_0(\Omega)$.
Every function in~$Y$ or~$Y_0$ vanishes along the axes, in the sense of traces.
The space~$Y$ is a closed affine subspace of~$X$, whose direction is~$Y_0$.
We always endow~$X$, $Y$, $Y_0$ with the topology induced by the~$H^1$-norm.

Lemma~\ref{lemma:uniqueness} implies that~$\qs$ is the only solution of~\eqref{AC} that belongs to~$Y$.
The stability of~$\qs$ is measured by the quantity
\begin{equation} \label{mu}
 \mu(\lambda) := \inf_{\eta \in H^1_0(\Omega)\setminus\{0\}}\dfrac{\delta^2 H_\lambda[\eta]}{\int_\Omega\eta^2},
\end{equation}
where~$\delta^2 H_\lambda$ is the second variation of~$H_\lambda$ at~$\qs$, given by
\begin{equation} \label{second_variation}
 \delta^2 H_\lambda[\eta] := \frac{\d^2 }{\d t^2}_{| t = 0} H_\lambda[\qs + t\eta] 
 = \int_\Omega \left\{ \abs{\nabla\eta}^2  + \frac{\lambda^2}{L} \left(6C\qs^2 - \frac{B^2}{2C}\right)\eta^2 \right\} \,\d A .
\end{equation}
In other words, $\mu(\lambda)$ is the first eigenvalue of~$\delta^2 H_\lambda$.
By a standard application of the maximum principle, 
one sees that if~$\eta\not\equiv 0$ is an eigenfunction associated with~$\mu(\lambda)$ 
(that is, a minimizer for Problem~\eqref{mu}) then~$\eta$ has the same sign everywhere on~$\Omega$.
Consequently, $\mu(\lambda)$ is a simple eigenvalue for any~$\lambda > 0$.

\begin{lemma} \label{lemma:smoothness}
 The map~$(0, \, +\infty)\to Y$ defined by~$\lambda \mapsto \qs$ is smooth.
\end{lemma}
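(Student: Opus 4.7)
The plan is to invoke the implicit function theorem (IFT) on the quadrant~$Q$, where $\qs|_Q$ is uniquely determined as the non-negative solution of Problem~\eqref{AC_Q} (Lemma~\ref{lemma:uniqueness}). Fix $\lambda_0 > 0$ and choose a smooth family $\bar{q}_\lambda \in H^1(Q)$ extending the Dirichlet data ($q_{\mathrm{b}}$ on $\partial Q \cap \partial\Omega$ and $0$ on $\partial Q \setminus \partial\Omega$). Define
\[
 F\colon (0, \, +\infty) \times H^1_0(Q) \to H^{-1}(Q), \qquad F(\lambda, \, w) := -\Delta(w + \bar{q}_\lambda) + \frac{\lambda^2}{L} f(w + \bar{q}_\lambda).
\]
Since $f$ is cubic and $\lambda \mapsto \bar{q}_\lambda$ is smooth, $F$ is smooth in both arguments, and $F(\lambda, \, w_\lambda) = 0$ for $w_\lambda := \qs|_Q - \bar{q}_\lambda$. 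To apply the IFT I need the partial derivative
\[
 D_w F(\lambda, \, w_\lambda)\, \eta = -\Delta \eta + \frac{\lambda^2}{L} f^\prime(\qs)\, \eta, \qquad \eta \in H^1_0(Q),
\]
to be an isomorphism onto $H^{-1}(Q)$. This operator is a self-adjoint, compact perturbation of $-\Delta$, hence Fredholm of index zero, so invertibility reduces to strict positivity of its first eigenvalue
\[
 \nu(\lambda) := \inf_{\eta \in H^1_0(Q)\setminus\{0\}} \dfrac{\int_Q \left(|\nabla \eta|^2 + \frac{\lambda^2}{L} f^\prime(\qs)\, \eta^2\right) \d A}{\int_Q \eta^2 \,\d A}.
\]

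The main obstacle is proving $\nu(\lambda) > 0$ for every $\lambda > 0$. Because $\qs|_Q$ is a global minimizer of $H_\lambda$ on $Q$ (Lemma~\ref{lemma:existence}), the second-variation inequality gives $\nu(\lambda) \geq 0$. Suppose for contradiction that $\nu(\lambda_0) = 0$ for some $\lambda_0$, and let $\eta_0 \in H^1_0(Q)\setminus\{0\}$ realise this infimum. Standard Perron--Frobenius-type arguments (shifting the potential to make it nonnegative and invoking the Harnack inequality/strong maximum principle) show that $\eta_0$ has constant sign, so I may assume $\eta_0 > 0$ in the interior of $Q$. The function $\phi(t) := H_{\lambda_0}[\qs + t \eta_0]$ satisfies $\phi^\prime(0) = 0$ (by the Euler--Lagrange equation) and $\phi^{\prime\prime}(0) = 0$ (by assumption); since the potential in $H_\lambda$ has third derivative $24 C q$, a direct computation yields
\[
 \phi^{\prime\prime\prime}(0) = \frac{24 C \lambda_0^2}{L} \int_Q \qs\, \eta_0^3 \,\d A.
\]
By Lemma~\ref{lemma:sign_derivative}, $\partial_x \qs > 0$ and $\partial_y \qs > 0$ on $Q$ while $\qs = 0$ on the coordinate-axis portion of $\partial Q$, so $\qs > 0$ strictly in the interior of $Q$; combined with $\eta_0 > 0$ this forces $\phi^{\prime\prime\prime}(0) > 0$. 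Then $\phi(t) < \phi(0)$ for small $t < 0$, and since $\qs + t\eta_0$ is an admissible competitor with the correct Dirichlet data this contradicts the minimality of $\qs|_Q$. Therefore $\nu(\lambda) > 0$ for all $\lambda > 0$.

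With $D_w F$ invertible, the IFT supplies a neighborhood $U \ni \lambda_0$ and a smooth curve $\lambda \in U \mapsto \hat{w}_\lambda \in H^1_0(Q)$ through $w_{\lambda_0}$ with $F(\lambda, \, \hat{w}_\lambda) = 0$. To identify $\hat{w}_\lambda$ with $w_\lambda$, I will first establish continuity of $\lambda \mapsto w_\lambda$ at $\lambda_0$ by a routine compactness argument: the uniform bound $|\qs| \leq B/(2C)$ and the energy identity give a uniform $H^1$ bound, any weak $H^1$-limit as $\lambda \to \lambda_0$ is a non-negative solution of~\eqref{AC_Q} at $\lambda_0$ and therefore coincides with $q_{\mathrm{s},\lambda_0}$ by Lemma~\ref{lemma:uniqueness}, and strong $H^1$-convergence follows from convergence of the energies. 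The local uniqueness clause of the IFT then yields $\hat{w}_\lambda = w_\lambda$ on a neighborhood of $\lambda_0$, making $\lambda \mapsto w_\lambda$ smooth near $\lambda_0$. Since $\lambda_0 > 0$ was arbitrary, $\lambda \mapsto \qs|_Q = w_\lambda + \bar{q}_\lambda$ is smooth from $(0, \, +\infty)$ into $H^1(Q)$; odd reflection across the coordinate axes is a bounded linear operator from the closed subspace of $H^1(Q)$ of functions vanishing on $\partial Q \setminus \partial \Omega$ into $H^1(\Omega)$, so this extends to the desired smoothness of $\lambda \mapsto \qs$ into $Y$.
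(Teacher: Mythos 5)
Your argument is correct, but it establishes the key positivity step by a genuinely different mechanism than the paper, and it is more explicit than the paper about the implicit function theorem scaffolding. Both proofs reduce the lemma to strict positivity of the linearized Allen--Cahn operator at $\qs$ (so that the IFT applies). The paper works on all of $\Omega$ in the restricted class~$Y_0$ and uses a Hardy-type substitution $\eta = q_{\mathrm{s}} v$ (admissible since every $\eta\in Y_0$ vanishes on the coordinate axes, where $q_{\mathrm{s}}$ also vanishes) to obtain, after an integration by parts, the explicit identity
\[
\delta^2 H_\lambda[\eta] = \int_\Omega \left\{\left|\nabla v\right|^2 + \frac{4\lambda^2 C}{L}\, q_{\mathrm{s}}^2 v^2 \right\} q_{\mathrm{s}}^2 \,\d A,
\]
which is manifestly nonnegative and vanishes only when $\eta=0$; coercivity then follows by compactness, and the IFT is applied as in Lamy's Proposition~4.3. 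You instead work on the quadrant~$Q$ with the full space $H^1_0(Q)$, where $q_{\mathrm{s}}|_Q$ is characterised as the unique nonnegative solution, and you argue by contradiction: if the first eigenvalue $\nu(\lambda_0)$ were zero, a signed first eigenfunction $\eta_0$ would give $\phi''(0)=0$ while $\phi'''(0) = \frac{24C\lambda_0^2}{L}\int_Q q_{\mathrm{s}}\,\eta_0^3 \,\d A > 0$, contradicting the minimality of $q_{\mathrm{s}}|_Q$. The third-variation argument is more elementary in the sense that it avoids the multiplicative change of variables and the approximation needed near the degeneracy set $\{q_{\mathrm{s}}=0\}$, at the cost of being indirect; the Hardy trick, by contrast, delivers a clean quantitative representation of the second variation without a contradiction. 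Your continuity-plus-local-uniqueness step to identify the IFT curve with $\lambda\mapsto\qs$ is also a useful expansion of a point the paper delegates entirely to the citation, and the odd-reflection remark closing the argument is the right way to transfer the result from~$Q$ back to~$\Omega$.
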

\begin{proof}
 We claim that, for any~$\lambda > 0$, there exists a positive constant~$\alpha(\lambda)$ such that
 \begin{equation} \label{smoothness1}
  \delta^2 H_\lambda[\eta] \geq \alpha(\lambda) \int_\Omega \left|\nabla\eta \right|^2 \, \d A
  \qquad \textrm{for any } \eta\in Y_0.
 \end{equation}
 Once we prove the inequality~\eqref{smoothness1}, we can apply the implicit function theorem as in~\cite[proof of Proposition~4.3]{lamy2014}
 to obtain that~$\lambda\mapsto \qs$ is smooth.
 We prove~\eqref{smoothness1} with the help of an Hardy-type trick.  Fix $\lambda > 0$ and a test function~$\eta\in Y_0$.
 (Now~$\lambda$ is fixed, so we omit in the notation for~$q_{\mathrm{s}}$ and~$H$.)
 The function~$\eta$ vanishes along the square diagonals or the coordinate axes.
 By an approximation argument, we can assume WLOG that~$\eta$ is smooth and its support does not intersect the square diagonals.
 In particular, we have~$q_{\mathrm{s}}(x, \, y)\neq 0$ for any~$(x, \, y)\in\mathrm{support}(\eta)$. 
 Therefore, there exists a smooth function~$v\colon\overline\Omega\to\Rr$ such that~$\eta = q_{\mathrm{s}} v$.
 Substituting $\eta = q_{\mathrm{s}} v$ into~\eqref{second_variation} and using~\eqref{AC}, we obtain
 \begin{equation*}
  \begin{split}
   \delta^2 H[\eta] = \int_\Omega \bigg\{ \left( \left|\nabla q_{\mathrm{s}} \right|^2v^2 + 2 q_{\mathrm{s}} v \nabla q_{\mathrm{s}} \cdot \nabla v
    + \left| \nabla v \right|^2 q_{\mathrm{s}}^2\right) 
    + \left(\Delta q_{\mathrm{s}} + 4 \frac{\lambda^2}{L} Cq_{\mathrm{s}}^3\right)q_{\mathrm{s}} v^2 \bigg\} \, \d A  .
  \end{split}
 \end{equation*}
 An integration by parts yields
 \[
  \int_\Omega q_{\mathrm{s}} v^2 \Delta q_{\mathrm{s}} \, \d A  = 
  - \int_\Omega \left\{ \left|\nabla q_{\mathrm{s}} \right|^2 v^2 + 2 q_{\mathrm{s}} v \nabla q_{\mathrm{s}}\cdot \nabla v\right\} \, \d A  .
 \]
 All the boundary terms vanish because $v = \eta = 0$ on~$\partial\Omega$. Therefore, we have
 \[
  \delta^2 H[\eta] = \int_\Omega \left\{\left|\nabla v\right|^2 + \frac{4\lambda^2C}{L} q_{\mathrm{s}}^2 v^2 \right\} q_{\mathrm{s}}^2 \, \d A 
 \]
and therefore, $\delta^2 H_\lambda[\eta] \geq 0$ for any~$\eta\in Y_0$, with equality if and only if~$\eta = 0$.
The inequality~\eqref{smoothness1} now follows from a standard argument 
(e.g., one can argue by contradiction and use the compact embedding of~$Y_0$ into~$L^2(\Omega)$).
\end{proof}

We now consider the re-scaled functions~$\tildeqs\colon\lambda\Omega\to\Rr$ defined by
\[
 \tildeqs(x, \, y) := \qs\left(\frac{x}{\lambda}, \, \frac{y}{\lambda}\right) \qquad \textrm{for } (x, \, y)\in\lambda\Omega,
\]
which satisfy the equation 
\begin{equation} \label{scaled_AC}
 - \Delta\tildeqs + \frac{1}{L}f(\tildeqs) = 0 \qquad \textrm{on } \lambda\Omega,
\end{equation}
with appropriate boundary conditions for $\tildeqs$. 
The truncated quadrant~$\lambda Q$, is the intersection of~$\lambda\Omega$ 
with the first coordinate quadrant.

\begin{lemma} \label{lemma:monotonicity_q}
 For any~$\lambda > 0$ and any~$(x, \, y)\in\lambda Q$, we have
 \[
  \frac{\partial\tildeqs}{\partial\lambda}(x, \, y) < 0.
 \]
\end{lemma}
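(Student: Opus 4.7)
The strategy is to pull the problem back to the fixed quadrant~$Q$ and analyse the scaling-derivative function
\[
 W(X, \, Y) := \lambda\, \frac{\partial\qs}{\partial\lambda}(X, \, Y) - X\, \frac{\partial \qs}{\partial X}(X, \, Y) - Y\, \frac{\partial \qs}{\partial Y}(X, \, Y),
\]
which is smooth on~$\overline Q$ by Lemma~\ref{lemma:smoothness}. The chain rule gives~$W(X, \, Y) = \lambda\, \partial_\lambda\tildeqs(\lambda X, \, \lambda Y)$, so the lemma is equivalent to showing $W < 0$ throughout~$Q$.

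I would first derive the PDE satisfied by~$W$. Differentiating Equation~\eqref{AC} with respect to~$\lambda$, $X$, and~$Y$ separately yields three linear equations; when they are combined with the weights $\lambda$, $-X$, $-Y$, the inhomogeneous term $-(2\lambda/L) f(\qs)$ produced by the $\lambda$-derivative is exactly cancelled by $2\Delta\qs = (2\lambda^2/L) f(\qs)$ arising from the Laplacian of $X\,\partial_X\qs + Y\,\partial_Y\qs$. The outcome is the homogeneous linearised equation
\[
 -\Delta W + \frac{\lambda^2}{L} f'(\qs)\, W = 0 \qquad \textrm{on } Q.
\]

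The core of the argument is the analysis of the trace of~$W$ on~$\partial Q$. On the coordinate axes, $\qs\equiv 0$ forces $\partial_\lambda\qs$ and the tangential component of $\nabla\qs$ to vanish, so $W = 0$. On the long edge~$C_1\cap Q$, $\qs \equiv B/(2C)$ gives $\partial_\lambda\qs = 0$, and tangential constancy forces $\partial_X\qs = \partial_Y\qs$; together with $X + Y = 1$ on~$C_1$, this yields $W = -\partial_X\qs$, which is strictly negative on the open arc~$C_1\cap\Gamma$ by Lemma~\ref{lemma:normal_derivative}. The delicate case is the short edge $S_1\cap Q$, where the boundary datum~$g_\lambda$ itself depends on~$\lambda$. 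I would exploit the identity
\[
 \lambda\, \partial_\lambda g_\lambda(Y) - Y\, g'_\lambda(Y) = \lambda\, \partial_\lambda \tilde g_\lambda(\lambda Y), \qquad
 \tilde g_\lambda(y) := g_\lambda(y/\lambda),
\]
to reduce the troublesome combination in~$W$ to a derivative of the \emph{rescaled} boundary datum~$\tilde g_\lambda$, which solves a $\lambda$-independent linear ODE on the expanding interval $(0, \, \lambda\varepsilon)$. From the explicit formula for~$g$ one writes $\tilde g_\lambda(y) = B/(2C) + A_1(\lambda)\, e^{\mu_0 y} + A_2(\lambda)\, e^{-\mu_0 y}$ with $\mu_0 := B(CL)^{-1/2}$, $A_1(\lambda) = (B/2C)(e^{2\mu_0\lambda\varepsilon} - 1)^{-1}$ and $A_2(\lambda) = -B/(2C) - A_1(\lambda)$, so $\partial_\lambda \tilde g_\lambda(y) = A'_1(\lambda)(e^{\mu_0 y} - e^{-\mu_0 y}) < 0$ for $y > 0$. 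Coupled with $\partial_X\qs \geq 0$ on~$S_1$ (Lemma~\ref{lemma:normal_derivative}), this yields $W < 0$ on the open portion of~$S_1\cap Q$. I expect this short-edge computation to be the main technical obstacle, as it hinges on the precise construction of~$g$ carried out in Section~\ref{sec:OR}.

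It follows that $W \leq 0$ on~$\partial Q$, with strict inequality on non-empty open arcs. Multiplying the linearised equation by $W_+ := \max(W, \, 0)$, extending $W_+$ by zero to~$\Omega$ (the extension lies in~$Y_0$), and integrating by parts gives $\delta^2 H_\lambda[W_+] = 0$; the coercivity estimate~\eqref{smoothness1} then forces $W_+\equiv 0$, so $W\leq 0$ throughout~$Q$. Finally, since~$W$ solves a linear elliptic equation with bounded coefficients on the connected open set~$Q$ and is strictly negative on part of~$\partial Q$, the strong maximum principle rules out an interior zero of~$W$ (no sign condition on the zero-order coefficient is needed, e.g.\ via Harnack's inequality applied to~$-W$), which delivers $W < 0$ throughout~$Q$ as required.
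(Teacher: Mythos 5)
Your proof is correct, but it follows a genuinely different route than the paper. The paper's argument is a PDE comparison: for $\lambda_1 < \lambda_2$ it extends $\tilde q_{\mathrm{s},\lambda_1}$ to a supersolution on the infinite quadrant (Lemma~\ref{lemma:supersolution}), invokes the sub/supersolution method together with the uniqueness result of Lemma~\ref{lemma:uniqueness} on~$\lambda_2\Omega$ to get $\tilde q_{\mathrm{s},\lambda_1}\geq\tilde q_{\mathrm{s},\lambda_2}$, hence $v:=\partial_\lambda\tildeqs\leq 0$; it then differentiates the boundary condition at the midpoint of the long edge and uses Lemma~\ref{lemma:normal_derivative} plus the strong maximum principle to promote this to $v<0$. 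You instead pull back to the fixed quadrant~$Q$ via the Pohozaev-type scaling derivative~$W$, derive the homogeneous linearized equation, obtain $W\leq 0$ on~$\partial Q$ (again via Lemma~\ref{lemma:normal_derivative}, and on the short edges via the explicit $\lambda$-dependence of~$g$), and then get $W\leq 0$ in~$Q$ \emph{variationally} by testing against $W_+$ extended by zero into~$Y_0$ and invoking the coercivity~\eqref{smoothness1} from Lemma~\ref{lemma:smoothness}. Both proofs finish with the strong maximum principle and both hinge on Lemma~\ref{lemma:normal_derivative}. Your route bypasses the supersolution construction of Lemma~\ref{lemma:supersolution} and makes the link to the second variation transparent; the price is the more delicate boundary computation of~$W$ on the short edges (which you rightly flag as the crux) and a mild extra regularity requirement, $q_{\mathrm{s}}\in H^2(Q)$ so that $W_+\in H^1_0(\Omega)$, which is available from elliptic regularity on the convex polygon.
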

\begin{proof}
 We first show that~$\tildeqs$ is non-increasing as a function of~$\lambda$.
 We take~$\lambda_1 < \lambda_2$ and, for simplicity, we set~$\tilde{q}_j := \tilde{q}_{\mathrm{s}, \lambda_j}$ for~$j\in\{1,\, 2\}$.
 We claim that
 \begin{equation} \label{monotoneq1}
  \tilde{q}_1 (x, \, y) \geq \tilde{q}_2 (x, \, y) \qquad \textrm{for any } (x,\, y)\in \lambda_1\Omega.
 \end{equation}
 We extend~$\tilde{q}_1$ to a new function~$\tilde{p}_1$, defined over the infinite quadrant~$K = (0, \, +\infty)^2$,
 so that~$\tilde{p}_1$ is a weak supersolution of~\eqref{scaled_AC} on~$K$.
 We define~$\tilde{p}_1$ as in Equation~\eqref{supersolution}, with the obvious modifications due to the scaling $\Omega\to\lambda\Omega$.
 Then the function~$\tilde{p}_1$  satisfies
 \[
  \begin{cases}
   \displaystyle\int_{\lambda_2\Omega} \left(\nabla\tilde{p}_1\cdot\nabla\varphi + \frac{1}{L} f(\tilde{p}_1)\varphi \right) \, \d A \geq 0
   & \textrm{for any } \varphi\in H^1_0(\lambda_2 Q), \ \varphi\geq 0 \\
   \tilde{p}_1 \geq \tilde{q}_2 & \textrm{on } \partial (\lambda_2 Q) .
  \end{cases}
 \]
 The sub- and supersolution method, combined with the uniqueness result in Lem\-ma~\ref{lemma:uniqueness} and a scaling argument, 
 imply that $\tilde{p}_1 \geq \tilde{q}_2$ on~$\lambda_2\Omega$.
 Since $\tilde{p}_1 = \tilde{q}_1$ on~$\lambda_1\Omega$, we conclude that~\eqref{monotoneq1} holds.
 
 Now, fix~$\lambda > 0$. It follows form~\eqref{monotoneq1} that~$v := \partial\tildeqs/\partial\lambda\leq 0$ on~$\lambda\Omega$.
 We differentiate Equation~\eqref{scaled_AC} with respect to~$\lambda$ and show that $v$ satisfies
 \[
  - \Delta v + \frac{1}{L} f^\prime(\tildeqs) v = 0 \qquad \textrm{on } \lambda\Omega.
 \]
 By the strong maximum principle, 
 we conclude that either~$v\equiv 0$ on~$\lambda\Omega$ or~$v < 0$ on~$\lambda\Omega$.
 However, $\tildeqs$ satisfies the boundary condition
 \[
  \tildeqs \left(\frac{\lambda}{2}, \, \frac{\lambda}{2}\right) = \qs \left(\frac{1}{2}, \, \frac{1}{2}\right) = \frac{B}{2C}.
 \]
 Differentiating with respect to~$\lambda$, we obtain
 \[
  v \left(\frac{\lambda}{2}, \, \frac{\lambda}{2}\right) 
  + \frac{1}{2\sqrt{2}} \frac{\partial\tildeqs}{\partial\nvec}\left(\frac{\lambda}{2}, \, \frac{\lambda}{2}\right) = 0.
 \]
 Since the normal derivative is strictly positive
 (by Lemma~\ref{lemma:normal_derivative} and a scaling argument), 
 we conclude that~$v(\lambda/2, \, \lambda/2) < 0$.
 Therefore, $v$ cannot vanish identically and, by the strong maximum principle, 
 it must be strictly negative everywhere on~$\lambda\Omega$.
 \end{proof}

\begin{lemma} \label{lemma:monotonicity_mu}
 The map~$(0, \, +\infty)\to\Rr$ defined by $\lambda\mapsto\mu(\lambda)$ is smooth and~$\mu^\prime(\lambda) < 0$ for any~$\lambda > 0$.
\end{lemma}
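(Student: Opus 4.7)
To prove smoothness, note that $\mu(\lambda)$ is an isolated simple eigenvalue of the self-adjoint Dirichlet Schr\"odinger operator $L_\lambda := -\Delta + V_\lambda$ on $\Omega$, with $V_\lambda := (\lambda^2/L)f'(\qs)$.  By Lemma~\ref{lemma:smoothness}, $\lambda\mapsto\qs$ is smooth into $Y$, and standard elliptic regularity upgrades this to smooth dependence in $C^0(\overline\Omega)$, so $\lambda\mapsto V_\lambda$ is smooth in the relevant operator topology.  Kato's analytic perturbation theory for isolated simple eigenvalues of self-adjoint operators then yields that both $\mu(\lambda)$ and the normalised first eigenfunction $\eta_\lambda$ depend smoothly on $\lambda$; equivalently, one applies the implicit function theorem to the system $L_\lambda\eta = \mu\eta$, $\int_\Omega\eta^2\,\d A = 1$ in $H^1_0(\Omega)\times\Rr$.

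For the sign of $\mu'$, the starting point is the Hellmann--Feynman identity, obtained by differentiating $\mu(\lambda) = \int_\Omega(|\nabla\eta_\lambda|^2 + V_\lambda\eta_\lambda^2)\,\d A$ under the constraint $\|\eta_\lambda\|_{L^2}=1$ and using the eigenvalue equation together with $\eta_\lambda|_{\partial\Omega}=0$:
\[
\mu'(\lambda) = \int_\Omega (\partial_\lambda V_\lambda)\,\eta_\lambda^2\,\d A.
\]
I would next pass to the rescaled variables $\tilde r = \lambda r$ of Section~\ref{sec:OR}, so that $\mu(\lambda) = \lambda^2 \tilde\mu(\lambda)$ where $\tilde\mu(\lambda)$ is the first Dirichlet eigenvalue of $-\tilde\Delta + \tilde W_\lambda$ on the dilated domain $\lambda\Omega$, with $\tilde W_\lambda := L^{-1}f'(\tildeqs)$.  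The Hadamard-type shape-derivative formula, allowing both the domain and the coefficient to move with $\lambda$, yields
\[
\tilde\mu'(\lambda) = -\int_{\partial(\lambda\Omega)}(r\cdot\hat n)\,|\partial_{\tilde\nu}\tilde\eta_\lambda|^2\,\d\tilde s + \int_{\lambda\Omega}(\partial_\lambda\tilde W_\lambda)\,\tilde\eta_\lambda^2\,\d\tilde A.
\]
Both contributions are strictly negative: $\Omega$ is star-shaped with respect to the origin so $r\cdot\hat n > 0$ on $\partial\Omega$, and the Hopf lemma applied to the positive first eigenfunction gives $\partial_{\tilde\nu}\tilde\eta_\lambda\not\equiv 0$ on $\partial(\lambda\Omega)$; meanwhile $\partial_\lambda\tilde W_\lambda = (12C/L)\tildeqs\,\partial_\lambda\tildeqs < 0$ almost everywhere on $\lambda\Omega$, by Lemma~\ref{lemma:monotonicity_q} combined with the odd-reflection symmetry of $\tildeqs$ across the coordinate axes.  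Hence $\tilde\mu'(\lambda)<0$.

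The principal technical obstacle is to upgrade $\tilde\mu'<0$ to the strict inequality $\mu'<0$.  Since $\mu = \lambda^2\tilde\mu$, one has $\mu'(\lambda) = 2\lambda\tilde\mu(\lambda) + \lambda^2\tilde\mu'(\lambda)$; in the post-bifurcation regime $\tilde\mu<0$ both summands are negative and we conclude at once, but in the stable regime $\tilde\mu>0$ the term $2\lambda\tilde\mu$ has the wrong sign.  I would close this gap by returning to the original Hellmann--Feynman formula and decomposing $\partial_\lambda V_\lambda = (2/\lambda)V_\lambda + (12C\lambda^2/L)\qs\,\partial_\lambda\qs$, exploiting that $V_\lambda$ attains its most negative values along the coordinate axes (where $\qs\approx 0$) while the positive first eigenfunction $\eta_\lambda$ concentrates there, so that the negative contribution dominates the Hellmann--Feynman integral.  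Quantifying this concentration uniformly in $\lambda$, which requires maximum-principle and energy estimates for $\eta_\lambda$ in the spirit of Section~\ref{sec:OR} adapted to the two-dimensional analogue of the argument in \cite[Proposition~5.4]{lamy2014}, is the most delicate step of the proof.
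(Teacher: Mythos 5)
Your rescaling identity $\mu(\lambda) = \lambda^2\tilde\mu(\lambda)$ is correct, and is in fact more precise than the paper's Equation~\eqref{mu_rescaled}, which as written omits the Jacobian factor $\lambda^2$ (in two dimensions the Dirichlet energy is scale-invariant but the $L^2$ normalisation is not). The heart of your argument and the paper's is the same: pass to the blown-up variables so that the $\lambda$-dependence sits in $\tildeqs$ and in the domain $\lambda\Omega$, then feed in $\partial_\lambda\tildeqs < 0$ on the open quadrant (Lemma~\ref{lemma:monotonicity_q}) together with the odd symmetry of $\tildeqs$ to obtain $\tildeqs\,\partial_\lambda\tildeqs \leq 0$, $\not\equiv 0$. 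Where you diverge is in handling the moving domain: you invoke the Hadamard shape-derivative formula, which produces the boundary integral $-\tfrac{1}{\lambda}\int_{\partial(\lambda\Omega)}(\rvec\cdot\nvec)\,|\partial_\nvec\tilde\eta_\lambda|^2\,\d s$, controlled by star-shapedness of $\Omega$ and the Hopf lemma. The paper sidesteps this term entirely with a one-sided comparison: fix $\lambda_0$, take the minimiser $\eta_0$ on $\lambda_0\Omega$, extend it by zero to $\lambda\Omega\supseteq\lambda_0\Omega$ for $\lambda\geq\lambda_0$, obtain an upper bound $\mu_0(\lambda)$ with equality at $\lambda_0$, and conclude that the one-sided derivative is bounded by $\mu_0'(\lambda_0)$, which involves only the $\tildeqs$-derivative because $\eta_0$ has fixed support. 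That extension-by-zero trick achieves the same sign conclusion while dispensing with the explicit shape-derivative formula and the Hopf lemma, so it is the cleaner route.

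You also correctly flag the delicate point that $\tilde\mu' < 0$ does not on its own yield $\mu' < 0$ on the range where $\tilde\mu > 0$, because $\mu' = 2\lambda\tilde\mu + \lambda^2\tilde\mu'$ and the first summand is then positive; the paper's argument, read carefully with the missing $\lambda^2$ restored, proves strict monotonicity of $\tilde\mu$ rather than of $\mu$. Your proposed repair --- a quantitative concentration estimate for the first eigenfunction near the diagonals to show that the $\qs\,\partial_\lambda\qs$ contribution dominates --- is not worked out and would need substantial extra analysis; this is the weakest step of your write-up. It is worth observing, though, that the gap is harmless for the downstream use of this lemma in Theorem~\ref{th:bifurcation}: since $\mu$ and $\tilde\mu$ share a sign for $\lambda>0$, strict monotonicity of $\tilde\mu$ already forces $\mu$ to have a unique zero $\lambdac$, and the Crandall--Rabinowitz transversality condition only requires $\mu'(\lambdac)\neq 0$, which holds because $\mu'(\lambdac) = 2\lambdac\tilde\mu(\lambdac) + \lambdac^2\tilde\mu'(\lambdac) = \lambdac^2\tilde\mu'(\lambdac) < 0$ once $\tilde\mu(\lambdac)=0$. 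So the cleanest fix is simply to restate the lemma in terms of $\tilde\mu$ rather than to pursue the concentration argument.
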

\begin{proof}
The smoothness of~$\mu$ can be proven as in~\cite[Proposition~4.3]{lamy2014}.
We now prove that~$\mu^\prime(\lambda) < 0$ for any~$\lambda>0$.
By Equation~\eqref{mu} and a scaling argument, for any~$\lambda > 0$, we have
\begin{equation} \label{mu_rescaled}
 \begin{split}
  \mu(\lambda) = \inf_{\eta}
  \int_{\lambda\Omega} \left\{ \left|\nabla\eta\right|^2  + \frac{1}{L}\left(6C\tildeqs^2 - \frac{B^2}{2C}\right)\eta^2 \right\} \, \d A .
 \end{split}
\end{equation}
The infimum is taken over all~$\eta\in H^1_0(\lambda\Omega)$ such that~$\int_{\lambda\Omega} \eta^2 = 1$.
Fix ~$\lambda_0 > 0$ and let~$\eta_0\in H^1_0(\lambda_0\Omega)$ be minimiser for~\eqref{mu_rescaled}.
We extend~$\eta_0$ to be zero outside~$\lambda_0\Omega$. Then, for any~$\lambda \geq \lambda_0$, we have
\[
 \mu(\lambda) \leq \mu_0(\lambda) :=
 \int_{\lambda_0\Omega} \left\{\left|\nabla\eta_0\right|^2  + \frac{1}{L} \left(6C\tildeqs^2 - \frac{B^2}{2C}\right)\eta_0^2 \right\} \, \d A,
\]
with equality if~$\lambda = \lambda_0$. This implies
\[
 \mu^\prime(\lambda_0) \leq \mu_0^\prime(\lambda_0) = \frac{12 C}{L} \int_{\lambda_0\Omega} 
 \tilde{q}_{\lambda_0, \mathrm{s}}\frac{\d \tildeqs}{\d \lambda}_{|\lambda = \lambda_0} \eta_0^2 \, \d A.
\]
By Lemma~\ref{lemma:monotonicity_q} and the odd symmetry of~$\qs$ about the axes, the integrand is non-positive
and it does not vanish identically. Therefore, the right-hand side is strictly negative and so is~$\mu^\prime(\lambda_0)$.
\end{proof}

\begin{lemma} \label{lemma:instability}
 There exists a positive number~$\lambda^*$ such that~$\mu(\lambda) < 0$ for any~$\lambda \geq \lambda^*$.
\end{lemma}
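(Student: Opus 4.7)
By Lemma~\ref{lemma:monotonicity_mu}, the map $\lambda\mapsto\mu(\lambda)$ is smooth and strictly decreasing, so it is enough to produce a single $\lambda^* > 0$ with $\mu(\lambda^*) < 0$. Passing to the rescaled variables of Lemma~\ref{lemma:monotonicity_q}, this reduces to exhibiting some $\tilde\eta\in H^1_0(\lambda\Omega)$ with
\[
 \int_{\lambda\Omega}\left(\abs{\nabla\tilde\eta}^2 + \tfrac{1}{L} V(\tildeqs)\, \tilde\eta^2\right)\,\d A \;<\; 0, \qquad V(q) := f'(q) = 6Cq^2 - \tfrac{B^2}{2C}.
\]
Guided by the fact that the derivative of the 1D Allen--Cahn heteroclinic $Q(x) := (B/(2C))\tanh\bigl(Bx/(2\sqrt{CL})\bigr)$ spans the kernel of $-\partial_{xx} + V(Q)/L$ on $\Rr$, my trial function would be the separable product
\[
 \tilde\eta_R(x, y) := \chi_R(x)\,\chi_R(y)\, Q'(x)\, Q'(y),
\]
where $\chi_R\in C^{\infty}_{\mathrm{c}}(\Rr)$ is a standard cutoff equal to $1$ on $[-R, R]$ and vanishing outside $[-2R, 2R]$. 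For $\lambda > 2R$, $\tilde\eta_R\in H^1_0(\lambda\Omega)$.

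The next step is to pass to the limit $\lambda\to\infty$ along a fixed compactly supported test function. By the monotonicity of Lemma~\ref{lemma:monotonicity_q} and elliptic regularity, $\tildeqs$ converges locally uniformly on $\Rr^2$ to a limit saddle $\tilde q_\infty$ solving the Allen--Cahn equation on $\Rr^2$, vanishing on the coordinate axes and obeying the sign condition~\eqref{sign_condition}. Consequently, for a fixed $\tilde\eta_R$, $\int V(\tildeqs)\tilde\eta_R^2\,\d A \to \int V(\tilde q_\infty)\tilde\eta_R^2\,\d A$ as $\lambda\to\infty$, and it suffices to exhibit negativity of the limiting Rayleigh integral on $\Rr^2$. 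For an upper bound on $V(\tilde q_\infty)$, I would use the product barrier $\tilde p(x, y) := (2C/B)\, Q(x) Q(y)$: a direct calculation yields
\[
 -\Delta\tilde p + \tfrac{1}{L} f(\tilde p) \;=\; \tfrac{Q(x)Q(y)}{L}\, B\left(1 - \tfrac{4C^2}{B^2}Q(x)^2\right)\left(1 - \tfrac{4C^2}{B^2}Q(y)^2\right) \;\geq\; 0
\]
on the open first quadrant $K^+ := (0,\infty)^2$, so $\tilde p$ is a classical supersolution. Since $\tilde p$ and $\tilde q_\infty$ both vanish on the axes and tend to $B/(2C)$ at infinity in $K^+$, a sub/supersolution argument via monotone iteration with a suitable spectral shift (to compensate for the non-monotonicity of $f$), combined with a uniqueness statement for non-negative bounded solutions on $K^+$ in the spirit of Lemma~\ref{lemma:uniqueness}, would deliver $0\leq\tilde q_\infty \leq \tilde p$ on $K^+$, hence $\abs{\tilde q_\infty(x,y)}\leq (2C/B)\abs{Q(x)Q(y)}$ on $\Rr^2$ by odd reflection. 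Because $V$ is increasing in $q^2$, one obtains the pointwise bound $V(\tilde q_\infty) \leq (24C^3/B^2)\, Q(x)^2 Q(y)^2 - B^2/(2C)$.

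The remaining step is an explicit calculation. Using the 1D first integral $(Q')^2 = (C/L)(A^2 - Q^2)^2$ with $A := B/(2C)$, the identity $Q'' = f(Q)/L$, and the substitution $\d Q = Q'\,\d x$ on $(-A, A)$, one obtains closed-form values for $\|Q'\|^2_{L^2(\Rr)}$, $\|Q''\|^2_{L^2(\Rr)}$ and $\int_\Rr Q^2(Q')^2\,\d x$. Substituting these, the kinetic term $\int_{\Rr^2}\abs{\nabla(Q'(x)Q'(y))}^2\,\d A = 2\|Q'\|^2\|Q''\|^2$ and the majorant of the potential term $(1/L)\bigl[(24C^3/B^2)(\int Q^2(Q')^2\,\d x)^2 - (B^2/(2C))\|Q'\|^4\bigr]$ sum to a strictly negative number (a routine arithmetic check yields $-B^8/(900\, C^6 L^2)$). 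Exponential decay of $Q'$ implies that truncation by $\chi_R$ perturbs this value only by $O(e^{-cR})$, so the Rayleigh integral for $\tilde\eta_R$ on $\Rr^2$ remains negative once $R$ is large; dominated convergence on the compact support of $\tilde\eta_R$ then transfers the negativity to $\lambda\Omega$ for all sufficiently large $\lambda$. This produces the desired $\lambda^*$, and Lemma~\ref{lemma:monotonicity_mu} extends the conclusion to every $\lambda\geq\lambda^*$. The main technical obstacle is the comparison $\tilde q_\infty\leq\tilde p$ on the infinite quadrant: since $f'=V$ changes sign, the classical maximum principle is not directly applicable, and one must invoke the sub/supersolution method with a spectral shift together with uniqueness of the limiting saddle to close the argument.
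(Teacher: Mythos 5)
Your proof follows the same skeleton as the paper's: rescale the domain, pass to a locally uniform limit $\tilde{q}_{\mathrm{s},\lambda}\to\tilde{q}_\infty$ (the entire-plane saddle solution), produce a compactly supported test function with negative Rayleigh quotient against $\tilde{q}_\infty$, and transfer the negativity back to~$\lambda\Omega$ by truncation, locally uniform convergence, and the monotonicity of~$\mu$. The two proofs diverge at the crucial step of producing the negative test function: the paper invokes Schatzman's Lemma~3.4 from~\cite{schatzman} as a black box, whereas you reconstruct a test function explicitly as $\tilde\eta_R(x,y)=\chi_R(x)\chi_R(y)Q'(x)Q'(y)$. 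Your supersolution identity and the closed-form arithmetic leading to $-B^8/(900\,C^6 L^2)$ are both correct (I checked them), and the truncation/transfer step is sound.

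The gap is in the pointwise comparison
\[
 0 \;\leq\; \tilde{q}_\infty(x,\,y) \;\leq\; \frac{2C}{B}\,Q(x)\,Q(y) \qquad \textrm{on } (0,\,\infty)^2,
\]
on which the negativity of your potential term depends. You flag it yourself, but the remedy you sketch does not close it. The difficulty is real and two-fold: the barrier and the saddle share the \emph{same} far-field value $B/(2C)$, so an exhaustion-by-bounded-domains argument does not furnish the boundary inequality one needs on the outer faces of a truncated quadrant; and $f'$ changes sign, so the comparison principle for the spectrally shifted operator requires an a~priori knowledge of where $\tilde{q}_\infty$ and the barrier sit relative to the inflection of $f$. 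Moreover, the uniqueness result you propose to lean on (Lemma~\ref{lemma:uniqueness} / \cite[Lemma~1]{fife}) is stated for the \emph{bounded} truncated quadrant; extending it to $(0,\,\infty)^2$ in the absence of decay is itself a non-trivial step you do not carry out. In effect you are re-deriving the substance of Schatzman's Lemma~3.4 rather than citing it, and the re-derivation is incomplete at its central estimate. The cleanest repair — and the paper's choice — is simply to cite~\cite[Lemma~3.4]{schatzman}, which already supplies a function $\eta\in H^2(\Rr^2)$ with $\delta^2$-energy strictly negative against $\tilde{q}_\infty$; the rest of your argument (truncation, local uniform convergence, monotonicity of~$\mu$) then goes through unchanged.
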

\begin{proof}
 The uniform bound~$|q_{\mathrm{s}, \lambda}|\leq B/2C$ and~\cite[Lemma~A.2]{BBH-degreezero}, applied to the Equation~\eqref{AC},
 yield the estimate~$|\nabla q_{\mathrm{s}, \, \lambda}|\leq C \lambda$ for some $\lambda$-independent constant~$C$.
 By scaling, we obtain~$|\tilde{q}_{\mathrm{s}, \lambda}| + |\nabla\tilde{q}_{\mathrm{s}, \lambda}| \leq C$.
 Therefore, by the Ascoli-Arzel\`a theorem, we have the locally uniform convergence $\tilde{q}_{\mathrm{s}, \lambda} \to \tilde{q}_\infty$
 as~$\lambda\to+\infty$, up to a non-relabelled subsequence. Taking the limit in both sides of~\ref{scaled_AC}, we see that~$\tilde{q}_\infty$ 
 is the unique saddle solution of the Allen-Cahn equation~\eqref{scaled_AC} on~$\Rr^2$ (see~\cite{fife}).
 Thanks to~\cite[Lemma~3.4]{schatzman}, we find a function~$\eta\in H^2(\Rr^2)$ such that
 \begin{equation*} 
 \int_{\Rr^2} \left\{ \abs{\nabla\eta}^2  + \frac{1}{L} \left(6C\tilde{q}_\infty^2 - \frac{B^2}{2C}\right)\eta^2 \right\} \,\d A < 0.
 \end{equation*}
 By truncation, we can assume WLOG that~$\eta$ has compact support.
 Then, using the locally uniform convergence~$\tilde{q}_{\mathrm{s}, \lambda}\to\tilde{q}_\infty$,
 we conclude that the right-hand side of~\eqref{mu_rescaled} becomes negative for~$\lambda$ large enough,
 whence~$\mu(\lambda) < 0$ for $\lambda$ large enough. 
\end{proof}

\textit{Proof of Theorem~\ref{th:bifurcation}}
 We know that~$\mu(\lambda) > 0$ for~$0 < \lambda \ll 1$:
 this follows from the stability result for the full LdG system in Lemma~\ref{lem:1}.
 (This can also be proved directly from~\eqref{mu_rescaled}, by applying Poincar\'e inequality.)
 Combining this with Lemma~\ref{lemma:instability}, we find~$\lambdac > 0$ such that $\mu(\lambdac) = 0$.
 Such a $\lambdac$ is unique, because~$\mu$ is strictly decreasing (Lemma~\ref{lemma:monotonicity_mu}).
 
 We revert to the original re-scaled domain, $\Omega$, for the rest of the computation. To show that a pitchfork bifurcation arises at~$\lambda = \lambdac$, we apply the
 Crandall and Rabinowitz bifurcation theorem~\cite[Theorem~1.7]{CrandallRabinowitz} to the map
 $\mathscr{F}\colon\Rr^+\times H^1_0(\Omega)\to H^{-1}(\Omega)$ defined by
 \[
  \mathscr{F}(\lambda, \, h) := - \Delta(\qs + h) + \frac{\lambda^2}{L} f(\qs + h)
 \]
 for any~$(\lambda, \, h)\in\Rr^+\times H^1_0(\Omega)$.
 We first have to check that the assumptions of the theorem are satisfied.
 Clearly, we have~$\mathscr{F}(\lambda, \, 0) = 0$ for any~$\lambda > 0$. The map~$\mathscr{F}$ is smooth and we have
 \[
  \D_h \mathscr{F} (\lambda, \, 0) = - \Delta + \frac{\lambda^2}{L} f^\prime(\qs).
 \]
 This is a Fredholm operator of index~$0$, whose smallest eigenvalue~$\mu(\lambda)$ has multiplicity~$1$.
 Therefore, for~$\lambda = \lambdac$, we have
 \[
  \dim\frac{H^{-1}(\Omega)}{\mathrm{range} \, \D_h \mathscr{F} (\lambdac, \, 0)} 
  = \dim \mathrm{kernel} \, \D_h \mathscr{F} (\lambdac, \, 0) = 1.
 \]
 Let~$\eta_\lambda$ be an eigenfunction associated with~$\mu(\lambda)$ (i.e., a minimiser for~\eqref{mu}), 
 renormalised so that~$\int_\Omega \eta_\lambda^2 = 1$.
 From~\cite[Lemma~1.3]{CrandallRabinowitz-ARMA}, we can assume that the
 map~$\lambda\mapsto\eta_\lambda$ is smooth, at least for~$\lambda$ close enough to~$\lambdac$.
 In order to apply Crandall and Rabinowitz's theorem, we need to check that
 \begin{equation} \label{bifurcation2}
  \D_\lambda\D_h \mathscr{F} (\lambdac, \, 0) [\eta_{\lambdac}] \notin \mathrm{range} \,  \D_h \mathscr{F}(\lambdac, 0).
 \end{equation}
 Proceeding by contradiction, assume that there exists~$h\in H^1_0(\Omega)$ such that
 \[
  \D_\lambda\D_h \mathscr{F} (\lambdac, \, 0) [\eta_{\lambdac}] = \D_h \mathscr{F}(\lambdac, 0)[h].
 \]
 Then, using the fact that~$\D_h \mathscr{F} (\lambda, \, 0)$ is symmetric 
 and~$\eta_{\lambdac}\in\ker\D_h \mathscr{F} (\lambdac, \, 0)$,
 we obtain that
 \begin{equation*}
  \begin{split}
   \mu^\prime(\lambdac) &= \frac{\d}{\d\lambda}_{|\lambda = \lambdac}
   \langle \D_h \mathscr{F} (\lambda, \, 0) [\eta_\lambda], \, \eta_\lambda \rangle \\
   &= \langle \D_\lambda\D_h \mathscr{F} (\lambdac, \, 0) [\eta_{\lambdac}], \, \eta_{\lambdac} \rangle 
   + 2\langle \D_h \mathscr{F} (\lambdac, \, 0) [\eta_{\lambdac}], \, \partial_{\lambda} \eta_{\lambda|\lambda = \lambdac} \rangle \\
   &= \langle \D_h \mathscr{F} (\lambdac, \, 0) [h], \, \eta_{\lambdac} \rangle \\
   &= \langle \D_h \mathscr{F} (\lambdac, \, 0) [\eta_{\lambdac}], \, h \rangle = 0.
  \end{split}
 \end{equation*}
 However, we know  that~$\mu^\prime(\lambdac) < 0$ by Lemma~\ref{lemma:monotonicity_mu}.
 Therefore, we have a contradiction and~\eqref{bifurcation2} cannot hold. 
 Thus, all the assumptions of Crandall and Rabinowitz's theorem are satisfied.
 
 By Crandall and Rabinowitz's theorem, we find positive numbers~$\epsilon$ and~$\delta$
 such that any pair~$(\lambda, \, q)\in\Rr^+\times H^1(\Omega)$ satisfying
 \[
  q \textrm{ is a solution of } \eqref{AC}, \qquad
  |\lambda - \lambdac| \leq \epsilon, \, \qquad 
  \|q - q_{\mathrm{s}, \lambdac}\|_{H^1(\Omega)} \leq \epsilon
 \]
 is either of the form~$(\lambda, \, \qs)$ or 
 \begin{equation} \label{bifurcation1}
  \lambda = \lambda(t), \qquad q = q_{\mathrm{s}, \lambda(t)} + t\eta_{\lambdac} + t^2 h_t,
 \end{equation}
 where~$\lambda(t)\in (\lambdac - \epsilon, \, \lambdac + \epsilon)$ and~$h_t\in H^1(\Omega)$
 are smooth functions of the scalar parameter~$t\in (-\delta, \, \delta)$.
 (The smoothness of $\lambda(t)$, $h_t$ is given by~\cite[Theorem~1.18]{CrandallRabinowitz}.)
 
 To conclude the proof of the theorem, we remark that if~$q$ is a solution of~\eqref{AC}, then so is
 \[
  \bar{q}(x, \, y) := -q(-x, \, y).
 \]
 By construction, the order reconstruction solution satisfies~$\bar{q}_{\mathrm{s}, \lambda(t)} = q_{\mathrm{s}, \lambda(t)}$.
 Moreover, we have $\bar{\eta}_{\lambdac} = -\eta_{\lambdac}$. Indeed, both~$\bar{\eta}_{\lambdac}$
 and~$\eta_{\lambdac}$ are eigenfunctions associated with the same eigenvalue~$\mu(\lambdac)$, 
 which has multiplicity one, therefore~$\bar{\eta}_{\lambdac} = \alpha\eta_{\lambdac}$ for some real number~$\alpha$.
 By taking into account that $\int_\Omega \bar{\eta}^2_{\lambdac} \, \d A = \int_\Omega {\eta}^2_{\lambdac} \, \d A = 1$
 and that~$\eta_{\lambdac}$ vanishes nowhere on~$\Omega$ (by the maximum principle), we deduce that ~$\alpha = -1$.
 Therefore, for a solution $q = q_{\mathrm{s}, \lambda(t)} + t\eta_{\lambdac} + t^2 h_t$ of~\eqref{AC} with~$\lambda = \lambda(t)$,
 there is another associated solution of the same equation, given by
 \[
  \bar{q} = q_{\mathrm{s}, \lambda(t)} - t\eta_{\lambdac} + t^2 \bar{h}_t.
 \]
 This solution is close to the bifurcation point~$(\lambdac, \, q_{\mathrm{s}, \lambdac})$ 
 and does not belong to the branch of OR solutions; therefore, it must be of the form given by~\eqref{bifurcation1}.
 This yields
 \[
  \lambda(-t) = \lambda(t), \qquad h_{-t}(x, \, y) = \bar{h}_t(x, \, y) = -h_t(-x, \, y)
 \]
 and concludes the proof.
\endproof


\begin{corollary} \label{lem:ORLdGinstability}
 The OR LdG critical point~$\Qvec_{\mathrm{s}}$ defined in Lemma~\ref{lem:ORdefn} is unstable for $\lambda > \lambda_{\mathrm{c}}$.
\end{corollary}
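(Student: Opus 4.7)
The plan is to transfer the scalar instability established in Theorem~\ref{th:bifurcation} up to the five-dimensional LdG framework via the ansatz~\eqref{eq:d1}. For $\lambda > \lambdac$, Lemma~\ref{lemma:monotonicity_mu} combined with $\mu(\lambdac) = 0$ forces $\mu(\lambda) < 0$, so there exists $\eta\in H^1_0(\Omega)$, $\eta\not\equiv 0$, with $\delta^2 H_\lambda[\eta] < 0$. I would use this $\eta$ to construct a destabilising perturbation of $\Qvec_{\mathrm{s}}$ in the LdG problem.

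Concretely, I would set $\Vvec := \eta(\nvec_1\otimes\nvec_1 - \nvec_2\otimes\nvec_2)$. Since $\eta$ vanishes on $\partial\Omega$ in the trace sense, $\Vvec\in W^{1,2}_0(\Omega, \, S_0)$ and $\Qvec_{\mathrm{s}} + t\Vvec\in\mathscr{A}$ for every $t\in\Rr$, producing a one-parameter family of admissible competitors of $\Qvec_{\mathrm{s}}$ that match the prescribed boundary data.

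The central step is the identity
\[
 \frac{\d^2}{\d t^2}\bigg|_{t=0} \overline{I}[\Qvec_{\mathrm{s}} + t\Vvec] = \delta^2 H_\lambda[\eta].
\]
To verify it, I would observe that for $\Qvec$'s of the form~\eqref{eq:d1} a direct computation (using the mutual orthogonality of the three symmetric tensors $\nvec_1\otimes\nvec_1 - \nvec_2\otimes\nvec_2$, $\nvec_1\otimes\nvec_2 + \nvec_2\otimes\nvec_1$, and $2\zhat\otimes\zhat - \nvec_1\otimes\nvec_1 - \nvec_2\otimes\nvec_2$) reduces $\overline{I}[\Qvec]$ to the functional $J[q_1, \, q_2, \, q_3]$ defined in~\eqref{eq:d5}. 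Restricting further to the slice $\{q_2 = 0, \, q_3 = -B/(6C)\}$, Proposition~\ref{prop:2} shows that $J$ becomes $H$ plus an additive constant. Since the path $\Qvec_{\mathrm{s}} + t\Vvec$ corresponds exactly to the scalar path $q_{\mathrm{s}} + t\eta$ in this slice, differentiating twice at $t=0$ yields the displayed identity.

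The identity together with $\delta^2 H_\lambda[\eta] < 0$ implies that $\overline{I}$ has strictly negative second variation at $\Qvec_{\mathrm{s}}$ along $\Vvec$, so $\Qvec_{\mathrm{s}}$ is not a local minimiser of $\overline{I}$ in $\mathscr{A}$ and is therefore unstable. I do not anticipate a substantive obstacle here: all of the analytic difficulty has been invested in Theorem~\ref{th:bifurcation}, and the only step requiring care is the reduction of $\overline{I}$ to $H$ along the ansatz slice, which is essentially already contained in the computation justifying Proposition~\ref{prop:2}.
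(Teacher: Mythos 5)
Your argument is the same as the paper's: both construct the destabilising perturbation $\Vvec = \eta\left(\nvec_1\otimes\nvec_1 - \nvec_2\otimes\nvec_2\right)$ with $\eta$ an unstable direction for $\delta^2 H_\lambda$ (available for $\lambda > \lambdac$ by Theorem~\ref{th:bifurcation} and Lemma~\ref{lemma:monotonicity_mu}), and both observe that the second variation of the LdG energy along $\Vvec$ reduces to $\delta^2 H_\lambda[\eta]$. You spell out the reduction of $\overline{I}$ through $J$ to $H$ in a bit more detail than the paper, which merely invokes a ``standard computation,'' but the proof is the same.
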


\begin{proof}
 Consider a perturbation of the form
 \begin{equation} \label{eq:eta}
  \Vvec := \eta \left(\nvec_1 \otimes \nvec_1 - \nvec_2 \otimes \nvec_2 \right).
 \end{equation}
 A standard computation shows that the second variation of the LdG energy about the critical point~$\Qvec_{\mathrm{s}}$ reduces to
 \begin{equation}\label{eq:eta2}
  \delta^2 I[\Vvec] 
  = \int_{\Omega} \left\{ \left|\nabla\eta\right|^2  + \frac{\lambda^2}{L}\left(6C \qs^2 - \frac{B^2}{2C}\right)\eta^2 \right\} \,\d A .
 \end{equation}
 From Theorem~\ref{th:bifurcation} and Lemma~\ref{lemma:monotonicity_mu}, we have that for any $\lambda > \lambda_{\mathrm{c}}$ 
 there exists an admissible $\eta$ (vanishing on $\partial \Omega$) such that $\delta^2 I[\Vvec]<0$ in \eqref{eq:eta2}.
 The conclusion now follows.
\end{proof}

\section{Numerics}
\label{sec:numerics}

In this section, we perform some numerical experiments to study order reconstruction solutions on two
specific two-dimensional regular polygons --- the square and a hexagon, the latter perhaps serving to partially 
illustrate the generic nature of such solutions.

We work with the gradient flow model for nematodynamics in the LdG framework, as this is arguably the simplest model
to study the evolution of solutions without any external effects or fluid flow.
Informally speaking, gradient flow models are dictated by the principle that dynamic solutions evolve along 
a path of decreasing energy, converging to a stable equilibrium for long times \cite{gradientflow}. 
We adopt the standard gradient flow model associated with the LdG energy, described by a system of five coupled 
nonlinear parabolic partial differential equations as shown below:
\begin{equation}
\label{eq:e1}
\gamma \Qvec_t = L \Delta \Qvec - A \Qvec + B \left( \Qvec \Qvec - \frac{|\Qvec|^2}{3}\mathbf{I}\right) - C |\Qvec|^2 \Qvec
\end{equation}
where $\gamma$ is a positive rotational viscosity, $\mathbf{I}$ is the $3 \times 3$ identity matrix
and $A = -\frac{B^2}{3C}$ so that we can make comparisons between the numerics and the analysis above.

We adopt the same scalings as in \cite{front2016} i.e. non-dimensionalize the system by setting
$\bar{t} := \frac{20 t L}{\gamma \lambda^2}, \bar{\rvec} := \frac{\rvec}{\lambda}$ where $\lambda$ 
is a characteristic geometrical length scale to get
\begin{equation}
\label{eq:e2}
 \frac{\partial \Qvec}{\partial \bar{t}} = \bar{\Delta}\Qvec - \frac{\lambda^2}{L}\left(A \Qvec 
 - B \left( \Qvec \Qvec - \frac{|\Qvec|^2}{3}\mathbf{I}\right) + C |\Qvec|^2 \Qvec \right)
\end{equation}
and we drop the bars from all subsequent discussion for brevity.

\subsection{Numerics on a square}
\label{sec:square}

We first take a square centered at the origin with edge length $2 \lambda$ and impose a boundary condition of the form
\begin{equation}
\label{eq:e4}
 \Qvec_{\mathrm{b}, ij} = q \left( \xhat_i \xhat_j - \yhat_i \yhat_j \right) 
 - \frac{B}{6C}\left(2 \zhat_i \zhat_j - \xhat_i \xhat_j - \yhat_i \yhat_j \right)
\end{equation}
where $\xhat, \yhat, \zhat$ are unit-vectors in the $x$, $y$ and $z$-directions respectively and
\begin{equation} \label{eq:e3}
 \begin{aligned}
   & q(x, \, -1) = q (x, \, 1) = \frac{B}{2C} && \textrm{for } -1+\varepsilon \leq x \leq 1-\varepsilon \\ 
   & q(x, \, -1) = q(x, \, 1) = f(x)          && \textrm{otherwise}  \\
   & q(-1, \, y) = q(1, \, y) = -\frac{B}{2C} && \textrm{for } -1+\varepsilon \leq y \leq 1-\varepsilon \\
   & q(-1, \, y) = q(1, \, y) = -f(y)         && \textrm{otherwise,}
 \end{aligned}
\end{equation}
where
\[
 f(s) := \frac{B}{2C} \frac{ 1- |s|}{\varepsilon} \quad \textrm{for } |s|\leq \varepsilon.
\]
Consequently, $q$ is fixed to be zero at the vertices.
We work with a fixed initial condition of the form \eqref{eq:e4} with
\begin{equation} \label{eq:e6}
 \Qvec_0 := q_0 \left( \xhat_i \xhat_j - \yhat_i \yhat_j \right) - \frac{B}{6C}\left(2 \zhat_i \zhat_j - \xhat_i \xhat_j - \yhat_i \yhat_j \right)
\end{equation}
and
\begin{equation} \label{eq:e7}
 q_0(x, \, y) := \begin{cases}
  \dfrac{B}{2C}  & \textrm{for } -y < x < y \\
  -\dfrac{B}{2C} & \textrm{for } -x < y < x,
 \end{cases}
\end{equation}
such that $q_0 = 0$ on the diagonals $x=\pm y$.
In other words, $q_0$ mimics the saddle-type order reconstruction solution studied above
in the sense that the initial condition, $\Qvec_0$, has a constant eigenframe with an uniaxial cross,
that has negative order parameter and connects the four square vertices.

For a boundary condition and an initial condition of the form \eqref{eq:e4}--\eqref{eq:e7}, 
there is a dynamic solution, $\Qvec(\rvec, \, t)$, of the system \eqref{eq:e2} given by
\begin{equation}
\label{eq:e8}
 \Qvec(\rvec, \, t): = q(x, \, y, \, t) \left( \xhat_i \xhat_j - \yhat_i \yhat_j \right) 
 - \frac{B}{6C}\left(2 \zhat_i \zhat_j - \xhat_i \xhat_j - \yhat_i \yhat_j \right)
\end{equation} 
where the evolution of $q$ is governed by
\begin{equation}
\label{eq:e9}
\frac{\partial q}{\partial t} = \Delta q - \frac{2C \lambda^2}{L}q \left( q -  \frac{B}{2C} \right) \left( q + \frac{B}{2C} \right).
\end{equation}
In what follows, we solve the evolution equation \eqref{eq:e9} on a re-scaled square (with vertices 
at $(-1,\, -1), (-1, \, 1), (1, \, -1), (1,\, 1)$ respectively) for different values of $\lambda$. 
We use a standard finite-difference method for the spatial derivatives and the Runge-Kutta scheme for time-stepping 
in the numerical simulations on a square (also see \cite{front2016} for more discussion on numerical methods).
We expect to see that $q=0$ along $x=\pm y$ for small values of $\lambda$, since the order reconstruction solution 
is the unique LdG critical point for small $\lambda$ and we expect to see transition layers near a pair of opposite edges 
for large $\lambda$, as suggested by
Proposition~\ref{prop:3}.

Let $\bar{\lambda}^2 :=\frac{2C \lambda^2}{L}$. We let $B = 0.64 \times 10^4 \mathrm{Nm}^{-2}$, $C=0.35 \times 10^4 \mathrm{Nm}^{-2}$
throughout this section \cite{newtonmottram}. In Figures~\ref{fig:1}, \ref{fig:2}, we solve the evolution equation \eqref{eq:e9},
subject to the Dirichlet condition \eqref{eq:e3} and the initial condition, $q(x, \, y, \, 0) = q_0(x, \, y)$ 
where $q_0$ is defined in \eqref{eq:e7}, for $\bar{\lambda}^2 = 0.05$ and $\bar{\lambda}^2 = 200$ respectively. 
For $\bar{\lambda}^2 = 0.05$, the scalar profile relaxes the sharp transition layers at $x=\pm y$ but retains
the vanishing diagonal cross with $q(x,\pm x, t) = 0$ for all times. The corresponding dynamic solution, $\Qvec(\rvec, t)$ in \eqref{eq:e8}, 
has an uniaxial cross with negative order parameter, connecting the four square vertices, consistent with the stability and uniqueness 
results for the saddle-type order reconstruction solution studied in Sections~\ref{sec:OR} and \ref{sec:ORinstability}. For large values of 
$\bar{\lambda}^2 =200$, the initial condition has the diagonal cross but the diagonal cross rapidly relaxes into a pair of transition 
layers, one layer being localized near $x=-1$ (or $y=-1$) and the other layer being localized near $x=+1$ (or $y=+1$); see 
Figure~\ref{fig:2}. In Figure~\ref{fig:3}, we plot $q(0, \, 0)$ - the value of the converged solution at the origin as a function of 
$\bar{\lambda}^2$. It is clear that we have lost the diagonal cross if $q(0, \, 0) \neq 0$ and hence, 
one might reasonably deduce that the order reconstruction solution loses stability for values of $\bar{\lambda}^2$ for which
$q(0, \, 0) \neq 0$. In Figure~\ref{fig:3}, we see that $q(0, \, 0)=0$ for $\bar{\lambda}^2 \leq 9.2$ and $q(0, \, 0) \neq 0$ 
for $\bar{\lambda}^2 > 9.2$. The picture is consistent with a supercritical pitchfork bifurcation as established in 
Theorem~\ref{th:bifurcation}. In other words, we expect the order reconstruction solution to lose stability on a square domain with edge 
length $2 \lambda$ and for which
\begin{equation}
\label{eq:e10}
 \lambda^2 > 5\frac{L}{C}.
\end{equation}

\begin{figure}[h!]
	\centering
 	\centerline{\includegraphics[width=5cm]{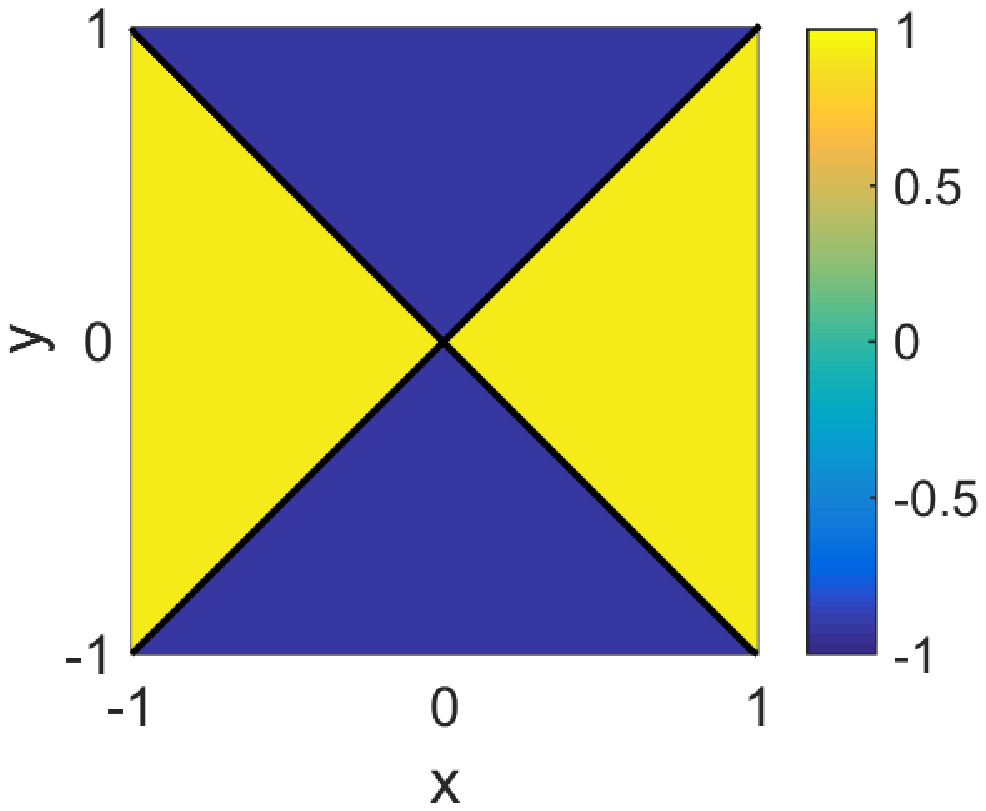}\includegraphics[width=5cm]{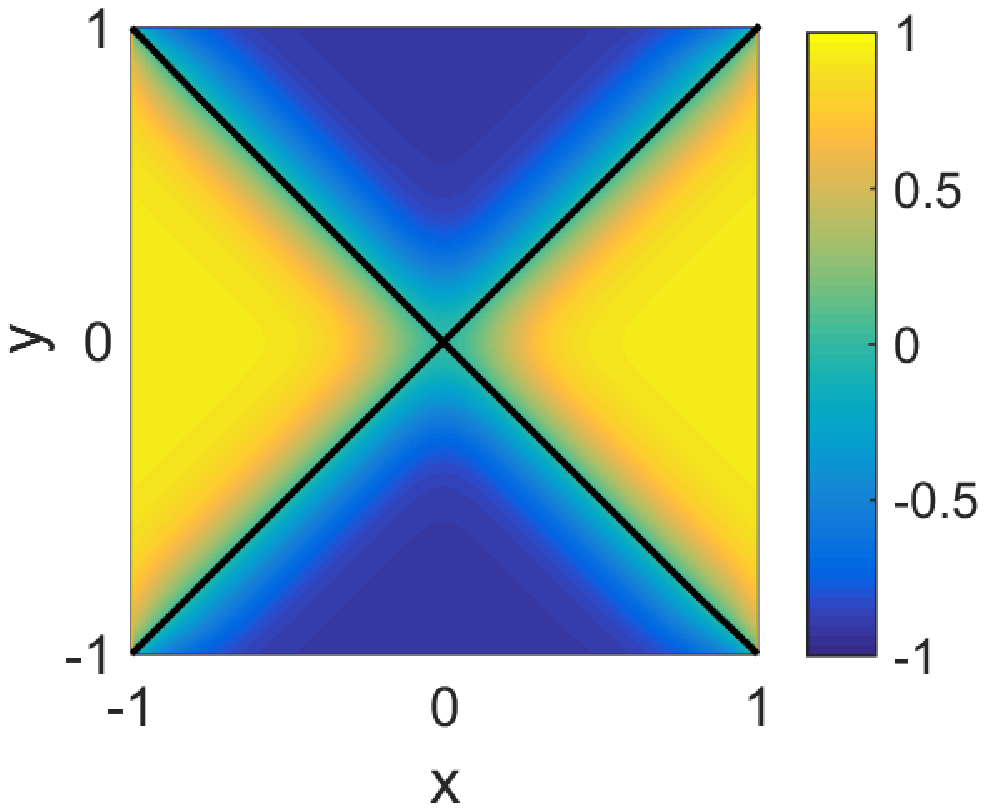}\includegraphics[width=5cm]{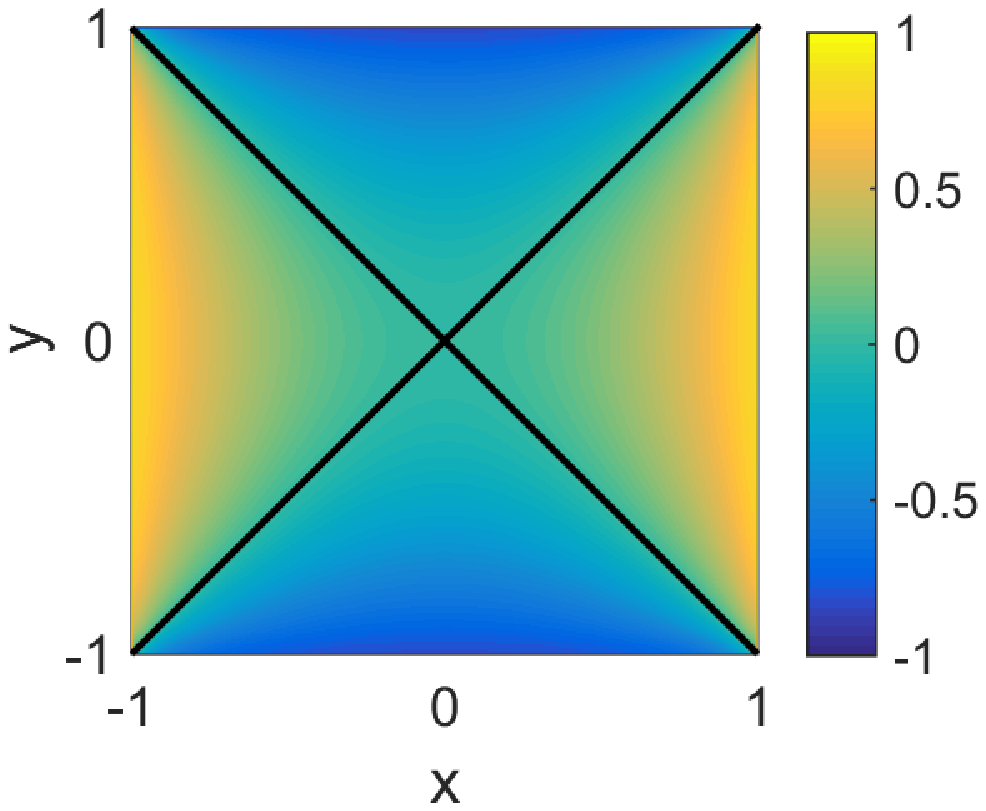}}
	\caption{$q(x,\,y,\,t)$ for $t=0$, $t=0.01$ and $t=2$ for $\lambda^2=0.05$.}
	\label{fig:1}	
\end{figure}

\begin{figure}[h!]
	\centering
 	\centerline{\includegraphics[width=5cm]{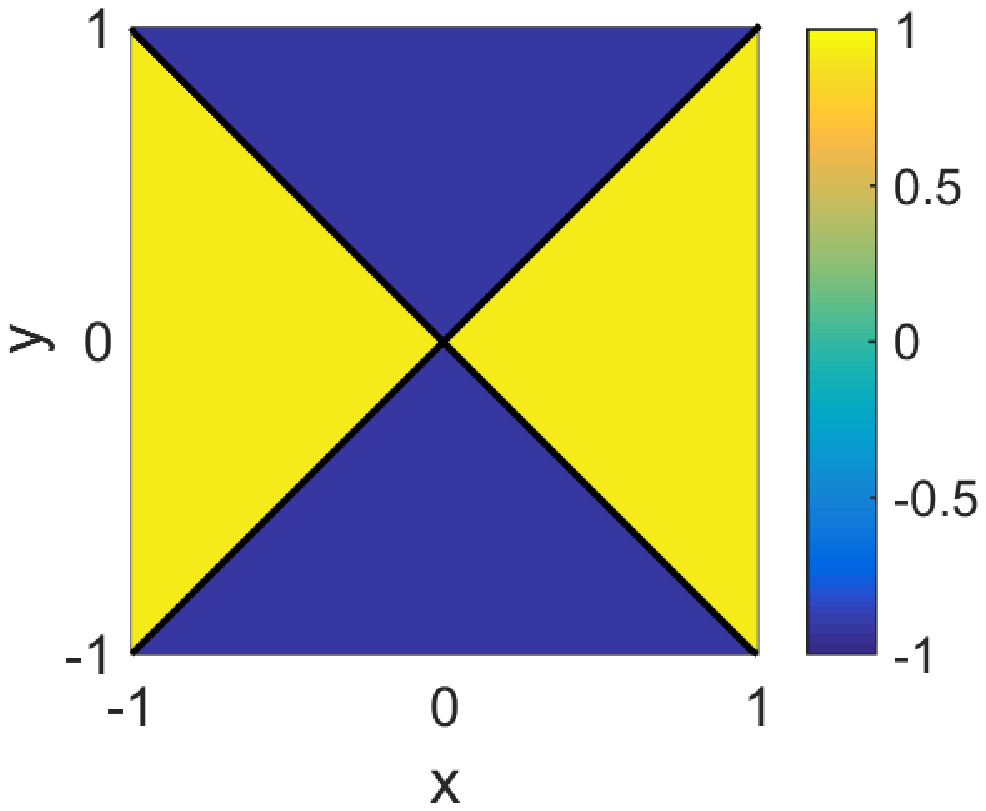}\includegraphics[width=5cm]{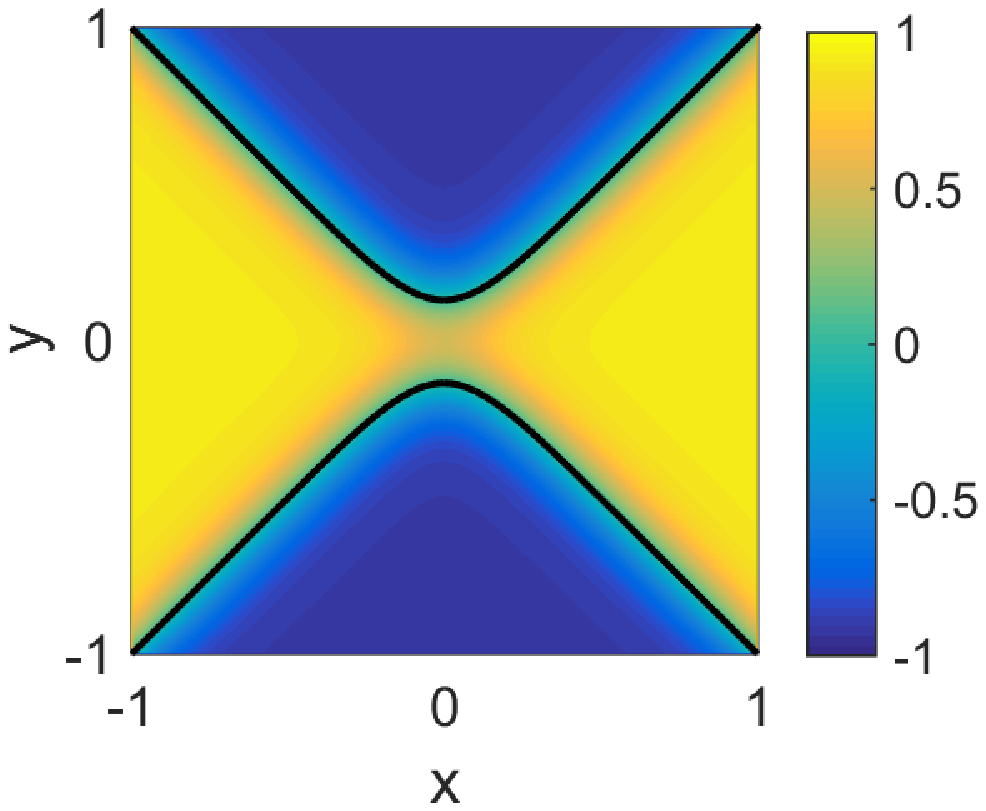}\includegraphics[width=5cm]{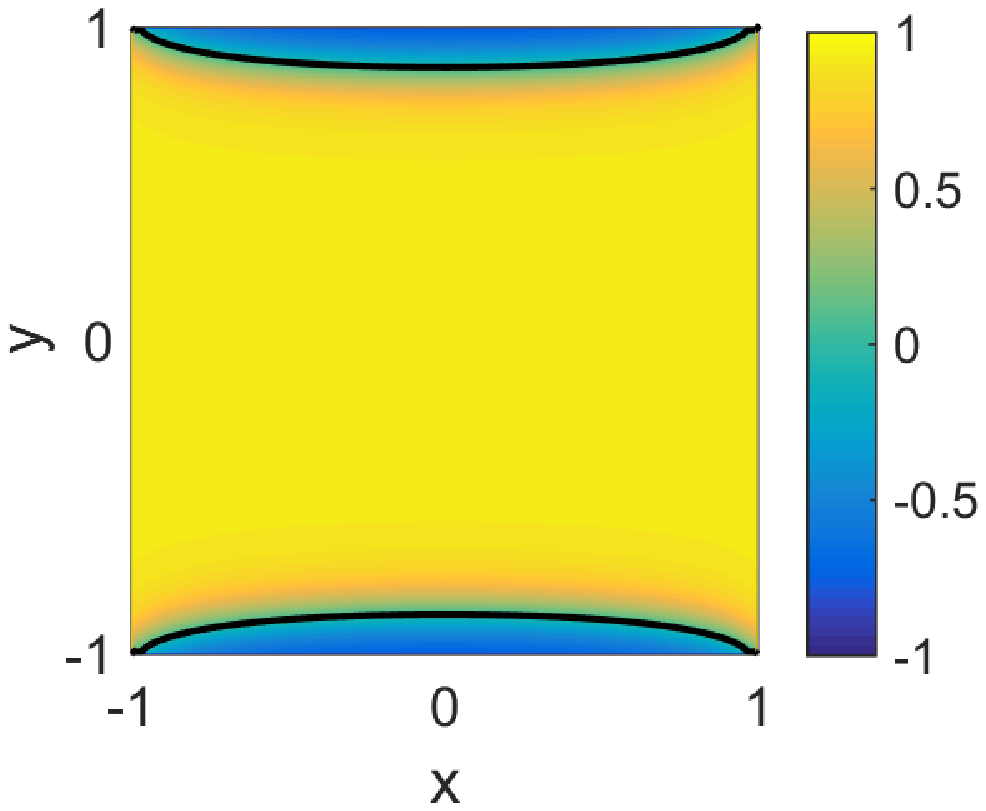}}
	\caption{$q(x,\,y,\,t)$ for $t=0$, $t=0.5$ and $t=2$ for $\lambda^2=200$.}
	\label{fig:2}
\end{figure}

\begin{figure}[h!]
	\centering
 	\centerline{\includegraphics[width=6cm]{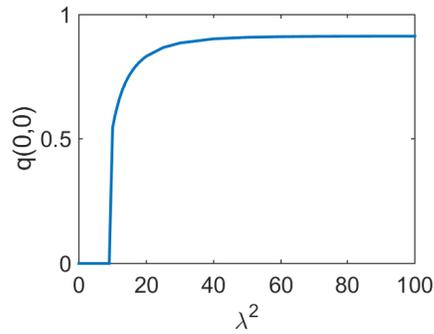}}
	\caption{$q(0,\,0)$ for the steady solution as $\lambda^2$ varies, the critical value is $\lambda^2=9.2$.}
	\label{fig:3}
\end{figure}

\subsection{Order Reconstruction on a Hexagon: Analysis and Numerics}
\label{sec:hexagon}

In this section, we look for OR type solutions on a regular hexagon at the fixed temperature, $A =-\frac{B^2}{3C}$ as before.
We interpret OR solutions loosely i.e. we look for critical points of the LdG energy which have an interior ring of maximal biaxiality 
inside the hexagon. Let~$H$ be a regular hexagon, centered at the origin with vertices $(1, \, 0)$, $(1/2, \, \sqrt{3}/2)$, 
$(-1/2, \, \sqrt{3}/2)$, $(-1, \, 0)$, $(-1/2, \, -\sqrt{3}/2)$, $(1/2, \, -\sqrt{3}/2)$.
We take our domain~$\Omega$ to be the set of points~$(x, \, y)$ in the interior of~$H$ that satisfy the inequalities
\[
 |x| < 1 - \varepsilon, \quad \frac12 \abs{x + \sqrt{3} y} < 1 - \varepsilon,
 \quad \frac12 \abs{x - \sqrt{3} y} < 1 - \varepsilon.
\]
The domain~$\Omega$ is a truncated hexagon (see Figure~\ref{fig:hexagon}) and has the same set of symmetries as the original hexagon~$H$, that is,
\begin{equation} \label{D6}
 \left\{S\in \mathrm{O}(2)\colon S\Omega\subseteq\Omega \right\} = 
 \left\{S\in \mathrm{O}(2)\colon S H\subseteq H \right\} =: D_6.
\end{equation}
The set of symmetries~$D_6$ consists of six reflection symmetries about the symmetry axes of the hexagon, 
and six rotations of angles~$k\pi/3$ for~$k\in\{0, \, \ldots, \, 5\}$.
We label the ``long'' edges of~$\partial\Omega$, that is the edges common with~$\partial H$, as~$C_1, \, \ldots, \, C_6$.
The edges are labelled counterclockwise, starting from~$(1, \, 0)$. 
\begin{figure}[t]
	\centering
 	\includegraphics[height=6cm]{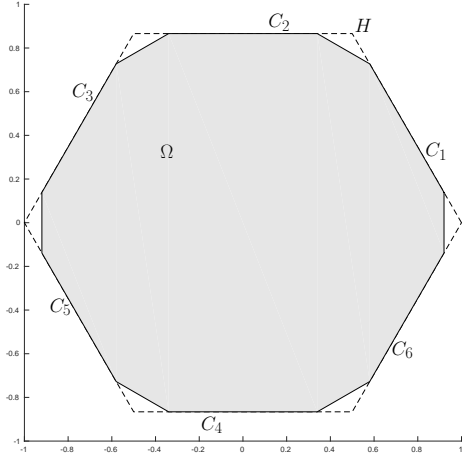}
	\caption{The `truncated hexagon'~$\Omega$. A regular hexagon~$H$ is also plotted, in dashed lines.}
	\label{fig:hexagon}
\end{figure}
On the $C_i$'s, we impose Dirichlet boundary conditions
\begin{equation} \label{eq:bc_hexagon}
 \Qvec(\rvec) = \Qvec_{\mathrm{b}}(\rvec) := 
 \frac{B}{C} \left( \nvec_{\mathrm{b}}(\rvec)\otimes\nvec_{\mathrm{b}}(\rvec) - \frac{\mathbf{I}}{3}\right),
\end{equation}
where~$\nvec_{\mathrm{b}}$ is a tangent unit vector field to~$\partial H$, i.e.
\begin{equation}
\label{eq:hb}
 \nvec_{\mathrm{b}}(\rvec) := \begin{cases}
                        (-1/2, \, \sqrt{3}/2, \, 0)  & \textrm{if } \rvec\in C_1\cup C_4\\
                        (-1, \, 0, \, 0)             & \textrm{if } \rvec\in C_2\cup C_5\\
                        (-1/2, \, -\sqrt{3}/2, \, 0) & \textrm{if } \rvec\in C_3\cup C_6.
                       \end{cases}
\end{equation}
We also impose Dirichlet boundary conditions on the ``short'' edges of~$\partial\Omega$.
For instance, on the short edge connecting the vertices~$(1 - \varepsilon, \, \sqrt{3}\varepsilon)$,
$(1 - \varepsilon, - \sqrt{3}\varepsilon)$, we define
\[
 \Qvec_{\mathrm{b}}(x, \, y) :=
 \frac{B}{C} \left( \nvec_{\mathrm{b}}(x, \, y)\otimes\nvec_{\mathrm{b}}(x, \, y) - \dfrac{\mathbf{I}}{3}\right)
\]
for
\[
 \nvec_{\mathrm{b}}(x, \, y) := \frac{2\varepsilon}{\sqrt{\varepsilon^2 + y^2}}
 \left(-\frac{1}{2}, \, \frac{y}{2\varepsilon}, \, 0\right).
\]
We extend the boundary datum~$\Qvec_{\mathrm{b}}$ to the other short edges by successive rotations of~$\pi/3$.
By construction, the boundary datum is consistent with the symmetries of the hexagon.

We look for critical points of the Landau-de Gennes energy~\eqref{eq:rescaled} on~$\Omega$, such that
(i)~the corresponding $\Qvec$-tensor has $\hat{\mathbf{z}}$ as an eigenvector with constant eigenvalue $-\frac{B}{3C}$, and
(ii) the origin is a uniaxial point with negative scalar order parameter. The long edges are subject to a uniaxial Dirichlet condition 
with positive order parameter (see \eqref{eq:bc_hexagon} and \eqref{eq:hb}) and it is reasonable to expect a ring of maximal biaxiality
separating the central uniaxial point with negative order parameter from the positively ordered uniaxial Dirichlet conditions,
as will be corroborated by the numerics below.

In view of~(i), we look for critical points of the form
\begin{equation} \label{QP}
 \Qvec(\rvec) = \left( \begin{array}{c|c} \mathbf{P}(\rvec) + \dfrac{B}{6C}\mathbf{I}_2 & \begin{matrix} 0 \\ 0 \end{matrix} \\ \hline
                                          \begin{matrix} 0 \qquad & 0 \end{matrix} & -{B}/{3C} \\
                       \end{array} \right),
\end{equation}
where~$\mathbf{P}(\rvec)$ is a $2\times 2$, symmetric and traceless matrix, while~$\mathbf{I}_2$ is the $2\times 2$ identity matrix.
The condition (ii) translates to $\mathbf{P}(0, \, 0) = 0$.
By substitution, we see that~$\Qvec$ is a critical point for the Landau-de Gennes energy~\eqref{eq:rescaled}
if~$\mathbf{P}$ is a solution of the system
\begin{equation} \label{EL_2D}
 \Delta \mathbf{P} = \frac{\lambda^2}{L} \left\{-\frac{B^2}{2C} \mathbf{P} 
 - B \left(\mathbf{P}\mathbf{P} - \frac{\mathbf{I}_2}{2}|\mathbf{P}|^2\right) - C |\mathbf{P}|^2\mathbf{P} \right\}
\end{equation}
or, equivalently, a critical point of the functional
\begin{equation} \label{LdG_2D}
 F[\mathbf{P}] := \int_{\Omega} \left\{ \frac12 \left|\nabla\mathbf{P}\right|^2 
 + \frac{\lambda^2}{L} \left(-\frac{B^2}{4C} \tr\mathbf{P}^2 
 - \frac{B}{3} \tr\mathbf{P}^3 + \frac{C}{4} (\tr\mathbf{P}^2)^2\right) \right\} \, \d A.
\end{equation}
Let~$\mathbf{P}_{\mathrm{b}}$ denote the boundary datum for~$\mathbf{P}$,
which is related to~$\Qvec_{\mathrm{b}}$ via the change of variable~\eqref{QP}.

\begin{lemma} \label{lemma:hexagon}
 For any positive value of~$\lambda$, there exists a critical 
 point~$\mathbf{P}_{\mathrm{s}}\in C^2(\Omega)\cap C^0(\overline\Omega)$ of~\eqref{LdG_2D}
 which satisfy the boundary condition~$\mathbf{P}_{\mathrm{s}} = \mathbf{P}_{\mathrm{b}}$ on~$\partial\Omega$ 
 and~$\mathbf{P}_{\mathrm{s}}(0, \, 0) = 0$.
\end{lemma}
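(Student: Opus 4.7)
The plan is to invoke Palais' principle of symmetric criticality~\cite{palais, lamy2014}, exploiting the dihedral symmetry $D_6$ of $\Omega$ recorded in~\eqref{D6}. Write $\mathrm{Sym}_0$ for the two-dimensional real vector space of symmetric, traceless $2\times 2$ real matrices. For $S\in D_6\subseteq\mathrm{O}(2)$, define an action on matrix fields $\mathbf{P}\colon\Omega\to\mathrm{Sym}_0$ by
\[
 (S\cdot\mathbf{P})(\rvec) := S\,\mathbf{P}(S^{-1}\rvec)\, S^T.
\]
Since each $S\in D_6$ is orthogonal, this action is isometric on $W^{1,2}(\Omega, \, \mathrm{Sym}_0)$ and leaves $F$ invariant. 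First I would verify that the Dirichlet datum $\mathbf{P}_{\mathrm{b}}$ is $D_6$-equivariant, i.e.\ $S\cdot\mathbf{P}_{\mathrm{b}}=\mathbf{P}_{\mathrm{b}}$ on $\partial\Omega$; this reduces to the identity $\nvec_{\mathrm{b}}(S\rvec)=\pm S\,\nvec_{\mathrm{b}}(\rvec)$, which follows from~\eqref{eq:hb} and the analogous prescription on the short edges by successive $\pi/3$-rotations.

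Consider then the closed, $D_6$-invariant affine subset
\[
 \mathscr{A}^{\mathrm{sym}} := \bigl\{\mathbf{P}\in W^{1,2}(\Omega,\,\mathrm{Sym}_0)\colon
 \mathbf{P}=\mathbf{P}_{\mathrm{b}}\textrm{ on }\partial\Omega,\ S\cdot\mathbf{P}=\mathbf{P}\textrm{ for all }S\in D_6\bigr\}.
\]
It is non-empty: one obtains a $D_6$-equivariant extension of $\mathbf{P}_{\mathrm{b}}$ by extending on a fundamental sector and symmetrising via the $D_6$-action with a symmetric partition of unity. The functional $F$ is coercive and weakly lower semi-continuous on $W^{1,2}$, thanks to the dominant quartic term $\tfrac{C}{4}(\tr\mathbf{P}^2)^2$; in fact, the Cayley--Hamilton identity gives $\mathbf{P}^2 = \tfrac12|\mathbf{P}|^2\mathbf{I}_2$ for every $2\times 2$ symmetric traceless $\mathbf{P}$, so $\tr\mathbf{P}^3\equiv 0$ and the cubic term in~\eqref{LdG_2D} drops out altogether, leaving a standard Ginzburg--Landau double-well density. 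The direct method in the calculus of variations yields a minimiser $\mathbf{P}_{\mathrm{s}}\in\mathscr{A}^{\mathrm{sym}}$.

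By Palais' principle, $\mathbf{P}_{\mathrm{s}}$ is a critical point of $F$ on the full admissible class, hence a weak solution of~\eqref{EL_2D}; standard elliptic regularity on the convex polygon $\Omega$ (see~\cite{Grisvard}), together with continuity of $\mathbf{P}_{\mathrm{b}}$, upgrades it to $\mathbf{P}_{\mathrm{s}}\in C^2(\Omega)\cap C^0(\overline\Omega)$. Finally, the origin is fixed by every $S\in D_6$, so $\mathbf{P}_{\mathrm{s}}(0, \, 0) = S\,\mathbf{P}_{\mathrm{s}}(0, \, 0)\,S^T$ for each such $S$. Identifying $\mathrm{Sym}_0$ with $\Rr^2$ through the coefficients of $\bigl(\begin{smallmatrix} a & b \\ b & -a \end{smallmatrix}\bigr)$, a rotation $R_\theta\in\mathrm{SO}(2)$ induces on $\mathrm{Sym}_0$ a rotation by $2\theta$; in particular $R_{\pi/3}$ acts on $\mathrm{Sym}_0$ as $R_{2\pi/3}$, whose only fixed vector is the origin. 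This forces $\mathbf{P}_{\mathrm{s}}(0, \, 0)=0$.

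The main obstacle is the symmetry set-up: one must carefully verify that $\mathbf{P}_{\mathrm{b}}$ is genuinely $D_6$-equivariant (so that $\mathscr{A}^{\mathrm{sym}}$ is non-empty) and that the induced action on $W^{1,2}$ is isometric in the sense required by Palais. Once these bookkeeping points are settled, the direct-method minimisation is routine and the purely algebraic observation that $R_{\pi/3}$ has no non-zero fixed vector on the two-dimensional target space $\mathrm{Sym}_0$ delivers the vanishing at the origin with no further work.
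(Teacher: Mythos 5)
Your proposal is correct and follows essentially the same approach as the paper: minimize $F$ over the $D_6$-equivariant admissible class, invoke Palais' principle of symmetric criticality to conclude that the constrained minimiser is a genuine critical point of $F$, apply elliptic regularity for smoothness, and exploit $D_6$-equivariance at the origin to force $\mathbf{P}_{\mathrm{s}}(0,\,0)=0$. You add two helpful details that the paper leaves implicit: the observation that $\tr\mathbf{P}^3\equiv 0$ for $2\times 2$ symmetric traceless matrices (so the cubic term in~\eqref{LdG_2D} is vacuous), and the explicit check that conjugation by $R_{\pi/3}$ acts as $R_{2\pi/3}$ on the two-dimensional target, whose only fixed vector is zero.
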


The corresponding~$\Qvec$-tensor, denoted by $\Qvec_{\mathrm{s}}$, is then related to~$\mathbf{P}_{\mathrm{s}}$
via the change of variable~\eqref{QP}, 
and is a critical point of the Landau-de Gennes energy (by construction) with the two desired properties, (i) and (ii) stated above. 

\textit{Proof of Lemma~\ref{lemma:hexagon}.}
 Let~$\mathscr{A}$ be the class of admissible configurations, i.e. maps~$\mathbf{P}\in W^{1, 2}(\Omega, \, S_0^{2\times 2})$ 
 that satisfy the boundary condition~$\mathbf{P} = \mathbf{P}_{\mathrm{b}}$ on~$\partial\Omega$.
 Let~$\mathscr{A}_{\mathrm{sym}}$ be the class of admissible configurations that are consistent with the symmetries of the hexagon i.e.
 $\mathscr{A}_{\mathrm{sym}}$ is the set of maps~$\mathbf{P}\in\mathscr{A}$ that satisfy 
 \begin{equation} \label{symmetry}
  \mathbf{P}(\rvec) = S \mathbf{P}(S^{\mathsf{T}}\rvec) S^{\mathsf{T}}
 \end{equation}
 for a.e.~$\rvec\in\Omega$ and any matrix~$S\in D_6$. Here~$D_6$ is the group of symmetries of the hexagon defined by~\eqref{D6}.
 Recall that the boundary datum~$\mathbf{P}_{\mathrm{b}}$ satisfies~\eqref{symmetry} by construction, 
 so the set~$\mathscr{A}_{\mathrm{sym}}$ is non-empty.
 By a standard application of the Direct Method of the Calculus of Variations, we can prove the existence of a
 minimiser~$\mathbf{P}_{\mathrm{s}}$ for the energy~$F$ given by~\eqref{LdG_2D},
 in the class~$\mathscr{A}_{\mathrm{sym}}$.
 
 Clearly,~$\mathbf{P}_{\mathrm{s}}$ is a critical point for~$F$ restricted to~$\mathscr{A}_{\mathrm{sym}}$, 
 but we do not know if it is a critical point for~$F$ in~$\mathscr A$, hence a solution of the Euler-Lagrange system~\eqref{EL_2D}.
 However, the right-hand side of formula~\eqref{symmetry} defines an isometric action of the group~$D_6$ 
 on the space of admissible maps~$\mathscr{A}$,
 and the energy~$F$ is invariant with respect to this action. Therefore, we can apply Palais' principle of symmetric
 criticality~\cite[Theorem p.~23]{palais} to conclude that critical points of $F$ in the restricted space $\mathscr{A}_{\mathrm{sym}}$ exist as critical points in the space $\mathscr{A}$.
 We conclude that~$\mathbf{P}_{\mathrm{s}}$ is a critical point of~$F$ in~$\mathscr{A}$,
 i.e. a solution of~\eqref{EL_2D}.
 By elliptic regularity, we obtain that~$\mathbf{P}_{\mathrm{s}}\in C^2(\Omega)\cap C^0(\overline\Omega)$.
 Finally, we evaluate~\eqref{symmetry} at the point~$\rvec = (0, \, 0)$  to obtain that
 \[
  \mathbf{P}(0, \, 0) = S \mathbf{P}(0, \, 0) S^{\mathsf{T}} \qquad \textrm{for any } S\in D_6,
 \]
which necessarily requires that~$\mathbf{P}_{\mathrm{s}}(0, \, 0) = 0$ as stated.
\endproof

Next, we perform numerical experiments on a regular hexagon of edge length $\lambda$, with the gradient flow model \eqref{eq:e2}, 
to investigate the stability of the OR-type critical point constructed in Lemma~\ref{lemma:hexagon}. 
We re-scale the spatial coordinates by $\bar{\rvec}_i = \frac{\rvec_i}{\lambda}$ as above and solve the system of five coupled 
partial differential equations for $Q_{ij}$, $i,j=1,2,3$ with different values of $\bar{\lambda}^2 :=\frac{\lambda^2}{L}$ 
at $A = -\frac{B^2}{3C}$.

We impose Dirichlet conditions on all six edges of the form
\begin{equation}
\label{eq:hb2}
\Qvec_{\mathrm{b}}(x, \, y) = \frac{B}{C}\left(\nvec_{\mathrm{b}} \otimes \nvec_{\mathrm{b}} - \frac{\mathbf{I}}{3} \right)
\end{equation}
and there are discontinuities at the vertices.
The choice of $\nvec_{\mathrm{b}}$ is dictated by the tangent unit-vector to the edge in question i.e. see \eqref{eq:hb},
and at a given vertex, we fix $\Qvec_{\mathrm{b}}$ to be the average of the two intersecting edges.

We impose an initial condition which divides the hexagon into six regions, which are three alternating constant uniaxial states,
as demonstrated in Figure~\ref{fig:initial}. This initial condition is not well defined at the origin but this does not pose to be
a problem for the numerics. We look for solutions which have $\zhat$ as an eigenvector and have a uniaxial point at the origin with
negative order parameter. This translates to (i)~$Q_{33} = -\frac{B}{3C}$ everywhere, (ii) $Q_{13} = Q_{23}=0$ everywhere,
(ii) $Q_{11}=Q_{22}=\frac{B}{6C}$ at the origin and (iii) $Q_{12}=0$ at the origin. 

\begin{figure}[h!]
	\centering
 	\centerline{\includegraphics[width=5cm]{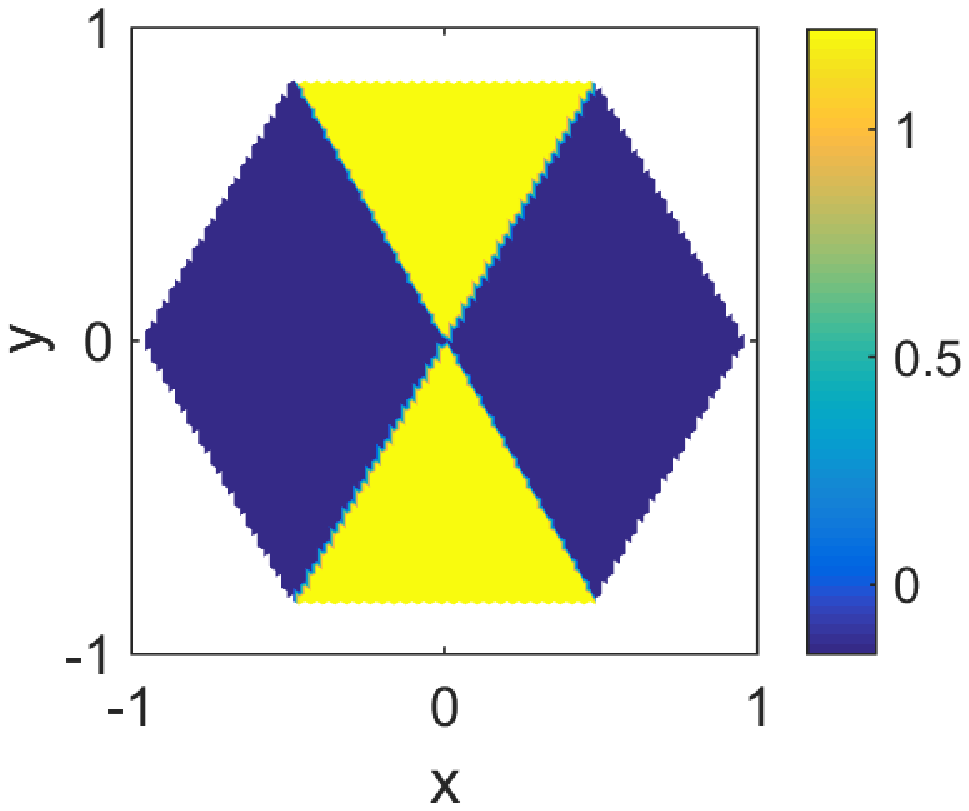}\includegraphics[width=5cm]{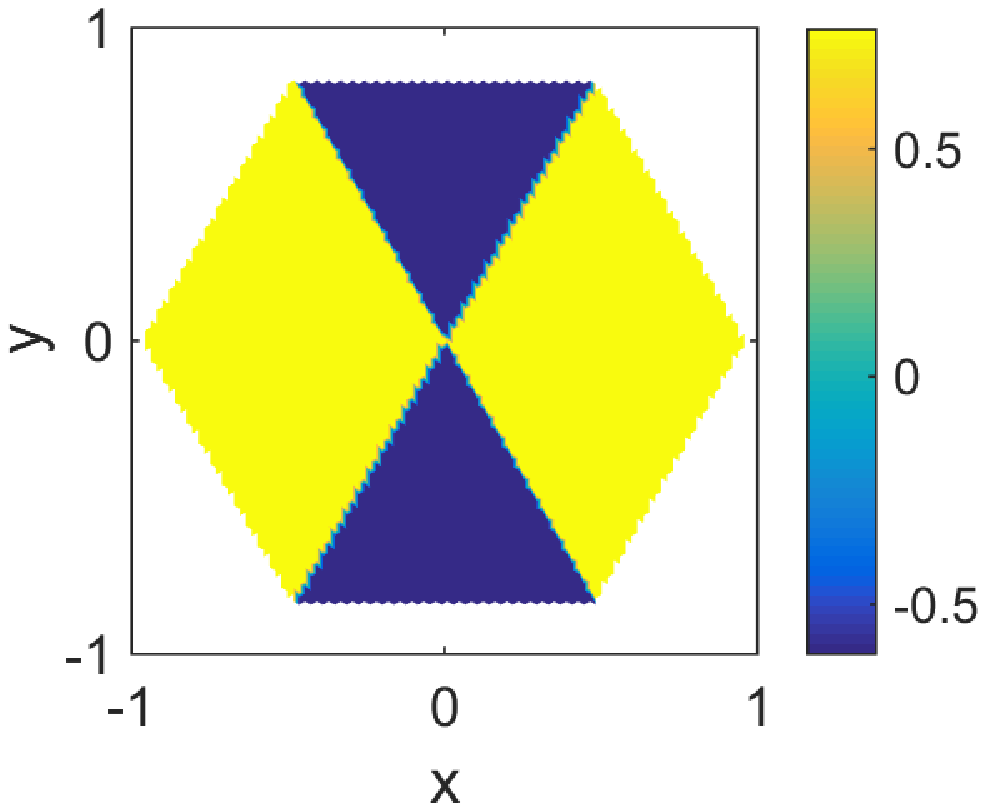}\includegraphics[width=5cm]{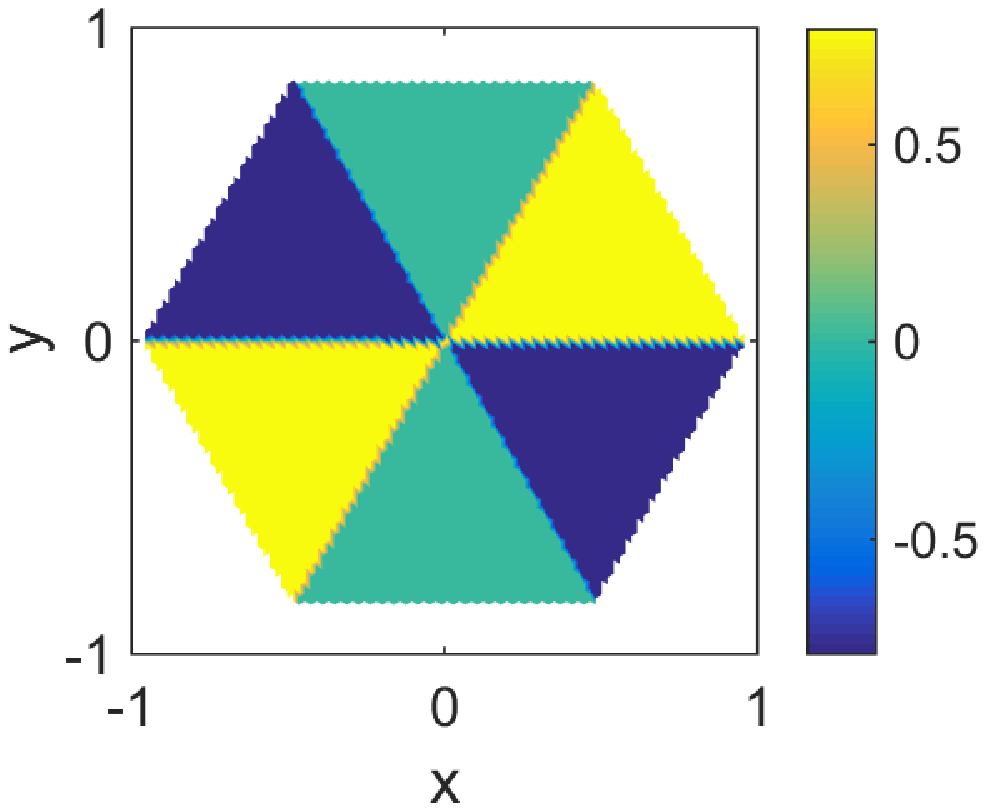}}
	\caption{$Q_{11}$, $Q_{22}$ and $Q_{12}$ for $t=0$.}
	\label{fig:initial}
\end{figure}

We solve the gradient-flow system on $H$ with $\bar{\lambda}^2 = 10^{-6}$, with the fixed Dirichlet condition and
initial condition as described above.
In Figures~\ref{fig:h1},\ref{fig:h2}, we plot $Q_{12}, Q_{11}, Q_{22}$ of the converged solution and see that the origin
is indeed an uniaxial point with negative scalar order parameter i.e. the dynamic solution at the origin is given by
\begin{equation}
 \Qvec(0, \, 0, \, t) = -\frac{B}{2C}\left(\zhat\otimes\zhat - \frac{\mathbf{I}}{3} \right)
\end{equation}
for large times. Further, we numerically verify that
\begin{equation}
\label{eq:Q33}
 Q_{33}(\rvec, \, t)= -\frac{B}{3C}, \qquad Q_{13}(\rvec, \, t) =Q_{23}(\rvec, \, t) = 0 
\end{equation}
for all times~$t$, so that $\zhat$ is indeed an eigenvector with constant eigenvalue.

In Figure~\ref{fig:h3}, we plot the biaxiality parameter, $\beta^2$, of the converged solution
\[
 \beta^2 = 1 - \frac{\left(\textrm{tr}\Qvec^3\right)^2}{|\Qvec|^6} \in \left[0, 1 \right]
\]
at $\bar{\lambda}^2 = 10^{-6}$ and see a distinct ring of maximal biaxiality (with $\beta^2=1$ 
such that~$\Qvec$ has a zero eigenvalue) around the origin, hence yielding an OR-type solution on a regular hexagon.

In Figures~\ref{fig:h4}, \ref{fig:h5}, \ref{fig:h6}, we plot the components of the converged solutions at the origin
as a function of $\bar{\lambda}^2$. We see that \eqref{eq:Q33} holds for all $\lambda^2$ so that $\zhat$ is always an eigenvector. 
Further, the converged solution respects $Q_{12}=0, Q_{11}=Q_{22}=\frac{B}{6C}$ at the origin for $\bar{\lambda}^2 \leq 0.002$
and hence, we have an uniaxial point with negative order parameter at the origin for $\bar{\lambda}^2 \leq 0.002$. 
The numerics suggest that we have an OR-type solution on a regular hexagon for $\bar{\lambda}^2 \leq 0.002$, 
which loses stability for larger values of $\bar{\lambda}^2$. The qualitative trends are the same as those observed on a regular square.

We can compare the critical value on a hexagon with that obtained on a square, see \eqref{eq:e10}.
Our numerics suggest that an OR-type solution is locally stable on a regular hexagon of edge length $\lambda$ for
\begin{equation}
\label{hs}
\lambda^2 < 7 \frac{L}{C}.
\end{equation}

\begin{figure}[h!]
	\centering
 	\includegraphics[width=5cm]{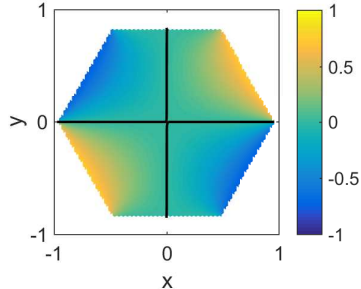}\\ 
	\caption{$Q_{12}$ with contours at level 0 for $\lambda^2=10^{-6}$ and $t=2$.}
	\label{fig:h1}
\end{figure}

\begin{figure}[h!]
	\centering
 	\includegraphics[width=5cm]{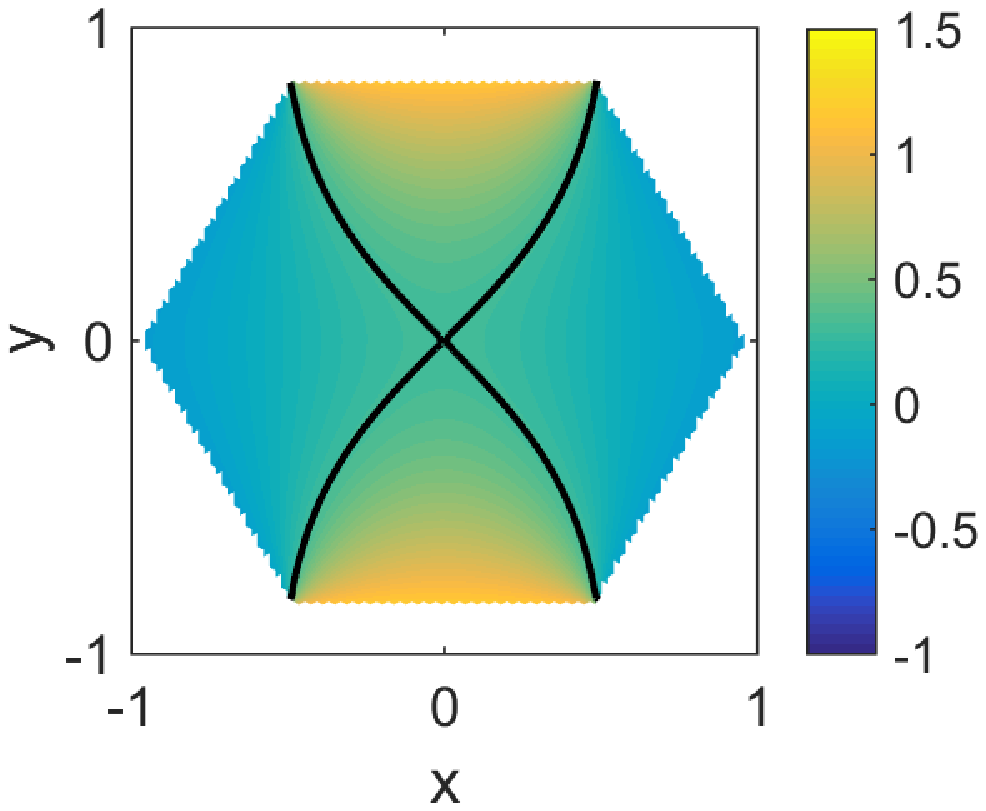}\includegraphics[width=5cm]{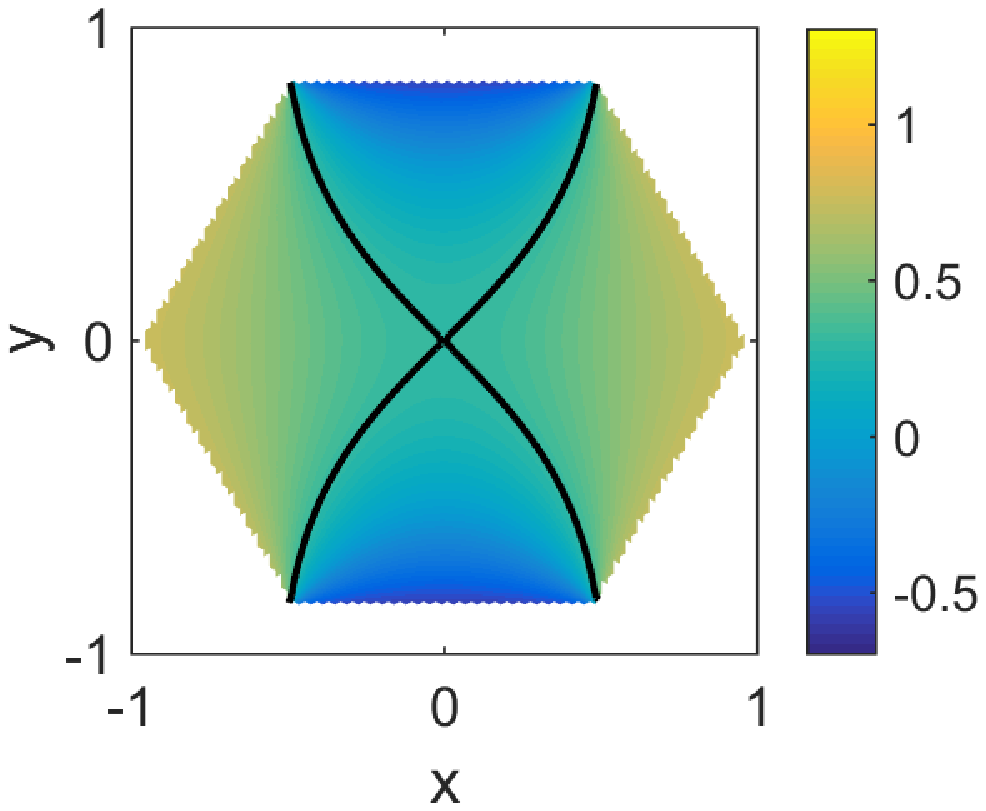}
	\caption{$Q_{11}$ and $Q_{22}$ with contours at level $\frac{B}{6C}$ for $\lambda^2=10^{-6}$ and $t=2$.}
	\label{fig:h2}
\end{figure}

\begin{figure}[h!]
	\centering
 	\includegraphics[width=5.2cm]{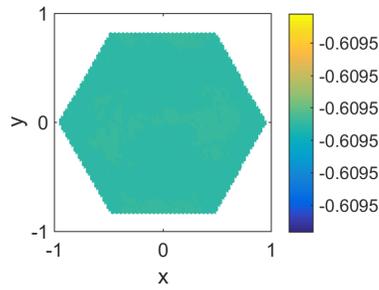}
	\caption{$Q_{33}=-Q_{11}-Q_{22}$ for $\lambda^2=10^{-6}$ and $t=2$.}
	\label{fig:h3}
\end{figure}

\begin{figure}[h!]
	\centering
 	\includegraphics[width=5cm]{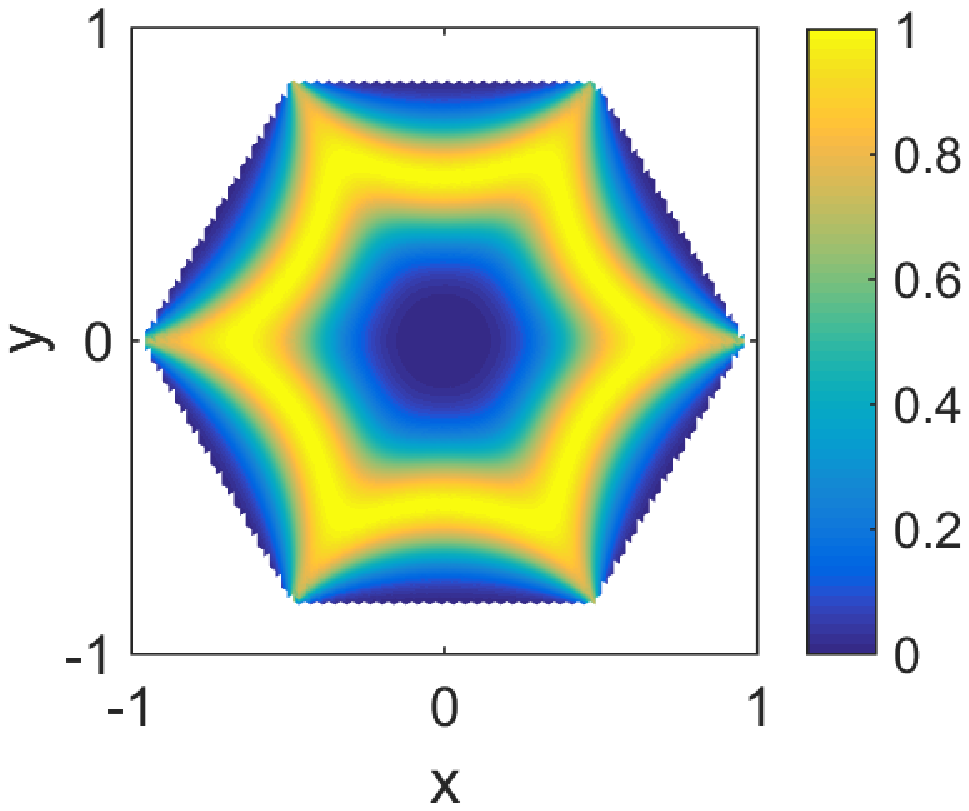}\includegraphics[width=5cm]{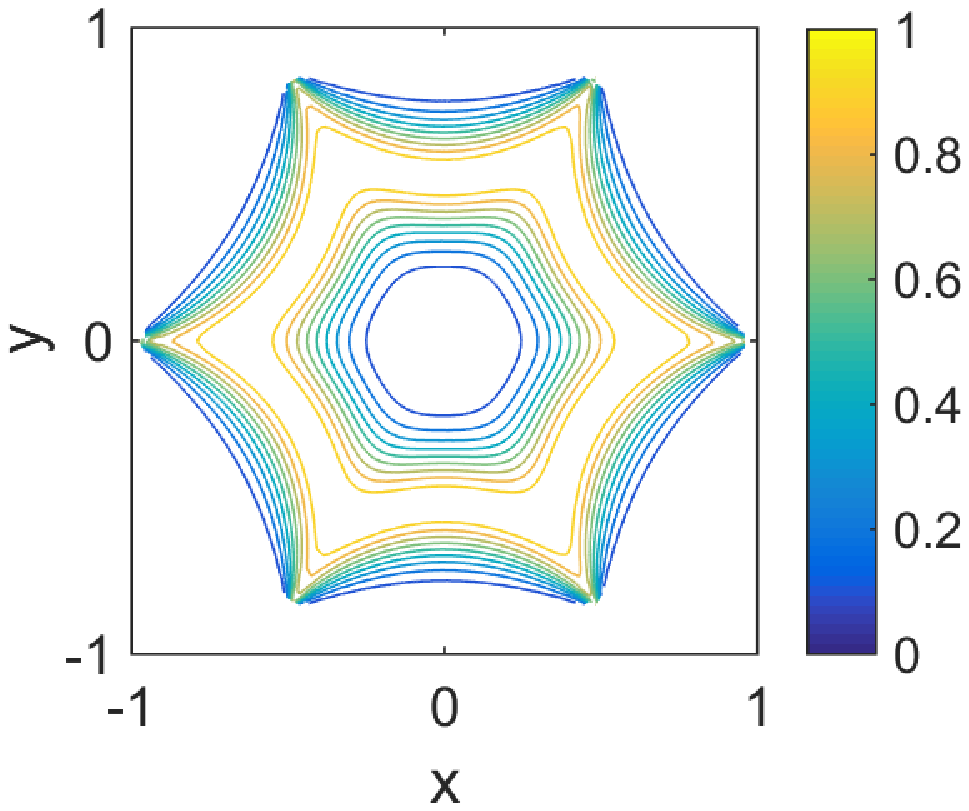}
	\caption{Plot and contour plot of biaxiality parameter $\beta^2$ for $\lambda^2=10^{-6}$ and $t=2$.}
\end{figure}

\begin{figure}[h!]
	\centering
	\includegraphics[width=6cm]{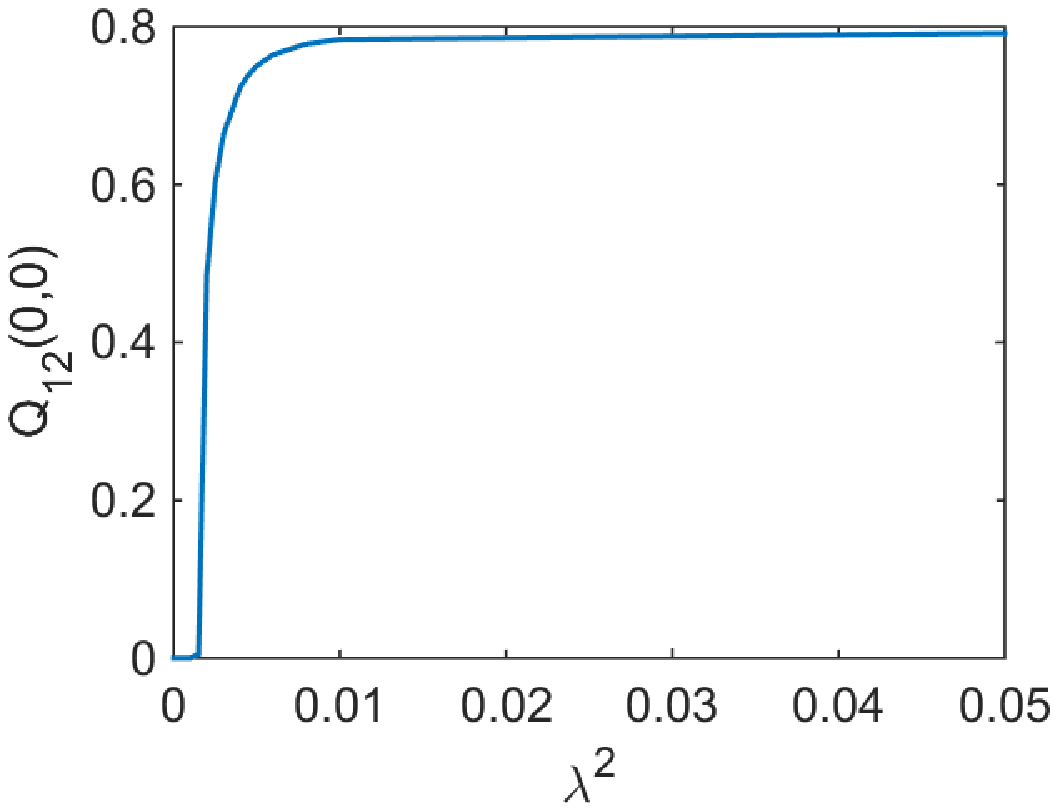}
	\caption{$Q_{12}$ at the origin for various $\lambda$. The critical value of $\lambda^2=0.002$.}
	\label{fig:h4}
\end{figure}

\begin{figure}[h!]
	\centering
 	\includegraphics[width=6cm]{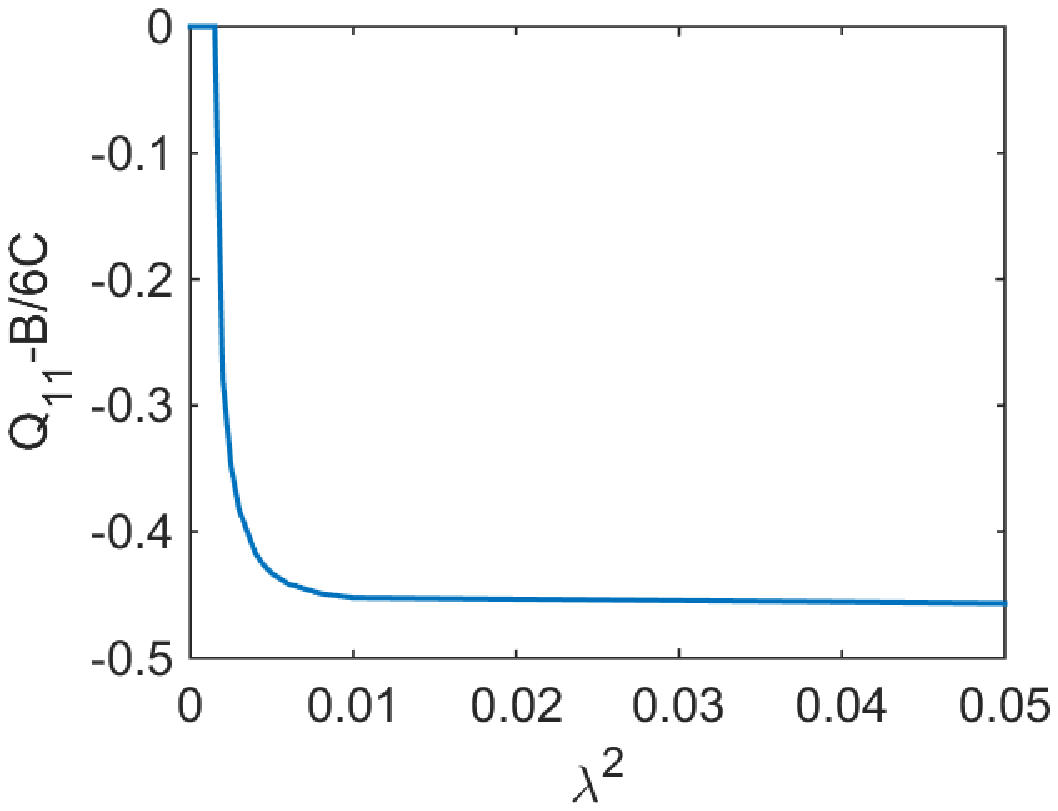}
	\caption{$Q_{11}-\frac{B}{6C}$ at the origin for various $\lambda$. The critical value of $\lambda^2=0.002$.}
	\label{fig:h5}
\end{figure}

\begin{figure}[h!]
	\centering
 	\includegraphics[width=6cm]{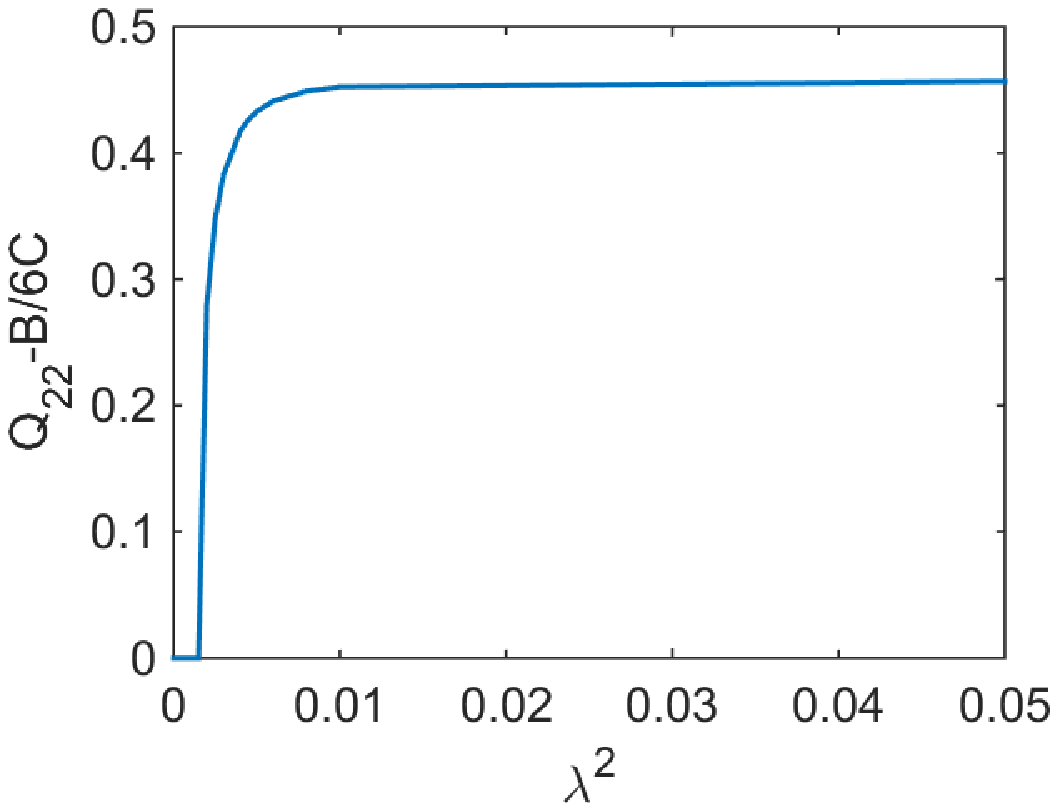}
	\caption{$Q_{22}-\frac{B}{6C}$ at the origin for various $\lambda$. The critical value of $\lambda^2=0.002$.}
	\label{fig:h6}
\end{figure}

\section{Conclusions}
\label{sec:conclusions}

We analytically and numerically study an OR-type LdG critical point on a square domain at a fixed temperature, motivated by the WORS critical point reported in \cite{kraljmajumdar}
distinguished by a uniaxial cross with negative order parameter along the square diagonals. The OR-type critical point is globally stable for edge lengths comparable to the biaxial
correlation length of the order of $\sqrt{L}{C}$. The convergence result in Proposition~\ref{prop:3} gives insight into how the diagonal cross deforms
into uniaxial transition layers of negative order parameter, along the square edges. Recent numerical experiments show that there is a continuous branch of critical points emanating 
from the OR critical point \cite{robinson} for which the uniaxial cross continuously deforms from the diagonal towards the edges; in some cases, there can be up to $81$ critical points 
for a given $\lambda$. Further, our preliminary numerical investigations on a square and a hexagon suggest that OR-type critical points are exist and are globally stable for regular
two-dimensional polygons with an even number of sides, when the side length is sufficiently small, and the OR critical points lose stability by undergoing a supercritical pitchfork 
bifurcation as the edge length increases. We will study the generic character of OR-type critical points in future work.

\section{Acknowledgements}

G.C. has received funding from the European Research Council under the European Union's Seventh Framework Programme (FP7/2007-2013) / ERC grant agreement n° 291053.
A.M. is supported by an EPSRC Career Acceleration Fellowship EP/J001686/1 and EP/J001686/2 and an OCIAM Visiting Fellowship.
A.S. is supported by an Engineering and Physical Sciences Research Council (EPSRC) studentship.


\begin{thebibliography}{10}

\bibitem{kraljmajumdar}
S.~Kralj and A.~Majumdar.
\newblock Order reconstruction patterns in nematic liquid crystal wells.
\newblock {\em Proc. R. Soc. Lond. Ser. A Math. Phys. Eng. Sci.},
  470(2169):20140276, 18, 2014.

\bibitem{dg}
P.~G. De~Gennes and J.~Prost.
\newblock {\em The {P}hysics of {L}iquid {C}rystals}.
\newblock Clarendon Press, Oxford, 1974.

\bibitem{virga}
E.~G. Virga.
\newblock {\em {V}ariational {T}heories for {L}iquid {C}rystals}, volume~8 of
  {\em Applied Mathematics and Mathematical Computation}.
\newblock Chapman \& Hall, London, 1994.

\bibitem{tsakonas}
C.~Tsakonas, A.~J. Davidson, C.~V. Brown, and N.~J. Mottram.
\newblock Multistable alignment states in nematic liquid crystal filled wells.
\newblock {\em Appl. Phys. Lett.}, 90(11), 2007.

\bibitem{cleaver}
C.~Anquetil-Deck, D.~J. Cleaver, and T.~J. Atherton.
\newblock Competing alignments of nematic liquid crystals on square-patterned
  substrates.
\newblock {\em Phys. Rev. E}, 86:041707, Oct 2012.

\bibitem{luo2012}
C.~Luo, A.~Majumdar, and R.~Erban.
\newblock Multistability in planar liquid crystal wells.
\newblock {\em Phys. Rev. E}, 85:061702, Jun 2012.

\bibitem{lewissoftmatter}
A.~Lewis, I.~Garlea, J.~Alvarado, O.~Dammone, O.~Howell, A.~Majumdar,
  B.~Mulder, M.~P. Lettinga, G.~Koenderink, and D.~Aarts.
\newblock Colloidal liquid crystals in rectangular confinement: {T}heory and
  experiment.
\newblock {\em Soft Matter}, 39(10):7865--7873, October 2014.

\bibitem{sluckin}
N.~Schopohl and T.~J. Sluckin.
\newblock Defect core structure in nematic liquid crystals.
\newblock {\em Phys. Rev. Lett.}, 59:2582--2584, 1987.

\bibitem{penzenstadler}
E.~Penzenstadler and H.-R. Trebin.
\newblock Fine structure of point defects and soliton decay in nematic liquid
  crystals.
\newblock {\em J. Phys. (Paris)}, 50(9):1989, 1027--1040.

\bibitem{gartlandmkaddem}
S.~Mkaddem and E.~C. Gartland.
\newblock Fine structure of defects in radial nematic droplets.
\newblock {\em Phys. Rev. E}, 62:6694--6705, Nov 2000.

\bibitem{kelly}
P.~Palffy-Muhoray, E.~C. Gartland, and J.-R. Kelly.
\newblock A new configurational transition in inhomogeneous nematics.
\newblock {\em Liq. Cryst.}, 16(4):713--718, 1994.

\bibitem{bisi2003}
F.~Bisi, E.~C. Gartland, R.~Rosso, and E.~G. Virga.
\newblock Order reconstruction in frustrated nematic twist cells.
\newblock {\em Phys. Rev. E}, 68:021707, Aug 2003.

\bibitem{bisi2004}
F.~Bisi, E.~G. Virga, and G.~E. Durand.
\newblock Nanomechanics of order reconstruction in nematic liquid crystals.
\newblock {\em Phys. Rev. E}, 70:042701, Oct 2004.

\bibitem{lamy2014}
X.~Lamy.
\newblock Bifurcation analysis in a frustrated nematic cell.
\newblock {\em J. Nonlinear Sci.}, 24(6):1197--1230, 2014.

\bibitem{fife}
H.~Dang, P.~C. Fife, and L.~A. Peletier.
\newblock Saddle solutions of the bistable diffusion equation.
\newblock {\em Z. Angew. Math. Phys.}, 43(6):984--998, 1992.

\bibitem{schatzman}
M.~Schatzman.
\newblock On the stability of the saddle solution of {A}llen-{C}ahn's equation.
\newblock {\em Proc. Roy. Soc. Edinburgh Sect. A}, 125(6):1241--1275, 1995.

\bibitem{robinson}
M.~Robinson, C.~Luo, A.~Majumdar, and R.~Radek~Erban.
\newblock {Front Propagation at the Nematic-Isotropic Transition Temperature}.
\newblock In preparation, 2016.

\bibitem{palais}
R.~S. Palais.
\newblock The principle of symmetric criticality.
\newblock {\em Comm. Math. Phys.}, 69(1):19--30, 1979.

\bibitem{newtonmottram}
N.~J. Mottram and C.~Newton.
\newblock Introduction to {Q}-tensor theory.
\newblock Technical Report~10, Department of Mathematics, University of
  Strathclyde, 2004.

\bibitem{ejam2010}
A.~Majumdar.
\newblock Equilibrium order parameters of nematic liquid crystals in the
  {L}andau-de {G}en\-nes theory.
\newblock {\em Eur. J. Appl. Math.}, 21(2):181--203, 2010.

\bibitem{evans}
L.~C. Evans.
\newblock {\em {P}artial {D}ifferential {E}quations}, volume~19 of {\em
  Graduate Studies in Mathematics}.
\newblock American Mathematical Society, Providence, RI, second edition, 2010.

\bibitem{amaz}
A.~Majumdar and A.~Zarnescu.
\newblock Landau-{D}e {G}ennes theory of nematic liquid crystals: the
  {O}seen-{F}rank limit and beyond.
\newblock {\em Arch. Ration. Mech. Anal.}, 196(1):227--280, 2010.

\bibitem{ModicaMortola}
L.~Modica and S.~Mortola.
\newblock Il limite nella {$\Gamma $}-convergenza di una famiglia di funzionali
  ellittici.
\newblock {\em Boll. Un. Mat. Ital. A (5)}, 14(3):526--529, 1977.

\bibitem{Sternberg}
P.~Sternberg.
\newblock The effect of a singular perturbation on nonconvex variational
  problems.
\newblock {\em Arch. Rational Mech. Anal.}, 101(3):209--260, 1988.

\bibitem{Braides}
A.~Braides.
\newblock A handbook of $\gamma$-convergence.
\newblock volume~3 of {\em Handbook of Differential Equations: Stationary
  Partial Differential Equations}, pages 101--213. North-Holland, 2006.

\bibitem{Grisvard}
P.~Grisvard.
\newblock {\em Elliptic problems in nonsmooth domains}, volume~24 of {\em
  Monographs and Studies in Mathematics}.
\newblock Pitman (Advanced Publishing Program), Boston, MA, 1985.

\bibitem{GilbargTrudinger}
D.~Gilbarg and N.~S. Trudinger.
\newblock {\em Elliptic partial differential equations of second order}.
\newblock Classics in Mathematics. Springer-Verlag, Berlin, 2001.
\newblock Reprint of the 1998 edition.

\bibitem{BBH-degreezero}
F.~Bethuel, H.~Brezis, and F.~H{\'e}lein.
\newblock Asymptotics for the minimization of a {G}inzburg-{L}andau functional.
\newblock {\em Calc. Var. Partial Dif.}, 1(2):123--148, 1993.

\bibitem{CrandallRabinowitz}
M.~G. Crandall and P.~H. Rabinowitz.
\newblock Bifurcation from simple eigenvalues.
\newblock {\em J. Functional Analysis}, 8:321--340, 1971.

\bibitem{CrandallRabinowitz-ARMA}
M.~G. Crandall and P.~H. Rabinowitz.
\newblock Bifurcation, perturbation of simple eigenvalues and linearized
  stability.
\newblock {\em Arch. Rational Mech. Anal.}, 52:161--180, 1973.

\bibitem{gradientflow}
M.~A. Peletier.
\newblock Energies, gradient flows, and large deviations: a modelling point of
  view.
\newblock 2011.

\bibitem{front2016}
A.~Majumdar, P.~A. Milewski, and A.~Spicer.
\newblock {Front Propagation at the Nematic-Isotropic Transition Temperature}.
\newblock Preprint ar{X}iv: 1505.06143, 2016.

\end{thebibliography}
\end{document}